\documentclass[aap]{imsart}
\usepackage{newtxtext}
\usepackage{fix-cm}
\usepackage{latexsym,upgreek}
\usepackage{amsfonts}
\usepackage{amsmath}
\usepackage{amsthm}
\usepackage{graphicx} 
\usepackage{amssymb}
\usepackage{stmaryrd}
\usepackage{exscale}
\usepackage[square,numbers]{natbib}
\usepackage{bm}
\usepackage{bbm}
\usepackage{commath}
\usepackage{lipsum}
\usepackage{float} 
\usepackage{enumitem}
\usepackage{comment} 
\usepackage{cancel}
 \usepackage{tocloft}

\graphicspath{{Fig/}}

\RequirePackage{amsthm,amsmath,amsfonts,amssymb}
\RequirePackage[colorlinks,citecolor=blue,urlcolor=blue]{hyperref}
\RequirePackage{graphicx}

\startlocaldefs

\newcommand{\mB}{{\mathcal B}}
\newcommand{\mP}{{\mathcal P}}
\newcommand{\mK}{{\mathcal K}}
\newcommand{\mC}{{\mathcal C}}
\newcommand{\mD}{{\mathcal D}}
\newcommand{\mL}{{\mathcal L}}

\newcommand{\mG}{{\mathcal G}}
\newcommand{\mF}{{\mathcal F}}

\newcommand{\mX}{{\mathcal X}}
\newcommand{\mY}{{\mathcal Y}}
\newcommand{\mN}{{\mathcal N}}
\newcommand{\mM}{{\mathcal M}}

\newcommand{\mbM}{\mathbb{M}}
\newcommand{\mbN}{\mathbb{N}}

\newcommand{\mbP}{\mathbb{P}}
\newcommand{\mbR}{\mathbb{R}}
 
\newcommand{\mbE}{\mathbb{E}} 

\newcommand{\sd}{{\sf d}}

\newcommand{\beq}{\begin{equation}}
\newcommand{\eeq}{\end{equation}}
\newcommand{\beqa}{\begin{eqnarray}}
\newcommand{\eeqa}{\end{eqnarray}}

\usepackage[most]{tcolorbox}

\newtcolorbox{codexbox}{
  breakable,
  colback=orange!10,
  colframe=orange!30!red,
  boxrule=0.4pt,
  arc=1mm,
  left=1mm,
  right=1mm,
  top=1mm,
  bottom=1mm,
  fonttitle=\bfseries,
  title=codex
}

\newtcolorbox{denizbox}{
  breakable,
  colback=red!10,
  colframe=red!60!black,
  boxrule=0.4pt,
  arc=1mm,
  left=1mm,
  right=1mm,
  top=1mm,
  bottom=1mm,
  fonttitle=\bfseries,
  title=\"{o}da
}

\theoremstyle{plain} 

\newtheorem{theorem}{Theorem}[section]
\newtheorem{lemma}[theorem]{Lemma}

\newtheorem{definition}[theorem]{Definition}
\newtheorem{proposition}[theorem]{Proposition}
\newtheorem{corollary}[theorem]{Corollary}

\newtheorem{remark}[theorem]{Remark}

\endlocaldefs

\begin{document}

\begin{frontmatter}
\title{An Operator-Theoretic Analysis of Nonlinear Filtering under Model Misspecification}
\runtitle{Nonlinear filtering under model misspecification}

\begin{aug}
\author[A]{\fnms{Fabián}~\snm{González} \thanks{[\textbf{Corresponding author.}]}\ead[label=e1]{omgonzal@math.uc3m.es}\orcid{0000-0003-4353-5737}}
\author[B]{\fnms{\"Omer Deniz}~\snm{Akyildiz}\ead[label=e2]{deniz.akyildiz@imperial.ac.uk}\orcid{0000-0002-5248-1219}}
\author[B]{\fnms{Dan}~\snm{Crisan}\ead[label=e3]{d.crisan@imperial.ac.uk}\orcid{0000-0003-1486-1517}}
\author[A]{\fnms{Joaquín}~\snm{Míguez}\ead[label=e4]{joaquin.miguez@uc3m.es}\orcid{0000-0001-5227-7253}}
\address[A]{Department of Signal Theory and Communications, Universidad Carlos III de Madrid, Spain\printead[presep={,\ } \\]{e1,e4}}
\address[B]{Department of Mathematics, Imperial College London, 180
Queen’s Gate, London SW7 2AZ, United Kingdom\printead[presep={,\ }]{e2,e3}}
\end{aug}

\begin{abstract}
We study Bayesian filtering under misspecified model dynamics. The optimal filter is governed by predictive operators $K=\{K_t\}_{t\ge 1}$ associated with the true dynamics, while the approximate filter uses perturbed operators $\widehat K=\{\widehat K_t\}_{t\ge 1}$. Both filters share the same update operators $U=\{U_t\}_{t\ge 1}$ induced by the observation model.
 We work in a fully probabilistic setting in which observations are random variables, rather than fixed realizations. This viewpoint yields a linear recursion for the mean measures of the optimal filter and makes the propagation of filtering errors depend on two ingredients: the contraction of the predictive operators, and the accumulated model discrepancy. Within this framework, we derive explicit bounds for the discrepancy between the optimal and misspecified filters. The bounds separate the effect of the initial-condition error from the perturbation introduced by the dynamical mismatch $\widehat K-K$. Under suitable contractivity assumptions, the cumulative error stabilizes at a steady-state level determined by the size of the misspecification and by the Dobrushin coefficients of the predictive operators.
We also give analytical examples in which the assumptions can be verified directly. In particular, if the transition functions are bounded and the transition noise belongs to a family of elliptically symmetric distributions, including Gaussian, Student-$t$ and Laplace laws, then the filter error stabilizes as predicted by the theory. The results provide a theoretical basis for quantifying the robustness of nonlinear filters under random observations and misspecified dynamics.
\end{abstract}

\begin{keyword}[class=MSC]
\kwd[Primary ]{62M20}
\kwd{62F15}
\kwd{60B05}
\kwd[; secondary ]{46N30}
\kwd{60G46}
\end{keyword}

\begin{keyword}
\kwd{Bayesian filtering}
\kwd{Misspecified dynamics}
\kwd{Random measures}
\kwd{Stability}
\end{keyword}

\end{frontmatter}

%
\section{Introduction}

\subsection{State-space models and optimal filtering}

State-space models (SSMs) \cite{crisan2020stable} provide a versatile framework for describing partially observed dynamical systems evolving in discrete time. These models consist of an unobserved Markovian state sequence \(X = \{X_t\}_{t \geq 0}\) and an observation sequence \(Y = \{Y_t\}_{t \geq 1}\). A SSM, denoted $\mM=(\pi_0,K,g)$, is fully characterized by an initial distribution \(\pi_0\), a sequence of transition Markov kernels
\(
K=\{K_t\}_{t\geq1}
\)
governing the state dynamics, and a family of likelihood functions
\(
g=\{g_t\}_{t\geq1}
\)
relating observations to the underlying states. We assume that the observations are conditionally independent given the corresponding states.

For a sequence of observations \(Y_{1:t}=y_{1:t}\), the goal of Bayesian filtering \cite{Bain09} is to compute the conditional distribution \(\pi_t\) of the state \(X_t\) given \(Y_{1:t}=y_{1:t}\), known as the \emph{optimal filter}. This corresponds to a classical pathwise formulation, where the observation sequence is regarded as fixed \cite{Bain09,DelMoral96a}. In contrast, one can regard \(Y_{1:t}\) as a random process generated by the model and analyze the average behavior of the filtering procedure. This approach has been employed in \cite{mcdonald2020exponential} for the study of stability of the optimal filter \cite{del1999stability}. In this random-observation setting, several objects in the filtering problem become random. In particular, the optimal filters
\(
\{\pi_t\}_{t\geq1}
\)
form a sequence of random measures (see \cite{baccelli2024random}) and we aim at the study of the expectation of these measures with respect to (w.r.t.) the law of $Y_{1:t}$. From this perspective, the focus is on the typical behavior of the filtering process rather than the worst-case behavior associated with a fixed observation sequence.

To carry out this analysis, we reformulate the prediction and update steps through families of operators induced by the transition kernels and likelihood functions. In particular, the filtering process is characterized by the composition of two operators: the predictive operators
\(
K=\{K_t\}_{t\geq1}
\),
which propagate probability measures according to the dynamics induced by the transition kernels, and the update operators
\(
U=\{U_t\}_{t\geq1}
\),
induced by the likelihoods $\{g_t\}_{t\ge 1}$, which incorporate observational information through Bayes' rule. Throughout the paper, we use the notation \(K_t\) interchangeably to denote both the transition kernel and the associated predictive operator. This abuse of notation is justified by the fact that each transition kernel naturally induces a linear operator through its action on linear spaces of finite signed measures \cite{folland1999real}.

\subsection{Model misspecification}

In many practical applications, the assumed SSM fails to accurately represent the underlying dynamics, leading to \emph{model misspecification} \cite{calvet2015robust}. Such discrepancies may arise for several reasons. For instance, the corresponding discrete-time SSM can be obtained through the discretization of differential equations, a procedure that introduces approximation errors. Additional sources of mismatch include incomplete physical modeling, uncertain parameters, unresolved scales, etc. As a consequence, some degree of misspecification is often unavoidable in practical implementations.

From a probabilistic perspective, model misspecification occurs when the chosen prior $\pi_0$, the transition kernels
\(
\{K_t\}_{t\geq1},
\)
the likelihood functions
\(
\{g_t\}_{t\geq1},
\)
or any combination thereof does not accurately represent the statistical properties of the real-world system. Such discrepancies can significantly degrade filter performance, making the analysis of how these errors propagate over time a central problem, both theoretically and in applications.

Model misspecification has been addressed through several distinct approaches. Outlier detection and robust filtering \cite{blazquez2021review, truzman2024outlier} aim to manage observations that are in poor agreement with the assumed SSM. However, these approaches often treat anomalous data as uninformative noise, potentially discarding relevant information that could otherwise help correct the misspecified dynamics. Alternatively, parameter estimation methods, including particle Markov chain Monte Carlo \cite{Andrieu10}, sequential Monte Carlo square \cite{Chopin12} or the nested particle filters \cite{Crisan18bernoulli} calibrate a parametric family of models to the observed data. These methods are fundamentally limited by the flexibility of the chosen parametric class: if the true dynamics lie outside the assumed family, the filter remains misspecified regardless of the parameter choice.

A more direct approach to mitigating dynamical mismatch is nudging \cite{reich2015probabilistic}, which modifies the state dynamics by incorporating observational information directly into the evolution of the system. Intuitively, these methods steer the dynamics toward regions of the state space favored by the observations. Nudging techniques are widely used in the data assimilation community \cite{Law15,akyildiz2020nudging} and the recent reference \cite{gonzalez2025nudging} provides a theoretical justification data-driven correction of the dynamics by showing that they can improve the model--data fit in a quantifiable manner.

In the present work, we assume that the observation mechanism is correctly specified and focus exclusively on the effect of misspecified dynamics. This assumption is natural in settings where the observation device (e.g. a sensor or an antenna) is known or can be calibrated with sufficient accuracy, while the state dynamics are obtained from an approximate physical, numerical, or statistical model.
Unlike the approaches discussed above, we do not attempt to correct or mitigate the misspecification. Instead, we aim to analyze how this misspecifications translates into errors in the filtering process.

\subsection{Stability}

A central concept in filtering theory is \emph{stability} \cite{ocone1996asymptotic,del1999stability,bhatt2000markov,DelMoral04,chigansky2006stability,crisan2008stability,chigansky2009intrinsic,del2015uniform,crisan2020stable}, defined as the ability of the filter to asymptotically forget its initial condition. A filter is stable if the influence of perturbations in the initial distribution $\pi_0$ vanishes as $t\to\infty$.

Stability has fundamental practical implications. Except for a few special cases, the optimal filters \(\{\pi_t\}_{t \ge 0}\) are typically analytically intractable. As a consequence, practical implementations rely on numerical approximation methods such as particle filters. In this context, stability becomes essential, as it guarantees that approximation errors do not accumulate indefinitely over time, thereby ensuring the reliability of long-term state estimation.

Historically, the analysis of stability for nonlinear filters has relied on strong mixing assumptions imposed on the transition kernels \cite{chigansky2006stability,chigansky2009intrinsic} which are often restrictive and difficult to verify in concrete applications. The assumption that the observations $Y_{1:t}$ are random provides an alternative framework for stability analysis, where the forgetting properties of the filtering process can be characterized quite directly using the contration coefficients of the predictive operators \(K\) and \(\widehat K\).

\subsection{Problem statement}

We assume the existence of an underlying \emph{true} state-space model
\(
\mM=(\pi_0,K,g),
\)
which determines the probabilistic structure of the state and observation processes. In parallel, we consider a misspecified model
\(
\widehat{\mM}
=
(\widehat{\pi}_0,\widehat K,g),
\) where both $\widehat \pi_0 \neq \pi_0$ and 
\(
\widehat K \neq K,
\)
while the observation mechanism
\(
g=\{g_t\}_{t\geq1}
\)
is assumed to be correctly specified. The discrepancies between \(K\) and \(\widehat K\) shape the error in the system dynamics, while the mismatch between \(\pi_0\) and \(\widehat\pi_0\) accounts for the error in the initial condition. These two models generate, respectively, the optimal filters
\(
\{\pi_t\}_{t\geq1}
\)
and the misspecified filters
\(
\{\widehat\pi_t\}_{t\geq1}.
\)
Throughout the paper, the observations $Y_t$ are treated as random variables (r.v.s) rather than fixed realizations. To analyze the propagation of filtering errors $| \pi_t - \widehat \pi_t|$, we aim at characterizing how the total variation error
\(
\|\pi_t-\widehat\pi_t\|_{\mathrm{TV}}
\)
propagates over time, and to determine conditions under which this error remains small.

\subsection{Contributions}

We begin with the derivation of a linear recursion for the mean measures associated with the optimal filter. More precisely, we obtain a linear recursion for the deterministic measures \(M_{\pi_t}:=\mbE[\pi_t]\),  where the expectation is taken w.r.t. the law of $Y_{1:t}$, and the recursion is driven solely by the predictive operators. This makes it possible to analyze stability directly through the contraction coefficients of the kernels $K_t$ and also provide a probabilistic characterization of stability in expectation.

Building on this formulation, we study the propagation of filtering errors under misspecification in both the initial distribution and the state dynamics where we compare the optimal filters
\(
\{\pi_t\}_{t\geq0}
\)
associated with the true model \(\mM=(\pi_0,K,g)\) to the misspecified filters
\(
\{\widehat\pi_t\}_{t\geq0}
\)
induced by the misspecified model \(\widehat{\mM}=(\widehat \pi_0,\widehat K, g)\) \cite{chiganskyvanHandel}
. Our analysis focuses on the expected total variation error
\(
\mathbb E[\|\pi_t-\widehat\pi_t\|_{TV}].
\) We obtain explicit quantitative bounds under two complementary settings: one based on discrepancies measured uniformly over all probability measures and another based on discrepancies evaluated only along the sequence $\pi_0, \ldots, \pi_t, \ldots$ of optimal filters.

Under suitable contraction assumptions on $K$ and $\widehat K$, we show that the filtering error can be decomposed into two contributions: the error induced by the misspecified initial condition and the error induced by the discrepancy between the true and misspecified dynamics. More precisely, we obtain bounds of the form
\[
\mathbb E\!\left[
\|\pi_t-\widehat\pi_t\|_{\mathrm{TV}}
\right]
\leq
\|\pi_0-\widehat\pi_0\|_{\mathrm{TV}}\,  e^{-(1-\mathsf C)t}
+
\epsilon\,
\frac{1-\mathsf C^{t}}
     {1-\mathsf C},
\]
where \(\mathsf C\in[0,1)\) is a constant depending on the contraction properties of $K$, $\widehat K$ and \(\epsilon\) measures the discrepancy between the transition dynamics $K_t$ and $\widehat K_t$. Consequently, for large \(t\), the filtering error remains below a constant
proportional to the magnitude of the model misspecification. Indeed, whenever
\(\mathsf C+\epsilon\leq 1\), we obtain
\[
\limsup_{t\to\infty}
\mathbb E\!\left[
\|\pi_t-\widehat\pi_t\|_{\mathrm{TV}}
\right]
\leq
\frac{\epsilon}{1-\mathsf C}.
\]
which provides an asymptotic error bound.

We also investigate a complementary regime in which the discrepancy between the true and misspecified dynamics is allowed to vary with time and decay asymptotically. We show that if the resulting perturbations are summable, either uniformly over all probability measures or along the sequence of optimal filters, then the cumulative effect of model misspecification remains finite and the contractive dynamics dominate asymptotically. Consequently, the optimal and misspecified filters coalesce almost surely, $\lim_{t\to\infty} \| \pi_t - \widehat \pi_t\|_{{\rm TV}} = 0$.

Finally, we identify a broad class of state-space models for which the proposed stability conditions can be verified analytically. In particular, we prove that when the state-transition function is bounded and the transition noise belongs to a family of elliptically symmetric distributions, (including Gaussian, Laplace, Student-\(t\) and other models) the assumptions underlying our analysis are satisfied. Numerical illustrations supporting the theoretical findings are provided in the appendices.

\subsection{Outline of the Paper}
The paper is organized as follows. Section~\ref{sec2} formally introduces SSMs and optimal filters, both in the classical formulation and in an operator-theoretic framework with random observations. We also establish a linear recursion for the mean optimal filter $M_{\pi_t}=\mbE[\pi_t]$.
In Section~\ref{sec3}, we analyze the stability of the optimal filter within this random observation framework. Section~\ref{sec4} is devoted to the introduction of the misspecified model and its associated operators, as well as to the study of the propagation of TV errors over time. Section~\ref{sec5} presents analytical examples illustrating the applicability of our results. Finally, Section~\ref{sec6} summarizes the main findings of the work.
Technical proofs and some additional results are provided in the appendices. We conclude this introduction with a brief summary of the notation used throughout the paper.


\subsection{Summary of notation} \label{ssNotation}
\begin{itemize}
    \item Sets, measures and integrals:
    \begin{itemize}
        \item[-] $\mB (S)$ is the $\sigma$-algebra of Borel subsets of $S \subseteq  \mbR^{d}$.
\item[-] The set
\(
\mathcal P(S)
:=
\left\{
\nu:\mathcal B(S)\to[0,1]
\;:\;
\nu \text{ is $\sigma$-additive and }
\nu(S)=1
\right\}
\)
denotes the collection of all probability measures on the measurable space
\((S,\mathcal B(S))\).
\item[-] $\mbM(S):=\{ m : \mB (S) \to  \mbR, \,$ $m$ \text{ is $\sigma$-additive and } $\, \abs{m} (S) < \infty \}$  is the linear space of totally finite signed measures over
$(S,\mB (S))$.

\item[-] $ m (f) := 
\int f(s) m(\sd s)$  is the integral of a Borel measurable function $f : S \to  \mbR$ with respect to the signed
measure $m  \in  \mbM(S)$.

\end{itemize}

\item Functions and sequences: 
    \begin{itemize}
    
    \item [-] Consider the measurable spaces \((S, \mathcal{B}(S))\) and \((\mbR, \mathcal{B}(\mbR))\).  
We denote by \(B(S)\) the space of bounded, real-valued, measurable functions \(f : S \to \mbR\).  
For any \(f \in B(S)\), the uniform norm is defined as
\[
\|f\|_{\infty} := \sup_{s \in S} |f(s)| < \infty.
\]
\item [-] We use a subscript notation for subsequences, namely $x_{t_1:t_n}:=\{x_{t_1},...,x_{t_n}\}.$
    \end{itemize}
    
\item Real random variables on a probability space $(\Omega,\mF,\mbP)$ are denoted by capital letters (e.g., $Z:\Omega \to \mbR^{d})$, while their realisations are written as lowercase letters (e.g., $Z(\omega)=z$, or simply, $Z=z$). If $X$ is a $d_x$ multivariate Gaussian random variable, then its probability law is denoted $\mN(\sd x;\mu, \Sigma)$, and it associated pdf by $\mN( x;\mu, \Sigma)$ where $\mu$ is the mean and $\Sigma$ is the covariance matrix. If the r.v. $X$ has probability law $\pi$ and $f$ is a $\pi$-integrable function, then we denote $$\pi(f) = \int f(x) \pi(\sd x) = \mbE^\pi[f(X)],$$
where $\mbE^\pi[\cdot]$ is the expectation operator with respect to the probability measure $\pi$.

\item Functional analysis: For $1\leq p < \infty$, and $\nu \in \mP(S),$ we denote by $
L^p(\nu)$ the Banach space defined by 
$$
L^p(\nu)= \left\{\,(f\colon S \to \mbR) : \|f\|_{L^p(\nu)} := \left(\int_S |f(s)|^p\, \nu(\sd s)\right)^{1/p} < \infty \right\}.
$$

\item Modes of convergence: Let \(\{Z_t\}_{t\geq0}\) be a sequence of r.v.s. We write
\(
Z_t \overset{\mathbb P}{\longrightarrow} Z
\)
for convergence in probability,
\(
Z_t \xrightarrow{\mathrm{a.s.}} Z
\)
for almost sure convergence and
\(
Z_t \xrightarrow{L^p} Z,
\; p\geq1,
\)
for convergence in \(L^p(\mathbb P)\).

\item Linear algebra: Let $A \in \mbR^{n \times n}$ be a real symmetric matrix. The spectrum of $A$ is denoted as 
\[
\mathrm{spec}(A) := \{ \lambda \in \mbR \, : \, \exists\, v \in \mbR^n \setminus \{0\} \text{ such that } Av = \lambda v \}.
\]
\item We denote the real eigenvalues of $A$ as
$\lambda_{\max}(A) \geq 
\cdots \geq \lambda_{\min}(A),$
where $\lambda_{\max}(A)$ and $\lambda_{\min}(A)$ denote the maximum and minimum eigenvalues of $A$, respectively.

\end{itemize}

%
\section{Filters with random observations}\label{sec2}
In this section, we introduce the random observation formulation of SSMs, which serves as the basis for the analysis developed throughout the paper. In contrast to the classical setting, where observations are treated as fixed realizations, we consider the observation process as a stochastic object defined on the same probability space as the state. This perspective allows us to study the filtering recursion as a sequence of random probability measures and to exploit the probabilistic structure of the observations in the analysis of this sequence. In particular, it enables us to characterize the evolution of the filter in expectation and to relate key properties, such as stability and error propagation under model misspecification, to the contraction of the underlying operators.

\subsection{Model}
Let $(\Omega, \mathcal F, \mathbb P)$ be a probability space. We consider two discrete-time stochastic processes indexed by $t \in \mathbb N$:
\begin{itemize}
    \item the \emph{state} (or \emph{signal}) process $X = \{X_t\}_{t \geq 0}$ taking values in $\mathcal X \subseteq \mathbb R^{d_x}$;
    \item the \emph{observation} process $Y = \{Y_t\}_{t \geq 1}$ taking values in $\mathcal Y \subseteq \mathbb R^{d_y}$.
\end{itemize}
We refer to $\mathcal X$ and $\mathcal Y$ as the state space and the observation space, respectively. The initial state $X_0$ is distributed according to the prior probability law
\begin{equation*}
    \pi_0(\mathrm dx) := \mathbb P(X_0 \in \mathrm dx).
\end{equation*}
After time $t=0,$ the evolution of the state process is governed by a sequence of Markov transition kernels $\{K_t\}_{t \geq 1}$, defined as
\[
K_t(x_{t-1}, A) := \mathbb P(X_t \in A \mid X_{t-1} = x_{t-1}),
\qquad A \in \mathcal B(\mathcal X),
\]
where $\mathcal B(\mathcal X)$ denotes the Borel $\sigma$-algebra on $\mathcal X$.
Conditionally on $X_t = x_t$, the observation $Y_t$ is assumed to admit a density $g_t(y_t \mid x_t)$ w.r.t. a reference measure on $\mathcal Y$ (typically the Lebesgue measure). For each fixed observation $y_t \in \mathcal Y$, we write
\[
g_{y_t}(x_t) := g_t(y_t \mid x_t),
\]
thus viewing $g_{y_t}(\cdot)$ as a function of the state variable (i.e., a likelohood). The observations are assumed to be conditionally independent given the state process.

The triplet $\mathcal M := (\pi_0, K, g)$, where $K = \{K_t\}_{t \geq 1}$ and $g = \{g_t\}_{t \geq 1}$, defines a discrete-time SSM \cite{crisan2020stable}. Given a realization of the observation sequence  $Y_{1:t} = y_{1:t}$, the deterministic filtering distribution at time $t$ is
\[
\pi_t(\mathrm dx) := \mathbb P(X_t \in \mathrm dx \mid Y_{1:t} = y_{1:t}), \qquad t \geq 1,
\]
and the sequence $\{\pi_t\}_{t \geq 1}$ satisfies a classical prediction--update recursion \cite{Bain09}. 
\begin{itemize}
\item   
The one-step-ahead predictive law of $X_t$ is
\[
\xi_t(\mathrm dx)
:= \mathbb P(X_t \in \mathrm dx \mid Y_{1:t-1} = y_{1:t-1})
= \int_{\mathcal X} K_t(x', \mathrm dx)\,\pi_{t-1}(\mathrm dx'),
\]
which we also write compactly as $\xi_t = K_t(\pi_{t-1})$.

\item Given a newly collected observation $Y_t=y_t$, the filtering measure \cite{stone2023introduction,Robert04} at time $t$ is 
\[\pi_t(\sd x)= \frac{g_{y_t}(x) \xi_t(\sd x)}{\xi_t(g_{y_t})}\]
and the posterior integral of a test function $f$ is computed as
\begin{equation*}
    \pi_t(f)
    = \frac{\xi_t(f g_{y_t})}{\xi_t(g_{y_t})}, \quad \text{where} \quad   \xi_t(g_{y_t}) := \int_{\mathcal X} g_{y_t}(x)\,\xi_t(\mathrm dx).
\end{equation*}
\end{itemize}

\begin{remark}
We assume that the likelihood functions
\(
g_t : \mathcal Y \times \mathcal X \to (0,\infty)
\)
are strictly positive and normalized w.r.t. the Lebesgue measure, i.e. $g_t>0$ and
\begin{equation}\label{eq:g_normalized}
    \int_{\mathcal Y} g_t(\mathrm{y} \mid x_t)\,\sd \mathrm{y} = 1,
    \qquad \forall x_t \in \mathcal X,\;\; t \in \mathbb N.
\end{equation}
Under this assumption, $g_t(y \mid x_t)$ coincides with the conditional pdf of $Y_t$ given $X_t = x_t$.
\end{remark}

\subsection{Random observations}
Hereafter, let us assume that the observation sequence $\{Y_t\}_{t \geq 1}$ is random and generated by the model itself,
\beq
\begin{array}{llll}
    Y_t: &\Omega &\to &\mY\\
    &\omega &\mapsto &Y_t(\omega). \notag
\end{array}
\eeq
For each $t$, we define the random function
\(g_{Y_t}(x_t):=g_t(Y_t \mid x_t),\)
which is itself a r.v. for every $x_t.$ Within this framework, the filtering distributions are random probability measures (see \cite{baccelli2024random}). The random filter at time $t$ becomes
\begin{equation}\label{eq:random_filter}
\pi_{Y_{1:t}}(\sd x)
= \frac{
g_{Y_t}(x)\xi_{Y_{1:t-1}}(\sd x)
}{
\xi_{Y_{1:t-1}}(g_{Y_t})
},
\end{equation}
where the predictive measure is
\begin{equation*}
\xi_{Y_{1:t-1}}
= K_t(\pi_{Y_{1:t-1}}).
\end{equation*}
We refer to \(\{\pi_{Y_{1:t}}\}_{t \geq 1}\) as the sequence of random optimal filters \cite{Bain09}, where the conditional law is computed under the true model $\mM$. Both \(\pi_{Y_{1:t}}\) and \(\xi_{Y_{1:t}}\) are now random measures and, for a test function \(f:\mathcal X \to \mathbb R\), the quantity
\[
\pi_{Y_{1:t}}(f)
=
\frac{\xi_{Y_{1:t-1}}(f g_{Y_t})}
{\xi_{Y_{1:t-1}}(g_{Y_t})}
\]
is a random variable.

Since our analysis is carried out in a random-observation setting, it becomes necessary to explicitly characterize the probabilistic structure induced by the observation process itself. In particular, we require an explicit representation of the conditional law of \(Y_t\) given the past observations \(Y_{1:t-1}\). The following remark shows that the normalization constant appearing in the Bayes update coincides with this conditional density. This identification will play a fundamental role throughout the paper.
\begin{remark}\label{remark:likelihood_identity}
For each fixed observation sequence \(Y_{1:t-1}=y_{1:t-1}\), the function
\(
\mathrm{y_t}
\mapsto
\xi_{y_{1:t-1}}(g_{\mathrm{y_t}})
\)
coincides with the conditional pdf of \(Y_t\) given
\(Y_{1:t-1}=y_{1:t-1}\), namely
\(
\xi_{y_{1:t-1}}(g_{\mathrm{y_t}})
=
p_{Y_t\mid y_{1:t-1}}(\mathrm{y_t}).
\)
Consequently, evaluating this identity at the random observation history \(Y_{1:t-1}\)
\[
\xi_{Y_{1:t-1}}(g_{\mathrm{y_t}})
=
p_{Y_t\mid Y_{1:t-1}}(\mathrm{y_t})
\qquad \text{a.s.}
\]
That is, \(\xi_{Y_{1:t-1}}(g_{\mathrm{y_t}})\) is the conditional density of \(Y_t\) given the observation filtration \(\sigma(Y_{1:t-1})\).
\end{remark}

\begin{proposition}\label{prop:Ypdf}
Let \(\mathcal G_{t-1} := \sigma(Y_{1:t-1})\) be the \(\sigma\)-algebra generated by the random observations \(Y_{1:t-1}\). Then, for any integrable function \(f : \mathcal Y^t \to \mathbb R\),
\begin{equation*}
\mathbb E\!\left[f(Y_{1:t}) \mid \mathcal G_{t-1}\right]
=
\int_{\mathcal Y}
f(Y_{1:t-1}, \mathrm{y_t})\,
\xi_{Y_{1:t-1}}(g_{\mathrm{y_t}})\,
\mathrm d \mathrm{y_t}.
\end{equation*}
\end{proposition}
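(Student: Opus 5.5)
The plan is to reduce the claim to the standard disintegration formula for conditional expectations, combined with Remark~\ref{remark:likelihood_identity}. I would first establish that the joint law of $Y_{1:t}$ admits a density with respect to Lebesgue measure on $\mathcal Y^t$, and that this density factorizes as
\[
p_{Y_{1:t}}(\mathrm y_{1:t}) = \prod_{s=1}^t \xi_{\mathrm y_{1:s-1}}(g_{\mathrm y_s}),
\]
with the convention $\xi_{\mathrm y_{1:0}} = K_1(\pi_0)$. This follows by induction on $t$. Integrate out the hidden states $X_{0:t}$ in the joint density
\[
\pi_0(\sd x_0)\prod_s K_s(x_{s-1},\sd x_s)\, g_s(\mathrm y_s\mid x_s),
\]
which is the Markov and conditional-independence structure of $\mM$. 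Then apply Fubini (all integrands are nonnegative) and use the recursive definition \eqref{eq:random_filter} of the filter: the normalizing constant $\xi_{\mathrm y_{1:s-1}}(g_{\mathrm y_s})$ produced at step $s$ is exactly what is needed to pass from $\pi_{\mathrm y_{1:s-1}}$ to $\pi_{\mathrm y_{1:s}}$. This is precisely the content of Remark~\ref{remark:likelihood_identity}, which I will take as given. The normalization \eqref{eq:g_normalized} ensures that each factor is a probability density in $\mathrm y_s$.

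Next I verify the defining property of conditional expectation. Set
\[
h(\mathrm y_{1:t-1}) := \int_{\mathcal Y} f(\mathrm y_{1:t-1},\mathrm y_t)\,\xi_{\mathrm y_{1:t-1}}(g_{\mathrm y_t})\,\sd \mathrm y_t .
\]
Then $h(Y_{1:t-1})$ is $\mathcal G_{t-1}$-measurable. This requires joint measurability of $(\mathrm y_{1:t-1},\mathrm y_t)\mapsto \xi_{\mathrm y_{1:t-1}}(g_{\mathrm y_t})$, which I would obtain from the explicit integral representation above together with Fubini--Tonelli measurability for kernels. For any bounded Borel $A\subseteq\mathcal Y^{t-1}$, the factorization and Fubini give
\[
\mathbb E[f(Y_{1:t})\mathbbm 1_A(Y_{1:t-1})] = \int_A p_{Y_{1:t-1}}(\mathrm y_{1:t-1})\, h(\mathrm y_{1:t-1})\,\sd \mathrm y_{1:t-1} = \mathbb E[h(Y_{1:t-1})\mathbbm 1_A(Y_{1:t-1})].
\]
Since sets of the form $\{Y_{1:t-1}\in A\}$ generate $\mathcal G_{t-1}$, the identity follows a.s. by uniqueness of conditional expectation. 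Integrability of $f(Y_{1:t})$ guarantees that $h$ is finite for almost every $\mathrm y_{1:t-1}$ and justifies Fubini; I would first treat $f\ge 0$ by Tonelli and then pass to general $f$ via $f=f^+-f^-$.

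I expect the main obstacle to be the measurability and version issues rather than any computation. The filter $\pi_{\mathrm y_{1:s}}$ is defined through the recursion, so one must check that it is a genuine probability kernel from $\mathcal Y^s$ to $\mathcal X$. The inductive step then handles this: the explicit ratio-of-integrals form shows that $\mathrm y_{1:s}\mapsto \pi_{\mathrm y_{1:s}}(B)$ is Borel for each $B$. Positivity of $g_t$ ensures the denominators never vanish, so no null-set exceptions are needed there.
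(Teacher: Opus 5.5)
Your proposal is correct and follows essentially the same route as the paper. Both arguments define the candidate $\int_{\mathcal Y} f(Y_{1:t-1},\mathrm{y_t})\,p_{Y_t\mid Y_{1:t-1}}(\mathrm{y_t})\,\mathrm d\mathrm{y_t}$, check the defining property of conditional expectation on the sets $\{Y_{1:t-1}\in A\}$ using the factorization $p_{Y_{1:t}}=p_{Y_t\mid Y_{1:t-1}}\,p_{Y_{1:t-1}}$ and Fubini--Tonelli, and identify the conditional density with $\xi_{Y_{1:t-1}}(g_{\mathrm{y_t}})$ through Remark~\ref{remark:likelihood_identity}. The differences are minor: you apply the Remark before the verification rather than after it, and you also sketch the product form of the joint density together with the measurability and $f=f^+-f^-$ details, which the paper leaves implicit.
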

The proof can be found in Appendix~\ref{proof:Ypdf}.


\subsection{Random optimal filters}

Our goal is to analyze the quantity
\[
\pi_{Y_{1:t}}(f) = \int_{\mathcal X} f(x)\, \pi_{Y_{1:t}}(\mathrm dx),
\]
that is, the expectation of a test function \( f \in B(\mathcal X) \) w.r.t. the random filtering measure \( \pi_{Y_{1:t}} \), for \( t \ge 1 \). Since the filter depends on the observation sequence \( Y_{1:t} \), the quantity \( \pi_{Y_{1:t}}(f) \) is itself a random variable.
We first recall some basic notions from the theory of random measures; see \cite{baccelli2024random} for a general discussion.

\begin{definition}
Let \((\mathcal X,\mathcal B(\mathcal X))\) be a measurable space and let
\(\mu_Y\) be a random measure on \(\mathcal X\).
Its mean measure is defined by
\(
M_{\mu_Y}(B)
:=
\mathbb E[\mu_Y(B)],
\;
B\in\mathcal B(\mathcal X).
\)
\end{definition}
The following fundamental result allows one to exchange expectation and integration w.r.t. a random measure.

\begin{theorem}[Campbell averaging formula]\label{thm:CAF}
Let \( \mu_Y \) be a random measure on a measurable space \( (\mathcal X,\mB(\mX)) \), with $\mX \subseteq R^{d}$, $d\in \mbN$, and let \( M_{\mu_Y} \) denote its mean measure. Then, for any measurable function \( f : \mathcal X \to \overline{\mathbb R} \), where \(\overline{\mathbb R}\) denotes the extended real line, such that \(f\) is either nonnegative or \(M_{\mu_Y}\)-integrable,
\[
\mathbb E\!\left[\int_{\mathcal X} f(x)\, \mu_Y(\mathrm dx) \right]
=
\int_{\mathcal X} f(x)\, M_{\mu_Y}(\mathrm dx).
\]
\end{theorem}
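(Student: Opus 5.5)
The argument is the usual measure-theoretic bootstrap: indicators, simple functions, monotone convergence, then the integrable case by splitting into positive and negative parts. Throughout, $\mu_Y$ is a random measure: for each $\omega$, $\mu_{Y(\omega)}$ is a measure on $(\mX,\mB(\mX))$, and for each $B\in\mB(\mX)$ the map $\omega\mapsto\mu_Y(B)$ is measurable. The mean measure $M_{\mu_Y}(B)=\mbE[\mu_Y(B)]$ is itself a measure on $\mB(\mX)$. It is nonnegative, $M_{\mu_Y}(\emptyset)=0$, and countable additivity follows from countable additivity of each $\mu_{Y(\omega)}$ together with monotone convergence applied to the partial sums $\sum_{n\le N}\mu_Y(B_n)$. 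This is the first step.

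\textbf{Indicators and simple functions.} For $f=\mathbbm{1}_B$ the identity reads $\mbE[\mu_Y(B)]=M_{\mu_Y}(B)$, which is the definition. Linearity of expectation and of integration then gives it for nonnegative simple functions $f=\sum_{i}a_i\mathbbm{1}_{B_i}$ with $a_i\ge0$. Along the way we should note that $\omega\mapsto\int f\,\sd\mu_Y$ is measurable, being a finite linear combination of measurable maps.

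\textbf{Nonnegative measurable $f$.} Let $f:\mX\to[0,\infty]$ be measurable and take simple $0\le f_n\uparrow f$. For each fixed $\omega$, monotone convergence applied to the measure $\mu_{Y(\omega)}$ gives $\int f_n\,\sd\mu_Y\uparrow\int f\,\sd\mu_Y$. Hence $\omega\mapsto\int f\,\sd\mu_Y$ is measurable as a monotone limit of measurable maps, with values in $[0,\infty]$. Applying monotone convergence again, this time to $\mbE$, and once more to $M_{\mu_Y}$, and passing to the limit in the simple-function identity, gives
\[
\mbE\Big[\int f\,\sd\mu_Y\Big]=\lim_n\mbE\Big[\int f_n\,\sd\mu_Y\Big]=\lim_n\int f_n\,\sd M_{\mu_Y}=\int f\,\sd M_{\mu_Y}.
\]
This holds as an equality in $[0,\infty]$.

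\textbf{Integrable $f$.} Suppose $f$ is $M_{\mu_Y}$-integrable and write $f=f^+-f^-$. The nonnegative case gives $\mbE[\int f^\pm\,\sd\mu_Y]=\int f^\pm\,\sd M_{\mu_Y}<\infty$. Consequently $\int f^\pm\,\sd\mu_Y<\infty$ almost surely, so $\int f\,\sd\mu_Y$ is well defined almost surely and is integrable. Subtracting the two identities gives the claim.

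The only point needing care is this last step, where $\int f\,\sd\mu_Y$ might be of the form $\infty-\infty$ on a null set. I would handle it by defining the integral as $0$ on that null set, which does not affect the expectation. The measurability of $\omega\mapsto\int f\,\sd\mu_Y$ was already secured in the nonnegative step.
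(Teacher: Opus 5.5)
Your proposal is correct: the indicator, simple-function, monotone-convergence bootstrap, followed by splitting $f=f^+-f^-$ in the integrable case, is the standard proof of Campbell's averaging formula. The paper gives no argument of its own and cites Theorem~1.2.5 of \cite{baccelli2024random}, which rests on this standard argument, so your write-up essentially spells out the cited proof.
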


\begin{proof}
See Theorem~1.2.5 in \cite{baccelli2024random}.
\end{proof}
By Theorem~\ref{thm:CAF}, we obtain
\(
\mathbb E\!\left[\pi_{Y_{1:t}}(f)\right]
= \int_{\mathcal X} f(x)\, M_{\pi_{Y_{1:t}}}(\mathrm dx),
\)
so we aim to characterize the mean measure
\(
M_{\pi_{Y_{1:t}}}=\mathbb E[\pi_{Y_{1:t}}].
\)
We begin with a general property describing the interaction between Markov kernels and random measures.
\begin{proposition}\label{prop:kernel_expectation}
Let \( \mu_Y \) be a random measure on \( \mathcal X \), and let  
\( K : \mathcal X \times \mathcal B(\mathcal X) \to [0,1] \) be a Markov kernel.  
Define the random measure \( K(\mu_Y) \) by
\begin{equation*}
K(\mu_Y)(F)
:= \int_{\mathcal X} K(x,F)\, \mu_Y(\mathrm dx),
\qquad F \in \mathcal B(\mathcal X).
\end{equation*}
Then its mean measure satisfies
\(
\mathbb E[K(\mu_Y)]
= K(\mathbb E[\mu_Y]).
\) Equivalently, for any bounded measurable test function \(f\), we have
\beq\label{eq:kernel_expectation}
\mbE\!\left[
K(\mu_Y)(f)
\right]
=
K\!\left(\mbE[\mu_Y] \right)(f),
\eeq
i.e. the action of the Markov kernel and the expectation operator can be interchanged.
\end{proposition}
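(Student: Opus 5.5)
The plan is to apply the Campbell averaging formula (Theorem~\ref{thm:CAF}) to the function $x\mapsto K(x,F)$, which is measurable in $x$ and takes values in $[0,1]$. Measurability holds because $K$ is a Markov kernel.

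First fix $F\in\mathcal B(\mathcal X)$. By definition, $K(\mu_Y)(F)=\int_{\mathcal X}K(x,F)\,\mu_Y(\mathrm dx)$. Since $x\mapsto K(x,F)$ is nonnegative and measurable, Theorem~\ref{thm:CAF} applies with no integrability condition and gives
\[
\mathbb E\big[K(\mu_Y)(F)\big]=\int_{\mathcal X}K(x,F)\,M_{\mu_Y}(\mathrm dx)=K(\mathbb E[\mu_Y])(F).
\]
This already proves the identity of mean measures $\mathbb E[K(\mu_Y)]=K(\mathbb E[\mu_Y])$ setwise, i.e.\ $M_{K(\mu_Y)}=K(M_{\mu_Y})$. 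The only preliminary check is that $K(\mu_Y)$ is itself a random measure, so that its mean measure is defined. For fixed $\omega$ it is a measure by monotone convergence, since $F\mapsto K(x,F)$ is countably additive. Measurability in $\omega$ of $K(\mu_Y)(F)$ follows by approximating $K(\cdot,F)$ with simple functions and using measurability of $\omega\mapsto\mu_Y(B)$.

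For the functional form \eqref{eq:kernel_expectation}, take $f$ bounded measurable and write $Kf(x):=\int f(z)K(x,\mathrm dz)$, which is bounded and measurable with $\|Kf\|_\infty\le\|f\|_\infty$. For fixed $\omega$, the standard identity $K(\mu_Y)(f)=\mu_Y(Kf)$ holds by the usual indicator, simple and limit argument. We then apply Theorem~\ref{thm:CAF} to $Kf$. This requires $Kf$ to be $M_{\mu_Y}$-integrable, which holds whenever $M_{\mu_Y}$ is a finite measure, e.g.\ when $\mu_Y$ is a random probability measure as in the filtering application, where $M_{\mu_Y}(\mathcal X)=1$. Alternatively, one can split $f=f^+-f^-$ and use the nonnegative case for each part, with finiteness again coming from $M_{\mu_Y}(\mathcal X)<\infty$. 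This yields $\mathbb E[K(\mu_Y)(f)]=M_{\mu_Y}(Kf)=K(M_{\mu_Y})(f)$.

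The main obstacle is the integrability and finiteness bookkeeping in the signed case, since Campbell's formula for general $f$ needs $M_{\mu_Y}$-integrability. I would handle it by stating the nonnegative case first, which needs no conditions, and then invoking finiteness of the mean measure, which is automatic for random probability measures, to subtract the positive and negative parts without producing $\infty-\infty$.
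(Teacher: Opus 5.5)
Your proposal is correct and follows the same route as the paper, which just applies the Campbell averaging formula (Theorem~\ref{thm:CAF}): to $x\mapsto K(x,F)$ for the setwise identity, and to $Kf$ for test functions. Your added bookkeeping fills in details the paper leaves implicit, namely that $K(\mu_Y)$ is a random measure and that the functional form needs $M_{\mu_Y}$ to be finite, which is automatic for random probability measures such as the filters.
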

\begin{proof}
It follows from an immediate application of Theorem~\ref{thm:CAF}.
\end{proof}
  As a consequence of Proposition~\ref{prop:kernel_expectation}, the deterministic mean filtering law
\(
M_{\pi_{Y_{1:t}}}
=
\mathbb E[\pi_{Y_{1:t}}]
\)
satisfies a linear recursion.
\begin{theorem}\label{thm:mean_filter}
Let \(\{\pi_{Y_{1:t}}\}_{t\geq 1}\) denote the random filtering sequence defined in
\eqref{eq:random_filter}. Then the associated mean measures evolve according to the linear recursion
\(
M_{\pi_{Y_{1:t}}}
=
K_t(M_{\pi_{Y_{1:t-1}}}).
\)
Equivalently, for any measurable test function \(f:\mX \to \mbR\), we have
\[
M_{\pi_{Y_{1:t}}}(f)
=
K_t(M_{\pi_{Y_{1:t-1}}})(f).
\]
\end{theorem}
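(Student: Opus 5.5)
The key is that the update step is a martingale-type operation in mean: averaging the Bayes update over the conditional law of the new observation returns the predictive measure. I will establish $\mbE[\pi_{Y_{1:t}}(f)\mid\mathcal G_{t-1}] = \xi_{Y_{1:t-1}}(f)$ for bounded measurable $f$. Then I take full expectations and invoke Proposition~\ref{prop:kernel_expectation} with $\mu_Y=\pi_{Y_{1:t-1}}$.

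\emph{Step 1: conditional expectation of the update.} Fix a bounded measurable $f$ and write $\pi_{Y_{1:t}}(f)=F(Y_{1:t-1},Y_t)$, where
\[
F(y_{1:t-1},\mathrm{y_t}) := \frac{\xi_{y_{1:t-1}}(f g_{\mathrm{y_t}})}{\xi_{y_{1:t-1}}(g_{\mathrm{y_t}})} .
\]
This is bounded by $\|f\|_\infty$ and hence integrable. I apply Proposition~\ref{prop:Ypdf}, which gives
\[
\mbE[\pi_{Y_{1:t}}(f)\mid\mathcal G_{t-1}] = \int_{\mathcal Y} \frac{\xi_{Y_{1:t-1}}(f g_{\mathrm{y_t}})}{\xi_{Y_{1:t-1}}(g_{\mathrm{y_t}})}\,\xi_{Y_{1:t-1}}(g_{\mathrm{y_t}})\,\sd\mathrm{y_t} = \int_{\mathcal Y}\xi_{Y_{1:t-1}}(f g_{\mathrm{y_t}})\,\sd\mathrm{y_t}.
\]
The cancellation is legitimate because $g_t>0$, so the denominator is strictly positive. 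Next I use Tonelli/Fubini; this is justified since $|f|g_{\mathrm{y_t}}\le \|f\|_\infty g_{\mathrm{y_t}}$ and $\xi$ is a probability measure. Exchanging the $\sd\mathrm{y_t}$ and $\xi_{Y_{1:t-1}}(\sd x)$ integrals and using the normalization \eqref{eq:g_normalized}, $\int g_t(\mathrm{y}\mid x)\,\sd\mathrm{y}=1$, yields $\xi_{Y_{1:t-1}}(f)=K_t(\pi_{Y_{1:t-1}})(f)$ a.s.

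\emph{Step 2: averaging.} Taking expectations via the tower property gives $\mbE[\pi_{Y_{1:t}}(f)]=\mbE[K_t(\pi_{Y_{1:t-1}})(f)]$. By Proposition~\ref{prop:kernel_expectation}, the right-hand side equals $K_t(M_{\pi_{Y_{1:t-1}}})(f)$. For the left-hand side, Theorem~\ref{thm:CAF} identifies it with $M_{\pi_{Y_{1:t}}}(f)$. Choosing $f=\mathbbm 1_B$ for $B\in\mB(\mX)$ gives equality of the measures. For $t=1$ the same argument applies with $\pi_{Y_{1:0}}:=\pi_0$ deterministic, so $M_{\pi_0}=\pi_0$.

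\emph{Main obstacle.} The only delicate point is measurability of $F$ jointly in $(y_{1:t-1},\mathrm{y_t})$, which Proposition~\ref{prop:Ypdf} needs in order to apply. I plan to handle it by noting that $(y_{1:t-1},x)\mapsto$ the recursively defined kernel $\pi_{y_{1:t-1}}(\sd x)$ is a measurable kernel. This follows by induction from the measurability of $g_t$ and $K_t$, so that $y\mapsto \xi_y(h)$ is measurable for every measurable $h\ge0$.

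The statement extends from bounded $f$ to general measurable test functions through Theorem~\ref{thm:CAF}, for nonnegative or $M$-integrable $f$. This is done by monotone convergence on $f^\pm$.
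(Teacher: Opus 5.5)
Your proof is correct and is essentially the paper's argument: the tower property, Proposition~\ref{prop:Ypdf} to identify the conditional law of $Y_t$, cancellation of the Bayes normalizing constant, Fubini together with the normalization \eqref{eq:g_normalized}, and then Proposition~\ref{prop:kernel_expectation} with Theorem~\ref{thm:CAF}. The only difference is the order of operations: you apply Proposition~\ref{prop:Ypdf} to the whole ratio $\pi_{Y_{1:t}}(f)$ and then swap integrals, whereas the paper first moves the conditional expectation inside the $x$-integral and shows $\mathbb E[g_{Y_t}(x)/\xi_{Y_{1:t-1}}(g_{Y_t})\mid\mathcal G_{t-1}]=1$ pointwise in $x$; your ordering, together with the measurability remark that the paper omits, is if anything slightly cleaner.
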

\begin{proof}
Let \( \mathcal G_{t-1} = \sigma(Y_{1:t-1}) \). By the tower property, for any bounded measurable test function \(f\), we have \(
\mathbb E[\pi_{Y_{1:t}}(f)]
= \mathbb E\!\left[\mathbb E\!\left[\pi_{Y_{1:t}}(f) \mid \mathcal G_{t-1}\right]\right].
\)
Using the definition of the filter, Fubini's theorem, and the fact that \(K_t(\pi_{Y_{1:t-1}})\) is \(\mathcal G_{t-1}\)-measurable, we obtain
\begin{align}
\mathbb E[\pi_{Y_{1:t}}(f)]
&= \mathbb E\left[
\mathbb E\left[
\frac{1}
{\xi_{Y_{1:t-1}}(g_{Y_t})} \int_{\mX} f(x) g_{Y_t}(x)\, K_t(\pi_{Y_{1:t-1}})(\sd x)
\,\middle|\, \mathcal G_{t-1}
\right]\right] \notag \\
&= \mathbb E\!\left[
\int_{\mX} \mathbb E\left[
\frac{g_{Y_t}(x)}
{\xi_{Y_{1:t-1}}(g_{Y_t})}
\,\middle|\, \mathcal G_{t-1}
\right]
\, f(x) K_t(\pi_{Y_{1:t-1}})(\sd x)
\right]. \label{CE_g_refined}
\end{align}
By Proposition~\ref{prop:Ypdf},
\begin{equation}\label{eq:E_C_g_refined}
\mathbb E\!\left[
\frac{g_{Y_t}(x)}
{\xi_{Y_{1:t-1}}(g_{Y_t})}
\,\middle|\, \mathcal G_{t-1}
\right]
=
\int_{\mathcal Y}
\frac{g_{\mathrm{y_t}}(x)}
{\xi_{Y_{1:t-1}}(g_{\mathrm{y_t}})}
\, \xi_{Y_{1:t-1}}(g_{\mathrm{y_t}})
\, \mathrm d\mathrm{y_t}= \int_{\mY} g_{\mathrm{y_t}}(x) \sd \mathrm{y_t} = 1,
\end{equation}
where the normalization term cancels inside the integral and we have used the fact that $g_{\mathrm{y_t}}(x) = g_t(\mathrm{y_t} \mid x)$ is a normalized pdf. Substituting \eqref{eq:E_C_g_refined} into \eqref{CE_g_refined}, yields
\[
\mathbb E[\pi_{Y_{1:t}}(f)]
=\mathbb E\!\left[K_t(\pi_{Y_{1:t-1}})(f)\right].
\]
The result now follows from the Campbell averaging formula,
Theorem~\ref{thm:CAF}, together with
Eq.~\eqref{eq:kernel_expectation} in
Proposition~\ref{prop:kernel_expectation}.
\end{proof}

\begin{remark}
Applying Theorem~\ref{thm:mean_filter} recursively, we obtain
\[
M_{\pi_{Y_{1:t}}}
=
K_t \circ K_{t-1} \circ \cdots \circ K_1(\pi_0).
\]
\end{remark}
The key mechanism underlying Theorem~\ref{thm:mean_filter} is the cancellation of the normalization constant when taking conditional expectations in \eqref{CE_g_refined}.  
In the next section, we further exploit this property.

\subsection{An operator-theoretic filtering model}
\subsubsection{Prediction and update operators}
Let \(\mbM(\mX)\) denote the space of totally finite signed measures on
\((\mX,\mB(\mX))\) \cite{folland1999real}. The operator-theoretic formulation of the filtering recursion is based on the following prediction and update operators.

The prediction operator $K_t : \mbM(\mX) \to \mbM(\mX)$ is defined by
\begin{equation}\label{PO}
    K_t(\mu)(A)
    := \int_{\mX} K_t(x, A)\, \mu(\mathrm dx),
    \qquad A \in \mB(\mX).
\end{equation}
In particular, the predictive measure associated with the filtering recursion is given by
\(
\xi_{Y_{1:t-1}}
=
K_t(\pi_{Y_{1:t-1}}).
\)

The update operator $U_t : \mP(\mX) \times \mY \to \mP(\mX)$ is defined by
\beq\label{EUO}
U_t(\mu, y_t) := \frac{g_{y_t}(x)\,\mu}{\mu(g_{y_t})}.
\eeq
For conciseness, we write $U_{y_t}(\mu) := U_t(\mu, y_t)$.
By composition, we define the non-linear \emph{random prediction--update operator} 
\begin{equation}\label{PEUO}
    \Phi_{Y_t} := U_{Y_t} \circ K_t.
\end{equation}
For any \(\mu \in \mathcal P(\mathcal X)\), the mapping
\(
\Phi_{Y_t}(\mu)
\)
defines a random probability measure, depending on the input measure \(\mu\) and the realization of the random observation \(Y_t\). More generally, for $1 \leq k < t$, we define the composition
\begin{equation}\label{RPEU}
\Phi_{Y_{k:t}}
:= \Phi_{Y_t} \circ \Phi_{Y_{t-1}} \circ \cdots \circ \Phi_{Y_k}.
\end{equation}
With this notation, the filtering recursion can be written compactly as
\[
\pi_{Y_{1:t}} = \Phi_{Y_{t}}(\pi_{Y_{1:t-1}})=U_{Y_t} \circ K_t(\pi_{Y_{1:t-1}})=\Phi_{Y_{1:t}}(\pi_0), \quad t\ge1.
\]
Note that $\Phi_{Y_{k:t}}$ depends only on the kernels $K_i$ and likelihoods $g_{Y_i}$ for $i=k,\dots,t$, and not on the initial measure $\pi_0$.

\begin{definition}
Define
\(
\mD(\mX)
:=
\left\{
\beta(\pi_1-\pi_2)
:\;
\beta\in\mathbb R,\;
\pi_1,\pi_2\in\mathcal P(\mX)
\right\}.
\)
Then \(\mD(\mX)\) is the linear subspace of \(\mbM(\mX)\) generated by the difference of probability measures (see Supplement~\ref{app:A}).
Moreover,
\((\mD(\mX),\|\cdot\|_{\mathrm{TV}})\) is a Banach space.
\end{definition}

\begin{remark}
The predictive operators \(K_t\) act linearly on \(\mD(\mX)\). By Proposition~\ref{MKPK}
(see Supplement~\ref{Ap:Kernels}), each \(K_t\) defines a bounded linear operator on \(\mD(\mX)\). The corresponding induced operator norm is
\[
\|K_t\|
:=
\sup_{\substack{\lambda\in\mD(\mX)\\ \lambda\neq0}}
\frac{\|K_t(\lambda)\|_{\mathrm{TV}}}
{\|\lambda\|_{\mathrm{TV}}}.
\]
Furthermore,
\(
\|K_t\|\leq1.
\)
and therefore \(K_t\) is non expansive on \(\mD(\mX)\).
\end{remark}
  As a direct consequence of the linearity of the prediction operator $K_t$, for any $\mu,\nu \in \mathcal P(\mX)$,

\beq\label{ineq:Psi_bound}
\|K_t(\mu) - K_t(\nu)\|_{\mathrm{TV}}
\leq \|K_t\|\, \|\mu - \nu\|_{\mathrm{TV}}.
\eeq
We now provide an upper bound for the update operator $U_t$.

\begin{proposition}\label{DUI}
Let $\mu, \nu \in \mathcal P(\mathcal X)$. Then
\[
\|U_{y_t}(\mu) - U_{y_t}(\nu)\|_{\mathrm{TV}}
\leq \frac{1}{\max \big\{ \mu(g_{y_t}), \nu(g_{y_t}) \big\} }
\left(
\int_{\mathcal X} g_{y_t}(x)\, |\mu - \nu|(\mathrm dx)
+ |\nu(g_{y_t}) - \mu(g_{y_t})|
\right).
\]
\end{proposition}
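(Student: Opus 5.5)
We obtain the bound twice, once normalising by $\mu(g_{y_t})$ and once by $\nu(g_{y_t})$, and keep the better of the two. Abbreviate $g:=g_{y_t}$, $a:=\mu(g)>0$, $b:=\nu(g)>0$; both are finite, which is implicit in the definition \eqref{EUO} of $U_t$. We use the convention $\|m\|_{\mathrm{TV}}=|m|(\mathcal X)$. The factor of $1/2$ in the other common convention only makes the statement easier, since the right-hand side is fixed and the left-hand side gets smaller. By \eqref{EUO},
\[
U_{y_t}(\mu)-U_{y_t}(\nu)=\frac{g\,\mu}{a}-\frac{g\,\nu}{b}.
\]

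\emph{First decomposition.} Add and subtract $g\nu/a$:
\[
\frac{g\mu}{a}-\frac{g\nu}{b}=\frac{g(\mu-\nu)}{a}+g\,\nu\Big(\frac1a-\frac1b\Big).
\]
Take total variation of each term. Since $g>0$, the first term has total variation $\frac1a\int g\,\mathrm d|\mu-\nu|$. The second term has total variation $\nu(g)\,\frac{|b-a|}{ab}=\frac{|b-a|}{a}$. Hence
\[
\|U_{y_t}(\mu)-U_{y_t}(\nu)\|_{\mathrm{TV}}\le \frac{1}{a}\Big(\int g\,\mathrm d|\mu-\nu|+|\nu(g)-\mu(g)|\Big).
\]
\emph{Second decomposition.} Instead add and subtract $g\mu/b$:
\[
\frac{g\mu}{a}-\frac{g\mu}{b}+\frac{g(\mu-\nu)}{b}.
\]
By the same computation this yields the identical bound with prefactor $1/b$.

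Taking the minimum of the two bounds gives the prefactor $\min\{1/a,1/b\}=1/\max\{a,b\}$, which is the claim. The argument only uses $g>0$ measurable, so that $|g\,m|=g\,|m|$ for a signed measure $m$, and positivity of $\mu$ and $\nu$. The one step that needs care is choosing the cross term so that the weight left on the $1/a-1/b$ difference is exactly the normaliser that cancels one denominator. In the first decomposition that weight is $\nu(g)=b$, cancelling $1/b$. In the second it is $\mu(g)=a$, cancelling $1/a$. Pairing them this way is what produces $\max$ rather than $\min$ in the statement.
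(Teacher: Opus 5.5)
Your proof is correct and is essentially the paper's argument. Your first decomposition is exactly the paper's identity $U_{y_t}(\mu)-U_{y_t}(\nu)=\frac{1}{\mu(g_{y_t})}\bigl[g_{y_t}(\mu-\nu)+U_{y_t}(\nu)\bigl(\nu(g_{y_t})-\mu(g_{y_t})\bigr)\bigr]$, since $g\nu(1/a-1/b)=U_{y_t}(\nu)(b-a)/a$, and you bound it with total variation measures where the paper uses the dual characterization over $0\le f\le 1$. The paper's written proof stops at the prefactor $1/\mu(g_{y_t})$, so your explicit second decomposition (equivalently, swapping $\mu$ and $\nu$) is the step that actually yields the $1/\max\{\mu(g_{y_t}),\nu(g_{y_t})\}$ in the statement.
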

A proof is provided in Appendix~\ref{proof:DUI}
\subsubsection{The Link operator}
We introduce the \emph{link operator} $L_t : \mbM(\mX) \to \mbM(\mY)$ defined by
\begin{equation}
L_t(\mu)(F)
: = \int_F \int_{\mX} g_{\mathrm{y_t}}(x) \mu(\sd x) \sd \mathrm{y_t}= \int_F \mu(g_{\mathrm{y_t}})\, \mathrm d\mathrm{y_t},
\qquad F \in \mathcal B(\mathcal Y).
\end{equation}
This operator maps a measure on the state space into a measure on the observation space. By construction, $L_t$ is linear and if $\mu \in \mathcal P(\mathcal X)$, then $L_t(\mu) \in \mathcal P(\mathcal Y)$. Indeed,
\[
L_t(\mu)(\mathcal Y)
= \int_{\mathcal X} \left( \int_{\mathcal Y} g_{\mathrm{y_t}}(x)\, \mathrm d\mathrm{y_t} \right) \mu(\mathrm dx)
= \mu(\mathcal X) = 1.
\]
Most importantly, for our analysis,  $L_t$ maps $\mD(\mX)$ into $\mD(\mY)$. Indeed, if $\lambda = \beta(\mu - \nu)$ with $\mu,\nu \in \mathcal P(\mathcal X)$, then by linearity
\[
L_t(\lambda) = \beta \big(L_t(\mu) - L_t(\nu)\big).
\]
\begin{proposition}\label{prop:Link_Op}
  The link operator \(L_t\) defines a bounded linear operator on the Banach space \((\mathcal{D}(\mathcal{X}), \|\cdot\|_{TV})\). Moreover, \(\|L_t\| \leq 1\) for all $t\geq1$.
\end{proposition}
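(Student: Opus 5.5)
Linearity of \(L_t\) on \(\mbM(\mX)\) follows directly from linearity of the integral. Since \(L_t\) maps \(\mD(\mX)\) into \(\mD(\mY)\), as shown above, it remains to prove the norm bound \(\|L_t(\lambda)\|_{\mathrm{TV}}\le\|\lambda\|_{\mathrm{TV}}\) for every \(\lambda\in\mD(\mX)\). Boundedness then follows with operator norm at most \(1\). I will use the convention for \(\|\cdot\|_{\mathrm{TV}}\) fixed in Supplement~\ref{app:A}. Whether it is the total variation \(|\lambda|(\mX)\) or the supremum over sets, the argument is the same, because both are controlled through the Jordan decomposition.

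The first step is to show that \(L_t(\lambda)\) is absolutely continuous with respect to Lebesgue measure on \(\mY\), with density \(h(\mathrm{y_t}) := \lambda(g_{\mathrm{y_t}}) = \int_{\mX} g_t(\mathrm{y_t}\mid x)\,\lambda(\sd x)\). Write \(\lambda=\lambda^+-\lambda^-\) with \(|\lambda|=\lambda^++\lambda^-\). Since \(g_t\ge0\), Tonelli's theorem together with \eqref{eq:g_normalized} gives
\[
\int_{\mY}\int_{\mX} g_t(\mathrm{y}\mid x)\,|\lambda|(\sd x)\,\sd \mathrm{y} = |\lambda|(\mX)<\infty .
\]
Hence \(h\in L^1(\sd \mathrm{y})\), and the Fubini exchange in the definition of \(L_t\) is justified.

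The key estimate then follows, since the total variation of a measure with a density is the \(L^1\) norm of that density:
\[
|L_t(\lambda)|(\mY)=\int_{\mY}|h(\mathrm{y})|\,\sd\mathrm{y}
\le \int_{\mY}\int_{\mX} g_t(\mathrm{y}\mid x)\,|\lambda|(\sd x)\,\sd\mathrm{y}
= |\lambda|(\mX).
\]
The inequality uses \(|\int g\,\sd\lambda|\le \int g\,\sd|\lambda|\) for \(g\ge 0\). If instead the paper uses the supremum-over-sets convention \(\sup_F|\lambda(F)|\), the same conclusion holds. In that case I would note that \(\lambda\in\mD(\mX)\) has \(\lambda(\mX)=0\), so \(\sup_F|\lambda(F)| = \tfrac12|\lambda|(\mX)\). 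Likewise \(L_t(\lambda)(\mY)=\lambda(\mX)=0\), so \(\sup_F |L_t(\lambda)(F)| = \tfrac12 |L_t(\lambda)|(\mY)\), and the factor \(\tfrac12\) appears on both sides.

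Taking the supremum over \(\lambda\neq 0\) gives \(\|L_t\|\le1\) for every \(t\ge1\). I expect no real obstacle here. The only points needing care are the measure-theoretic justification of the Fubini/Tonelli exchange, which relies on joint measurability of \(g_t\) on \(\mY\times\mX\), and matching the TV normalization used in the paper. An alternative route is to view \(L_t\) as the Markov kernel \(x\mapsto g_t(\cdot\mid x)\,\sd\mathrm{y}\) from \(\mX\) to \(\mY\). The bound then follows verbatim from the argument of Proposition~\ref{MKPK}, which applies to any Markov kernel.
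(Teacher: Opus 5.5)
Your proof is correct, but your main argument takes a different route from the paper's. The paper's proof is essentially the ``alternative route'' you mention at the end.

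\textbf{The paper's argument.} It takes a Hahn decomposition \((A,B)\) of \(\lambda\) and drops the nonpositive contribution \(\int_B g_{\mathrm{y_t}}\,\sd\lambda\). This gives \(L_t(\lambda)(F)\le \int_A\big(\int_{\mathcal Y} g_{\mathrm{y_t}}(x)\,\sd\mathrm{y_t}\big)\lambda(\sd x)=\lambda(A)\) uniformly in \(F\). It then applies Proposition~\ref{NPN}, using \(\lambda(\mathcal X)=0\) to get \(\lambda(A)=\|\lambda\|_{\mathrm{TV}}\) and \(L_t(\lambda)(\mathcal Y)=0\) to get \(\sup_F L_t(\lambda)(F)=\|L_t(\lambda)\|_{\mathrm{TV}}\). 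This is the template of Proposition~\ref{MKPK} transplanted to a kernel from \(\mathcal X\) to \(\mathcal Y\). It stays within the set-function description of the norm, but it genuinely needs the zero-mass property to turn a one-sided bound into a norm bound.

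\textbf{Your argument.} You identify the density \(\mathrm{y}\mapsto\lambda(g_{\mathrm{y}})\) of \(L_t(\lambda)\), use \(|\lambda(g_{\mathrm{y}})|\le|\lambda|(g_{\mathrm{y}})\), and integrate with Tonelli and the normalization \eqref{eq:g_normalized}. The paper's convention \(\tfrac12(\sup_F m(F)-\inf_F m(F))\) equals \(\tfrac12|m|(\mathcal X)\) for every finite signed measure. So your inequality \(|L_t(\lambda)|(\mathcal Y)\le|\lambda|(\mathcal X)\) transfers directly, with no zero-mass argument needed. Your case split on TV conventions therefore covers the paper's.

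\textbf{What each route buys.} Yours shows that \(L_t\) is a contraction on all of \(\mbM(\mX)\), not just on \(\mD(\mX)\). It also makes explicit the integrability that justifies the Fubini exchange in the definition of \(L_t\), which the paper leaves implicit. The paper's route keeps the proof uniform with the kernel arguments in the Supplement and avoids identifying total variation with the \(L^1\) norm of a density.
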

A proof is provided in Appendix~\ref{proof:Link_Op}.
\begin{remark}\label{rem:UBT_LP}
Denote by
\[
0\leq \sup_{t\geq1}\|L_t\|\leq C_L\leq 1,
\qquad
0\leq \sup_{t\geq1}\|K_t\|\leq C_K\leq 1,
\]
the uniform upper bounds on the operator norms.
\end{remark}

\begin{proposition}\label{prop:normalizing_constant_positive}
Let \(\mu,\nu\in\mathcal P(\mathcal X)\) be two probability measures such that \(\mu\ll\nu\), and assume that
\(g_{\mathrm{y_t}}(x)=g_t(\mathrm{y_t}\mid x)\geq 0\). Then
\(
\mu(g_{\mathrm{y_t}})>0
\;\text{and}\;
\nu(g_{\mathrm{y_t}})>0,
\)
for \(L_t(\mu)\)-almost every \(\mathrm{y_t}\in\mathcal Y\). In particular, both
\(\mu(g_{\mathrm{y_t}})\) and \(\nu(g_{\mathrm{y_t}})\) may be safely used in denominators inside
\(L_t(\mu)\)-integrals.
\end{proposition}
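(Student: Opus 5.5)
The plan is to show that the set where either normalizing constant vanishes is $L_t(\mu)$-null. Since $g_{\mathrm{y_t}}\ge 0$, the maps $\mathrm{y_t}\mapsto\mu(g_{\mathrm{y_t}})$ and $\mathrm{y_t}\mapsto\nu(g_{\mathrm{y_t}})$ are nonnegative and measurable (by Tonelli, using joint measurability of $g_t$). So we need only control the zero sets
\[
Z_\mu:=\{\mathrm{y_t}\in\mathcal Y:\ \mu(g_{\mathrm{y_t}})=0\},
\qquad
Z_\nu:=\{\mathrm{y_t}\in\mathcal Y:\ \nu(g_{\mathrm{y_t}})=0\},
\]
and show $L_t(\mu)(Z_\mu\cup Z_\nu)=0$.

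\textbf{The set $Z_\mu$.} By the definition of the link operator,
\[
L_t(\mu)(Z_\mu)=\int_{Z_\mu}\mu(g_{\mathrm{y_t}})\,\mathrm d\mathrm{y_t}=0,
\]
because the integrand vanishes identically on $Z_\mu$. Hence $\mu(g_{\mathrm{y_t}})>0$ for $L_t(\mu)$-a.e.\ $\mathrm{y_t}$. This holds for any probability measure $\mu$ and needs no absolute continuity.

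\textbf{The set $Z_\nu$.} This is where $\mu\ll\nu$ enters. Fix $\mathrm{y_t}\in Z_\nu$. Then $\int g_{\mathrm{y_t}}\,\mathrm d\nu=0$ with $g_{\mathrm{y_t}}\ge0$, so $g_{\mathrm{y_t}}=0$ $\nu$-a.e. Equivalently, $\nu(\{x: g_{\mathrm{y_t}}(x)>0\})=0$. By $\mu\ll\nu$, also $\mu(\{x:g_{\mathrm{y_t}}(x)>0\})=0$, hence $\mu(g_{\mathrm{y_t}})=0$. This gives $Z_\nu\subseteq Z_\mu$, and therefore $L_t(\mu)(Z_\nu)\le L_t(\mu)(Z_\mu)=0$. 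A union bound then yields $L_t(\mu)(Z_\mu\cup Z_\nu)=0$, which is the claim.

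The final sentence of the statement follows directly. Any integrand of the form $h(\mathrm{y_t})/\mu(g_{\mathrm{y_t}})$ or $h(\mathrm{y_t})/\nu(g_{\mathrm{y_t}})$ is well defined off an $L_t(\mu)$-null set. Such integrals against $L_t(\mu)$ are therefore unambiguous, whatever convention is used on the null set.

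\textbf{Main obstacle.} There is no deep difficulty. The only delicate point is the inclusion $Z_\nu\subseteq Z_\mu$, which must be argued pointwise in $\mathrm{y_t}$ through the support set $\{g_{\mathrm{y_t}}>0\}$, together with checking measurability of $Z_\mu$ and $Z_\nu$. Note also that under the paper's standing assumption $g_t>0$ both constants are in fact strictly positive for every $\mathrm{y_t}$. The proposition is stated for $g\ge0$, so the null-set argument above is the one to give.
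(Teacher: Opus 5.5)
Your proof is correct and is essentially the paper's argument. The paper also shows that $L_t(\mu)$ assigns zero mass to the set where $\mu(g_{\mathrm{y_t}})=0$, because the density vanishes there, and then uses $\mu\ll\nu$ to get the inclusion of the $\nu$-zero set into the $\mu$-zero set. Your remarks on measurability via Tonelli and the explicit union bound are small additions that the paper leaves implicit.
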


The proof is provided in Appendix~\ref{proof:normalizing_constant_positive}.

  Note that under the stronger assumption
\(
g_{\mathrm{y_t}}(x)>0,
\) \( \forall (x,\mathrm{y_t})\in\mathcal X\times\mathcal Y,
\)
we have
\(
\nu(g_{\mathrm{y_t}})
>0,
\)
for every \(\mathrm{y_t}\in\mathcal Y\) and every \(\nu\in\mathcal P(\mathcal X)\). Hence, in this case, the positivity of \(\nu(g_{\mathrm{y_t}})\) holds pointwise and not merely \(L_t(\mu)\)-almost surely.

%
\newpage
\section{Stability}\label{sec3}

\subsection{Stability of the random optimal filter}

In this section, we formalize the notion of stability for the sequence of random filtering measures and characterize conditions under which the filter asymptotically forgets its initial condition. Unlike the classical formulation, where stability is studied along a fixed observation path, we work within the random-observation framework of Section~\ref{sec2}. Consequently, the filters
\(
\{\pi_{Y_{1:t}}\}_{t\geq 1}
\)
are regarded as random probability measures, and stability is analyzed in expectation. Within this framework, the stability of the random optimal filter is related to the induced operator norms of the predictive operators \(K_t\). This operator-theoretic characterization quantifies how perturbations in the initial distribution propagate through time and provides the basis for the perturbation analysis in Section~\ref{sec4}.

  Following the notion of filter stability in Section~1.2 of \cite{crisan2020stable}, we say that the sequence of random optimal filters
\(
\{\pi_{Y_{1:t}}\}_{t\geq 1}
\)
generated by a SSM is \emph{stable} if the dependence of \(\pi_{Y_{1:t}}\) on the prior distribution \(\pi_0\) vanishes as \(t\to\infty\).

\begin{definition}\label{def:Ex_St}
Let \(\{\Phi_{Y_t}\}_{t \ge 1}\) denote the sequence of random predictive-update operators defined in \eqref{PEUO} and \eqref{RPEU} on \(\mathcal P(\mathcal X)\). The corresponding sequence of random optimal filters \(\{\pi_{Y_{1:t}}\}_{t \ge 1}\) is said to be stable in expectation if, for every pair of prior probability measures \(\pi_0,\widehat{\pi}_0 \in \mathcal P(\mathcal X)\),
\[
\lim_{t\to\infty}
\mathbb E\!\left[
\left\|
\Phi_{Y_{1:t}}(\pi_0)
-
\Phi_{Y_{1:t}}(\widehat{\pi}_0)
\right\|_{\mathrm{TV}}
\right]
=0.
\]
\end{definition}

\begin{remark}
The terminology ``stability of the optimal filter'' may be slightly misleading. Indeed, as the previous definition shows, stability is fundamentally a property of the sequence of random operators \(\{\Phi_{Y_t}\}_{t\geq 1}\), and therefore of the pair \((K,g)\) defining the SSM. For this reason, throughout the sequel we will often refer to the stability of the associated random operators.
\end{remark}

  Recall that the mean measure of the random filters
\(\{\pi_{Y_{1:t}}\}_{t\ge1}\) is
\(M_{\pi_{Y_{1:t}}} = \mathbb E[\pi_{Y_{1:t}}]\). Since every norm is a convex function, Jensen's inequality implies
\beq\label{MM_NC}
\|M_{\pi_{Y_{1:t}}}-M_{\widehat \pi_{Y_{1:t}}}\|_{\mathrm{TV}}
\le
\mathbb E\!\left[
\|\pi_{Y_{1:t}}-\widehat\pi_{Y_{1:t}}\|_{\mathrm{TV}}
\right].
\eeq
Therefore, a necessary condition for the stability of the operators \(\{\Phi_{Y_t}\}_{t\ge1}\) is the coalescence of the mean measures. We now investigate sufficient conditions for stability.

  Throughout the remainder of this section, given two prior probability measures $\pi_0, \widehat{\pi}_0 \in \mathcal{P}(\mathcal{X})$, we denote the corresponding random filtering measures for $t \ge 1$ by
\[
\pi_{Y_{1:t}} := \Phi_{Y_{t}}(\pi_{Y_{1:t-1}}) = \Phi_{Y_{1:t}}(\pi_0), \quad \text{and} \quad \widehat{\pi}_{Y_{1:t}} := \Phi_{Y_{t}}(\widehat{\pi}_{Y_{1:t-1}}) = \Phi_{Y_{1:t}}(\widehat{\pi}_0).
\]
We write $\mathcal{G}_t := \sigma(Y_{1:t})$ for the $\sigma$-algebra generated by the observations up to time $t$. Furthermore, 
$C_K$ denote the uniform bound of the operator sequence $\{\|K_t\|\}_{t\geq1}$ introduced in Remark~\ref{rem:UBT_LP}. Unless otherwise stated, every random probability measure considered hereafter arises explicitly from the action of the filtering operators $\{\Phi_{Y_t}\}_{t\ge1}$ on $\mathcal{P}(\mathcal{X})$. By construction, these random measures are direct functions of the observation path $Y_{1:t}$. 
To clearly distinguish between random and deterministic probability measures, we denote by $\nu_{Y_{1:t}}$ a random measure viewed as a functional of the observation path $Y_{1:t}$, while reserving $\nu \in \mathcal{P}(\mathcal{X})$ strictly for deterministic probability measures.

We now turn to the problem of characterizing the stability of the random filtering operators \(\{\Phi_{Y_{t}}\}_{t\geq1}\). To this end, we follow the approach of \cite{crisan2020stable,mcdonald2020exponential}, beginning with a collection of preliminary estimates.

\subsection{Convergence rates}

We begin with an averaged stability estimate for the update operator \(U_t\). The
following lemma bounds the expected discrepancy between two updated measures
in terms of the discrepancy between their inputs.

\begin{lemma}\label{lem:Emu}
Let \(\mu,\nu\in\mathcal P(\mathcal X)\), and let
\(\mathbb E^{L_t(\mu)}\) denote expectation with respect to the probability
measure \(L_t(\mu)\in\mathcal P(\mathcal Y)\). Then
\[
\mathbb{E}^{L_t(\mu)}\!\left[
\|U_{Y_t}(\mu)-U_{Y_t}(\nu)\|_{\mathrm{TV}}
\right]
\leq
(1+\|L_t\|)\,
\|\mu-\nu\|_{\mathrm{TV}}.
\]
\end{lemma}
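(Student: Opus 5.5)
We start from the pointwise bound of Proposition~\ref{DUI}, integrate it against $L_t(\mu)$, and exploit the fact that the density of $L_t(\mu)$ cancels the normalising constant. Under $\mathbb E^{L_t(\mu)}$ the random observation $Y_t$ has Lebesgue density $\mathrm{y_t}\mapsto\mu(g_{\mathrm{y_t}})$. Writing the expectation as an integral over $\mathcal Y$ gives
\[
\mathbb{E}^{L_t(\mu)}\!\left[\|U_{Y_t}(\mu)-U_{Y_t}(\nu)\|_{\mathrm{TV}}\right]
=\int_{\mathcal Y}\|U_{\mathrm{y_t}}(\mu)-U_{\mathrm{y_t}}(\nu)\|_{\mathrm{TV}}\,\mu(g_{\mathrm{y_t}})\,\mathrm d\mathrm{y_t}.
\]
Because $g_t>0$, the normalising constants are positive for every $\mathrm{y_t}$, as noted after Proposition~\ref{prop:normalizing_constant_positive}, so every quotient is well defined.

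We then apply Proposition~\ref{DUI} and use $\max\{\mu(g_{\mathrm{y_t}}),\nu(g_{\mathrm{y_t}})\}\ge\mu(g_{\mathrm{y_t}})$. The factor $\mu(g_{\mathrm{y_t}})$ in the integrand cancels against the denominator, which leaves
\[
\int_{\mathcal Y}\int_{\mathcal X} g_{\mathrm{y_t}}(x)\,|\mu-\nu|(\mathrm dx)\,\mathrm d\mathrm{y_t}
+\int_{\mathcal Y}|\mu(g_{\mathrm{y_t}})-\nu(g_{\mathrm{y_t}})|\,\mathrm d\mathrm{y_t}.
\]

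The two terms are handled separately.

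\emph{First term.} By Tonelli (everything is nonnegative) and the normalisation \eqref{eq:g_normalized}, it equals $|\mu-\nu|(\mathcal X)=\|\mu-\nu\|_{\mathrm{TV}}$. Here we adopt the total-variation convention $\|\lambda\|_{\mathrm{TV}}=|\lambda|(\mathcal X)$, consistent with the supplement.

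\emph{Second term.} Since $\mu(g_{\mathrm{y_t}})-\nu(g_{\mathrm{y_t}})$ is exactly the Lebesgue density of the signed measure $L_t(\mu-\nu)=L_t(\mu)-L_t(\nu)\in\mathcal D(\mathcal Y)$, this integral is $\|L_t(\mu-\nu)\|_{\mathrm{TV}}$. By Proposition~\ref{prop:Link_Op} it is at most $\|L_t\|\,\|\mu-\nu\|_{\mathrm{TV}}$.

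Adding the two contributions gives the claimed factor $(1+\|L_t\|)$.

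The main obstacle is bookkeeping rather than any genuine difficulty. We must make sure the total-variation normalisation used in Proposition~\ref{DUI} and in the operator norm of $L_t$ is the same, namely $|\cdot|(\mathcal X)$ rather than half of it. If the paper uses the sup-over-sets convention, both terms rescale consistently and the inequality survives. We must also justify the measurability of $\mathrm{y_t}\mapsto\|U_{\mathrm{y_t}}(\mu)-U_{\mathrm{y_t}}(\nu)\|_{\mathrm{TV}}$, which follows by writing it as the integral of the absolute difference of densities with respect to $\mu+\nu$.
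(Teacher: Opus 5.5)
Your proposal is correct and follows the route the paper relies on. The paper defers to Lemma~3.5 of \cite{mcdonald2020exponential} and notes that the argument rests on integrating the pointwise bound of Proposition~\ref{DUI} against $L_t(\mu)$: the normaliser cancels, and the two remaining terms give $\|\mu-\nu\|_{\mathrm{TV}}$ and $\|L_t(\mu-\nu)\|_{\mathrm{TV}}\le\|L_t\|\,\|\mu-\nu\|_{\mathrm{TV}}$.

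One small correction: the supplement actually uses $\|\lambda\|_{\mathrm{TV}}=\tfrac12|\lambda|(\mathcal X)$, not $|\lambda|(\mathcal X)$, so it is your homogeneity remark that carries the argument. That remark is valid because Proposition~\ref{DUI} also holds with $|U_{\mathrm{y_t}}(\mu)-U_{\mathrm{y_t}}(\nu)|(\mathcal X)$ on the left, by the same algebraic decomposition. Plugging the paper's half-normalised version in directly would only give the constant $2(1+\|L_t\|)$.
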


\begin{proof}
See Lemma~3.5 in \cite{mcdonald2020exponential}.
\end{proof}

Although Lemma~\ref{lem:Emu} is stated for deterministic $\mu,\nu\in\mP(\mX)$, it
applies pathwise to random probability measures of the form
\(\mu_{Y_{1:t-1}}\) and \(\nu_{Y_{1:t-1}}\). Indeed, the proof relies on the
pointwise estimate of Proposition~\ref{DUI}. Hence, after fixing a realization
\(Y_{1:t-1}=y_{1:t-1}\), the bound holds for the corresponding measures
\(\mu_{y_{1:t-1}}\) and \(\nu_{y_{1:t-1}}\). It allows the
estimate to be used conditionally in the filtering recursion. We omit the details here, as an analogous argument is detailed in Proposition~\ref{prop:ExTV} and Lemma~\ref{lemma:ExUpd} where we derive a refined estimate.

Indeed, Lemma~\ref{lem:Emu} yields the standard route to stability estimates; see, e.g., \cite{mcdonald2020exponential}.
It is a useful result, but it can be refined. The key mechanism is the cancellation of the Bayes normalizing constant, which removes the contribution of the link operator from the resulting estimate. Consequently, the propagation of the filtering error is governed solely by the operators \(K_t\) and parallels the mean-measure recursion of Theorem~\ref{thm:mean_filter}.

\begin{proposition}\label{prop:ExTV}
Let $\mu,\nu\in \mP(\mX)$. Then, for every $y_t \in \mY$,
\[
\|U_{y_t}(\mu)-U_{y_t}(\nu)\|_{\mathrm{TV}}
\leq \frac{1}{\mu(g_{y_t})}
\int_{\mX} g_{y_t}(x)\,\abs{\mu-\nu}(\mathrm d x).
\]
Moreover, it also holds  for 
\(\mu_{Y_{1:t-1}}\), \(\nu_{Y_{1:t-1}}\).
\end{proposition}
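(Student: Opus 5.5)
The plan is to make the error in the update step linear in the perturbation, using the shared likelihood. The other ingredient is that $U_{y_t}(\mu)-U_{y_t}(\nu)$ has zero total mass. I will work with the convention $\|m\|_{\mathrm{TV}}=\sup_{A\in\mB(\mX)}|m(A)|$, as in \cite{mcdonald2020exponential}, on which Lemma~\ref{lem:Emu} relies. For a signed measure $m$ with $m(\mX)=0$ this gives $\|m\|_{\mathrm{TV}}=\sup_A m(A)=\sup_A(-m(A))$. Under the convention $\|m\|_{\mathrm{TV}}=|m|(\mX)$ the argument below only yields the weaker constant $2$ (this is what Proposition~\ref{DUI} delivers). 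So the convention choice matters, and I would flag it explicitly.

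\textbf{Step 1 (algebraic decomposition).} Fix $y_t$ and write $g=g_{y_t}$. Set $\lambda:=g\,(\mu-\nu)\in\mbM(\mX)$ and $\rho:=U_{y_t}(\nu)\in\mP(\mX)$. Note that $\lambda(\mX)=\mu(g)-\nu(g)$. A direct computation gives
\[
U_{y_t}(\mu)-U_{y_t}(\nu)=\frac{g\mu-\frac{\mu(g)}{\nu(g)}\,g\nu}{\mu(g)}=\frac{\lambda-\lambda(\mX)\,\rho}{\mu(g)},
\]
because $\frac{\mu(g)}{\nu(g)}g\nu=g\nu+(\mu(g)-\nu(g))\rho$. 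All denominators are positive because $g>0$ by assumption.

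\textbf{Step 2 (one-sided bound via Jordan decomposition).} Write $\lambda=\lambda^+-\lambda^-$, so $\lambda(\mX)=\lambda^+(\mX)-\lambda^-(\mX)$. For any $A$, I bound $m(A):=\lambda(A)-\lambda(\mX)\rho(A)$, splitting into two cases.
\begin{itemize}
\item[] If $\lambda(\mX)\ge0$, then $m(A)\le\lambda(A)\le\lambda^+(\mX)$.
\item[] If $\lambda(\mX)<0$, then using $\rho(A)\le1$ we get $m(A)\le\lambda^+(\mX)+|\lambda(\mX)|=\lambda^-(\mX)$.
\end{itemize}
In both cases $\sup_A m(A)\le|\lambda|(\mX)$. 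Since $m(\mX)=0$, this supremum equals $\sup_A|m(A)|$. Moreover $|\lambda|(\mX)=\int g\,|\mu-\nu|(\sd x)$ because $g\ge0$. Dividing by $\mu(g)$ proves the deterministic claim.

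\textbf{Step 3 (random measures).} The estimate is pointwise in $y_t$ and in the pair of input probability measures. Hence, for random $\mu_{Y_{1:t-1}},\nu_{Y_{1:t-1}}$, I fix a realization $Y_{1:t}(\omega)=y_{1:t}$ and apply Steps 1--2 to the deterministic measures $\mu_{y_{1:t-1}},\nu_{y_{1:t-1}}$ and the deterministic observation $y_t$. This gives the inequality for every $\omega$. Measurability of both sides in $\omega$ follows from their construction as functionals of $Y_{1:t}$ through the filtering operators.

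The main obstacle is Step 2: avoiding the factor $2$ that comes from the naive triangle inequality $|\lambda-\lambda(\mX)\rho|\le|\lambda|+|\lambda(\mX)|$. The remedy is to exploit the zero total mass of the difference, so that only a one-sided supremum has to be controlled, together with the fact that $\rho$ is a probability measure.
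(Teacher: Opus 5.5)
Your proof is correct and is essentially the paper's argument. Your Step~1 identity is exactly the decomposition \eqref{eq:lambda}, and you use the same case split on the sign of $\mu(g_{y_t})-\nu(g_{y_t})$ together with the zero total mass of the difference, which leaves only a one-sided supremum to control. The only difference is in the case $\mu(g_{y_t})<\nu(g_{y_t})$: you keep the single representation and use $\rho(A)\le 1$, whereas the paper switches to the mirrored representation \eqref{eq:-lambda} and then uses $1/\nu(g_{y_t})<1/\mu(g_{y_t})$. Your concern about the norm convention does not arise here. The paper's norm \eqref{eq:TVnorm-def} coincides with $\sup_A|m(A)|$ on zero-mass measures (Proposition~\ref{NPN}), so your argument proves the statement exactly as posed.
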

A proof is provided in Appendix~\ref{proof:ExTV}.

  Proposition~\ref{prop:ExTV} leads to the following refinement of Lemma~\ref{lem:Emu}, in which the dependence on the link operator \(L_t\) disappears.
\begin{lemma}\label{lemma:ExUpd}
Let 
\(\mu_{Y_{1:t-1}}\nu_{Y_{1:t-1}} \in \mP(\mX)\), two random probability measures and let \(\mathbb E^{L_t(\mu_{Y_{1:t-1}})}\) denote expectation w.r.t. \(L_t(\mu_{Y_{1:t-1}})\in\mathcal P(\mathcal Y)\). Then
\[
\mathbb E^{L_t(\mu_{Y_{1:t-1}})}\!\left[
\|U_{Y_t}(\mu_{Y_{1:t-1}})-U_{Y_t}(\nu_{Y_{1:t-1}})\|_{\mathrm{TV}}
\right]
\le
\|\mu_{Y_{1:t-1}}-\nu_{Y_{1:t-1}}\|_{\mathrm{TV}}.
\]
\end{lemma}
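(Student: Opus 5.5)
The plan is to integrate the pointwise bound of Proposition~\ref{prop:ExTV} against the density of \(L_t(\mu_{Y_{1:t-1}})\), so that the Bayes normalising constant cancels exactly as in the proof of Theorem~\ref{thm:mean_filter}. I first work pathwise: fix a realization \(Y_{1:t-1}=y_{1:t-1}\) and write \(\mu:=\mu_{y_{1:t-1}}\), \(\nu:=\nu_{y_{1:t-1}}\). These are deterministic elements of \(\mathcal P(\mathcal X)\). By construction, \(L_t(\mu)\) has Lebesgue density \(\mathrm{y_t}\mapsto\mu(g_{\mathrm{y_t}})\) on \(\mathcal Y\). Hence
\[
\mathbb E^{L_t(\mu)}\!\left[\|U_{Y_t}(\mu)-U_{Y_t}(\nu)\|_{\mathrm{TV}}\right]
=\int_{\mathcal Y}\|U_{\mathrm{y_t}}(\mu)-U_{\mathrm{y_t}}(\nu)\|_{\mathrm{TV}}\,\mu(g_{\mathrm{y_t}})\,\mathrm d\mathrm{y_t}.
\]

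Next, insert the bound of Proposition~\ref{prop:ExTV} into the integrand. Since \(g_t>0\), we have \(\mu(g_{\mathrm{y_t}})>0\) for every \(\mathrm{y_t}\), so the division is legitimate (Proposition~\ref{prop:normalizing_constant_positive} would also cover this \(L_t(\mu)\)-a.e.). The factor \(1/\mu(g_{\mathrm{y_t}})\) then cancels the density, and the right-hand side is at most
\[
\int_{\mathcal Y}\int_{\mathcal X} g_{\mathrm{y_t}}(x)\,|\mu-\nu|(\mathrm dx)\,\mathrm d\mathrm{y_t}.
\]
The integrand is nonnegative, so Tonelli's theorem lets me swap the integrals. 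The normalisation \eqref{eq:g_normalized} gives \(\int_{\mathcal Y} g_t(\mathrm{y_t}\mid x)\,\mathrm d\mathrm{y_t}=1\) for every \(x\). The expression therefore reduces to \(|\mu-\nu|(\mathcal X)\), which is \(\|\mu-\nu\|_{\mathrm{TV}}\) in the paper's convention. The convention must be the same one used in Proposition~\ref{prop:ExTV}, so that constants match; if the half-total-variation normalisation is used on both sides, the factor is consistent.

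Finally, I lift the pathwise statement to the random measures. The map \(y_{1:t-1}\mapsto(\mu_{y_{1:t-1}},\nu_{y_{1:t-1}})\) is measurable because both measures are built from the filtering operators. The expectation \(\mathbb E^{L_t(\mu_{Y_{1:t-1}})}\) is then understood as integration in \(\mathrm{y_t}\) against the random density \(\mu_{Y_{1:t-1}}(g_{\mathrm{y_t}})\) with \(Y_{1:t-1}\) frozen. The inequality holds for every realization, hence almost surely. I expect the main obstacle to be this measurability and interpretation issue rather than the computation: one has to check that \(\|U_{\mathrm{y_t}}(\mu_{y_{1:t-1}})-U_{\mathrm{y_t}}(\nu_{y_{1:t-1}})\|_{\mathrm{TV}}\) is jointly measurable in \((y_{1:t-1},\mathrm{y_t})\), so that the Tonelli step is valid. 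I would handle this by expressing the total-variation norm as a supremum over a countable determining class of test functions. When \(\mu_{Y_{1:t-1}}\) is the true filter, the resulting bound coincides with the conditional expectation given \(\mathcal G_{t-1}\), by Proposition~\ref{prop:Ypdf} and Remark~\ref{remark:likelihood_identity}.
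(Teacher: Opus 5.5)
Your argument is the paper's proof step for step: the density $\mu(g_{\mathrm{y_t}})$ of $L_t(\mu)$, the pointwise bound of Proposition~\ref{prop:ExTV}, cancellation, Tonelli, and normalisation of $g_t$. It fails at exactly the point you flagged and then waved through, namely identifying $|\mu-\nu|(\mathcal X)$ with $\|\mu-\nu\|_{\mathrm{TV}}$. Under the paper's normalisation \eqref{eq:TVnorm-def} (cf.\ Proposition~\ref{NPN}), $\|\lambda\|_{\mathrm{TV}}=\lambda^+(\mathcal X)=\tfrac12|\lambda|(\mathcal X)$ for zero-mass $\lambda$. Your chain therefore ends at $|\mu-\nu|(\mathcal X)=2\|\mu-\nu\|_{\mathrm{TV}}$.

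Changing convention does not rescue this. Proposition~\ref{prop:ExTV} is proved, and true, with the half normalisation on its left and the total-variation measure $|\mu-\nu|$ on its right. With $\|\lambda\|=|\lambda|(\mathcal X)$ on the left it is false: take $\mu=\delta_a$, $\nu=\delta_b$ and $g_{\mathrm{y_t}}(a)>g_{\mathrm{y_t}}(b)$; the left side is $2$ and the right side is $1+g_{\mathrm{y_t}}(b)/g_{\mathrm{y_t}}(a)<2$. So this route only gives the constant $2$, which is no better than the $(1+\|L_t\|)$ of Lemma~\ref{lem:Emu}. The paper's own proof makes the same factor-two slip in its last line.

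Moreover, no argument can give the constant $1$, because the inequality is false. Take deterministic measures (e.g.\ $t=1$) on $\mathcal X=\{x_1,x_2,x_3,x_4\}$ with $\nu=\tfrac1{10}(4,1,4,1)$ and $\mu=\tfrac1{10}(4,0,4,2)$, so that $\|\mu-\nu\|_{\mathrm{TV}}=\tfrac1{10}$. Let the observation reveal whether $x\in\{x_1,x_2\}$.
\begin{itemize}
\item Under $L_t(\mu)$ this happens with probability $\tfrac4{10}$. Then $U(\mu)=\delta_{x_1}$ and $U(\nu)=(\tfrac45,\tfrac15,0,0)$, at distance $\tfrac15$.
\item Otherwise, with probability $\tfrac6{10}$, $U(\mu)=(0,0,\tfrac23,\tfrac13)$ and $U(\nu)=(0,0,\tfrac45,\tfrac15)$, at distance $\tfrac2{15}$.
\end{itemize}
The expected distance is $\tfrac4{25}>\tfrac1{10}$.

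To meet the paper's assumptions, take $Y=\mathbf 1\{x\in\{x_3,x_4\}\}+\sigma Z$ with $Z\sim\mathcal N(0,1)$. This likelihood is strictly positive and Lebesgue-normalised. A direct computation bounds the expectation below by $0.16\,\mathbb P(\sigma Z<\tfrac12)$, which exceeds $\tfrac1{10}$ at $\sigma=\tfrac12$. With $\nu=\tfrac12(1-\eta,\eta,1-\eta,\eta)$ and $\mu=(\tfrac{1-\eta}2,0,\tfrac{1-\eta}2,\eta)$, the ratio is $2(1-\eta)$, so $2$ is the sharp constant. The measurability issue you singled out as the main obstacle is therefore not the real difficulty; the constant is.
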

A proof is provided in Appendix~\ref{proof:ExUTV}.

The previous lemma establishes a non-expansive property of the update operators \(U_t\) in \(L_t(\mu_{Y_{1:t-1}})\)-expectation for arbitrary random probability measures \(\mu_{Y_{1:t-1}},\nu_{Y_{1:t-1}}\in\mathcal P(\mX)\). In the filtering recursion, however, it is useful to exploit the probabilistic structure induced by the observation process. Therefore, the natural choice is
\(
\mu_{Y_{1:t-1}}=\xi_{Y_{1:t-1}},
\)
the predictive measure associated with the optimal filter. Because the observations are generated according to the law
\(
L_t(\xi_{Y_{1:t-1}}),
\)
the expectation in Lemma~\ref{lemma:ExUpd} becomes a conditional expectation given the observation history. As a consequence the one-step propagation of the filtering error is determined entirely by the predictive dynamics, as shown next.
\begin{lemma}\label{RIE_G}
Let
\(
\xi_{Y_{1:t-1}}
=
K_t(\Phi_{Y_{1:t-1}}(\pi_0))
\) and
\(\mathcal G_{t-1}=\sigma(Y_{1:t-1})\). Then, for any  \(\nu_{Y_{1:t-1}}\in\mP(\mX)\)
    \[
\mathbb{E}\!\left[
\|U_{Y_t}(\xi_{Y_{1:t-1}})-U_{Y_t}(\nu_{Y_{1:t-1}})\|_{\mathrm{TV}} \mid \mG_{t-1}
\right]
\leq \|\xi_{Y_{1:t-1}}-\nu_{Y_{1:t-1}}\|_{\mathrm{TV}},
\] 
\end{lemma}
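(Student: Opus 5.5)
The plan is to reduce the conditional expectation to the $L_t(\xi_{Y_{1:t-1}})$-expectation appearing in Lemma~\ref{lemma:ExUpd}, using the identification of the conditional law of $Y_t$ given $\mathcal G_{t-1}$ from Remark~\ref{remark:likelihood_identity} and Proposition~\ref{prop:Ypdf}. The random measures $\xi_{Y_{1:t-1}}=K_t(\Phi_{Y_{1:t-1}}(\pi_0))$ and $\nu_{Y_{1:t-1}}$ are functions of $Y_{1:t-1}$ only. Define the measurable function
\[
f(y_{1:t}) := \|U_{y_t}(\xi_{y_{1:t-1}})-U_{y_t}(\nu_{y_{1:t-1}})\|_{\mathrm{TV}},
\]
which is bounded, since it is at most the total variation norm of a difference of two probability measures. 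Proposition~\ref{prop:Ypdf} then gives
\[
\mathbb E\!\left[f(Y_{1:t})\mid\mathcal G_{t-1}\right]=\int_{\mathcal Y} f(Y_{1:t-1},\mathrm{y_t})\,\xi_{Y_{1:t-1}}(g_{\mathrm{y_t}})\,\mathrm d\mathrm{y_t}.
\]
By the definition of the link operator, $\xi_{Y_{1:t-1}}(g_{\mathrm{y_t}})\,\mathrm d\mathrm{y_t}$ is exactly $L_t(\xi_{Y_{1:t-1}})(\mathrm d\mathrm{y_t})$. Hence the right-hand side equals $\mathbb E^{L_t(\xi_{Y_{1:t-1}})}[\|U_{Y_t}(\xi_{Y_{1:t-1}})-U_{Y_t}(\nu_{Y_{1:t-1}})\|_{\mathrm{TV}}]$, with the history frozen at $Y_{1:t-1}$.

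Next, I fix a realization $Y_{1:t-1}=y_{1:t-1}$ and apply Lemma~\ref{lemma:ExUpd} with $\mu=\xi_{y_{1:t-1}}$ and $\nu=\nu_{y_{1:t-1}}$. Both are deterministic probability measures once the history is fixed. The lemma bounds the integral by $\|\xi_{y_{1:t-1}}-\nu_{y_{1:t-1}}\|_{\mathrm{TV}}$. Evaluating this at the random history yields the claim almost surely.

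If one prefers to avoid quoting Lemma~\ref{lemma:ExUpd}, the same bound follows directly from Proposition~\ref{prop:ExTV}. Inserting its pointwise estimate into the integral, the factor $1/\xi_{Y_{1:t-1}}(g_{\mathrm{y_t}})$ cancels against the density $\xi_{Y_{1:t-1}}(g_{\mathrm{y_t}})$, which is strictly positive because $g_t>0$. Tonelli's theorem then gives
\[
\int_{\mathcal X}\int_{\mathcal Y} g_{\mathrm{y_t}}(x)\,\mathrm d\mathrm{y_t}\,|\xi_{Y_{1:t-1}}-\nu_{Y_{1:t-1}}|(\mathrm dx)=\|\xi_{Y_{1:t-1}}-\nu_{Y_{1:t-1}}\|_{\mathrm{TV}},
\]
by the normalization \eqref{eq:g_normalized}. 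This is the same cancellation mechanism as in Theorem~\ref{thm:mean_filter}.

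The main obstacle is measurability. We need the map $(y_{1:t-1},\mathrm{y_t})\mapsto f$ to be jointly measurable, so that Proposition~\ref{prop:Ypdf} applies and the pathwise bound can legitimately be lifted to a conditional-expectation statement. I would handle this by noting that the total variation norm on a Borel space can be written as a supremum over a countable generating class of test functions. Each $U_{y_t}(\xi_{y_{1:t-1}})(h)$ is a ratio of measurable functions with a strictly positive denominator, so $f$ is a countable supremum of measurable functions and is therefore measurable.
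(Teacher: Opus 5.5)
Your reduction is the paper's own: Proposition~\ref{prop:Ypdf} turns $\mbE[\,\cdot\mid\mathcal G_{t-1}]$ into integration against $L_t(\xi_{Y_{1:t-1}})$, and then Lemma~\ref{lemma:ExUpd} (equivalently, Proposition~\ref{prop:ExTV} plus cancellation of the normalizing constant) is applied with the history frozen. The step that fails is the final identity of your direct route. It is also the last line of the paper's proof of Lemma~\ref{lemma:ExUpd}, so citing that lemma does not help. Write $\xi,\nu$ for $\xi_{Y_{1:t-1}},\nu_{Y_{1:t-1}}$. Tonelli and \eqref{eq:g_normalized} give
\[
\int_{\mY}\int_{\mX} g_{y}(x)\,|\xi-\nu|(\mathrm dx)\,\mathrm dy=|\xi-\nu|(\mX)=2\,\|\xi-\nu\|_{\mathrm{TV}}.
\]
The last equality holds because, with the definition \eqref{eq:TVnorm-def}, a zero-mass $\lambda$ has $\|\lambda\|_{\mathrm{TV}}=\lambda^+(\mX)=\tfrac12|\lambda|(\mX)$ (Proposition~\ref{NPN}). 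The left side of Proposition~\ref{prop:ExTV} is the half-mass norm, while its right side, once integrated against $L_t(\xi)$, is the full mass. Hence your argument only proves $\mbE[\|U_{Y_t}(\xi)-U_{Y_t}(\nu)\|_{\mathrm{TV}}\mid\mathcal G_{t-1}]\le 2\|\xi-\nu\|_{\mathrm{TV}}$. This is the worst case of the $(1+\|L_t\|)$ bound in Lemma~\ref{lem:Emu}.

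The factor $2$ cannot be removed, so the statement as written is false. No refinement of the cancellation step, or of your (otherwise fine) measurability argument, will rescue it. Take $t=1$, distinct points $c,p,q\in\mX$, $K_1(x,\cdot)=\xi:=(1-\tau)\delta_c+\tau\delta_p$ for all $x$, and $\nu:=(1-\tau)\delta_c+\tau\delta_q$, so $\|\xi-\nu\|_{\mathrm{TV}}=\tau$. Let the observation equal $1$ with probability $\tfrac12,1,0$ given $X_1=c,p,q$ respectively, and $2$ otherwise. Then $\mathbb P(Y_1=1)=\tfrac{1+\tau}{2}$. On $\{Y_1=1\}$ the two posteriors are $\tfrac{1-\tau}{1+\tau}\delta_c+\tfrac{2\tau}{1+\tau}\delta_p$ and $\delta_c$. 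On $\{Y_1=2\}$ they are $\delta_c$ and $\tfrac{1-\tau}{1+\tau}\delta_c+\tfrac{2\tau}{1+\tau}\delta_q$. Therefore
\[
\mbE\big[\|U_{Y_1}(\xi)-U_{Y_1}(\nu)\|_{\mathrm{TV}}\big]=\frac{2\tau}{1+\tau}>\tau=\|\xi-\nu\|_{\mathrm{TV}},\qquad 0<\tau<1 .
\]
The standing assumptions do not exclude this example. Replace the two-valued observation by the strictly positive Lebesgue densities $g(\cdot\mid p)=\phi(\cdot+R)$, $g(\cdot\mid q)=\phi(\cdot-R)$ and $g(\cdot\mid c)=\tfrac12\big(\phi(\cdot+R)+\phi(\cdot-R)\big)$, with $\phi$ the standard normal density. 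Letting $R\to\infty$ recovers the same value. Moreover, the ratio tends to $2$ as $\tau\to0$, so the constant $2$ is sharp. What is true, and what your argument actually establishes, is the inequality with right-hand side $|\xi-\nu|(\mX)=2\|\xi-\nu\|_{\mathrm{TV}}$.
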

\begin{proof}
By Proposition~\ref{prop:Ypdf}, the measure
\(
L_t(\xi_{Y_{1:t-1}})(\mathrm d\mathrm{y_t})
=
\xi_{Y_{1:t-1}}(g_{\mathrm{y_t}})\,\mathrm d\mathrm{y_t}
\)
coincides with the conditional pdf of \(Y_t\) given \(\mathcal G_{t-1}\). Hence, for any
\(\nu_{Y_{1:t-1}}\in\mP(\mX)\),
\begin{align}
\mathbb E^{L_t(\xi_{Y_{1:t-1}})}
[
\|U_{Y_t}(\xi_{Y_{1:t-1}})
-
U_{Y_t}(\nu_{Y_{1:t-1}}&)\|_{\mathrm{TV}}
]= \label{eq:E_Gt-1} \\ 
&\mathbb E[
\|U_{Y_t}(\xi_{Y_{1:t-1}})
-
U_{Y_t}(\nu_{Y_{1:t-1}})\|_{\mathrm{TV}}
\,|\,
\mathcal G_{t-1} \notag
].
\end{align}
Applying Lemma~\ref{lemma:ExUpd} with
\(\mu_{Y_{1:t-1}}=\xi_{Y_{1:t-1}}\), and using Eq.~\eqref{eq:E_Gt-1} we have
\[
\mathbb E\!\left[
\|U_{Y_t}(\xi_{Y_{1:t-1}})
-
U_{Y_t}(\nu_{Y_{1:t-1}})\|_{\mathrm{TV}}
\,\middle|\,
\mathcal G_{t-1}
\right]
\leq
\|\xi_{Y_{1:t-1}}-\nu_{Y_{1:t-1}}\|_{\mathrm{TV}},
\]
which proves the claim.
\end{proof}
Iterating the resulting previous estimate yields the following stability bound.
\begin{theorem}\label{SRF_TB}
Let $\pi_0,\widehat{\pi}_0\in\mathcal P(\mathcal X)$ and define
\(
a_0:=\|\pi_0-\widehat{\pi}_0\|_{\mathrm{TV}}.
\)
Then, for every $t\ge 1$,
\[
\mathbb E[
\|
\Phi_{Y_{1:t}}(\pi_0)
-
\Phi_{Y_{1:t}}(\widehat{\pi}_0)
\|_{\mathrm{TV}}
]
\le
a_0
\prod_{i=1}^{t}\|K_i\|
\le
a_0
\exp\big(
-\sum_{i=1}^{t}\bigl(1-\|K_i\|\bigr)
\big).
\]
Consequently, a sufficient condition for the random operator sequence
$\{\Phi_{Y_t}\}_{t\ge1}$ to be stable is
\(
\lim_{t\to \infty}\sum_{i=1}^{t}\bigl(1-\|K_i\|\bigr)
= \infty.\)
In particular, stability holds whenever
\(
C_K<1.
\)
\end{theorem}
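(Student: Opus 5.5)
The argument is a one-step contraction estimate in conditional expectation, iterated with the tower property. Set
\[
a_t:=\mathbb E\bigl[\|\pi_{Y_{1:t}}-\widehat\pi_{Y_{1:t}}\|_{\mathrm{TV}}\bigr],
\qquad \pi_{Y_{1:t}}=\Phi_{Y_{1:t}}(\pi_0),\quad \widehat\pi_{Y_{1:t}}=\Phi_{Y_{1:t}}(\widehat\pi_0).
\]
Write $\xi_{Y_{1:t-1}}=K_t(\pi_{Y_{1:t-1}})$ and $\widehat\xi_{Y_{1:t-1}}=K_t(\widehat\pi_{Y_{1:t-1}})$. Both are $\mathcal G_{t-1}$-measurable random probability measures. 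Moreover, $\xi_{Y_{1:t-1}}$ is exactly the predictive measure of the true filter, and $Y_t$ is generated under the true model started at $\pi_0$. Hence Lemma~\ref{RIE_G} applies with $\nu_{Y_{1:t-1}}=\widehat\xi_{Y_{1:t-1}}$ and gives
\[
\mathbb E\bigl[\|\pi_{Y_{1:t}}-\widehat\pi_{Y_{1:t}}\|_{\mathrm{TV}}\mid\mathcal G_{t-1}\bigr]
\le \|K_t(\pi_{Y_{1:t-1}})-K_t(\widehat\pi_{Y_{1:t-1}})\|_{\mathrm{TV}}.
\]
By linearity of $K_t$ on $\mathcal D(\mathcal X)$ and \eqref{ineq:Psi_bound}, applied pathwise, the right-hand side is at most $\|K_t\|\,\|\pi_{Y_{1:t-1}}-\widehat\pi_{Y_{1:t-1}}\|_{\mathrm{TV}}$.

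Taking expectations via the tower property gives $a_t\le \|K_t\|\,a_{t-1}$. Since $\pi_0$ and $\widehat\pi_0$ are deterministic, induction yields $a_t\le a_0\prod_{i=1}^t\|K_i\|$. For the second inequality, use $0\le\|K_i\|\le1$ and the elementary bound $x\le e^{-(1-x)}$ on $[0,1]$, which follows from convexity of $e^{x-1}$ and its tangent line at $x=1$. Multiplying over $i$ gives $\prod_i\|K_i\|\le\exp\bigl(-\sum_{i=1}^t(1-\|K_i\|)\bigr)$. If the series $\sum_i(1-\|K_i\|)$ diverges, the right-hand side tends to zero, which is stability in expectation in the sense of Definition~\ref{def:Ex_St}. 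If $C_K<1$, each term satisfies $1-\|K_i\|\ge 1-C_K>0$, so the sum is at least $t(1-C_K)\to\infty$.

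The main obstacle is justifying that Lemma~\ref{RIE_G} may be applied conditionally. The identification of $L_t(\xi_{Y_{1:t-1}})$ with the conditional law of $Y_t$ given $\mathcal G_{t-1}$ (Proposition~\ref{prop:Ypdf}) holds only because the observations come from the true model with prior $\pi_0$. The argument is therefore asymmetric: the role of $\xi$ must always be played by the filter started at the true prior, and $\widehat\pi_0$ is treated merely as an arbitrary initialization of the same operators. Since the definition of stability requires the limit for every pair of priors, I would read the observations as generated with initial law $\pi_0$ and note that the roles can be swapped. I would also check measurability of $\|\pi_{Y_{1:t}}-\widehat\pi_{Y_{1:t}}\|_{\mathrm{TV}}$ as a function of $Y_{1:t}$. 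This follows because both measures have densities, with respect to $\xi$ and $\widehat\xi$, that are jointly measurable in the observations.
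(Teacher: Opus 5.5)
\textbf{Gap: the one-step contraction you take from Lemma~\ref{RIE_G} is false, and so is the bound you are proving.} Your argument follows the paper's proof step for step: Lemma~\ref{RIE_G} with $\nu_{Y_{1:t-1}}=K_t(\widehat\pi_{Y_{1:t-1}})$, then \eqref{ineq:Psi_bound}, the tower property, induction, and $x\le e^{x-1}$. The step you treat as routine, the conditional non-expansiveness of the Bayes update, does not hold. The update can increase total variation even in conditional expectation under the true predictive law, and the displayed inequality of Theorem~\ref{SRF_TB} fails already at $t=1$.

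Here is a counterexample. Take $\mathcal X=\{0,1,2\}$ and $K_1$ the identity kernel, so $\|K_1\|=1$. Set $\pi_0=(1-\tau)\delta_0+\tau\delta_1$ and $\widehat\pi_0=(1-\tau)\delta_0+\tau\delta_2$ with $\tau\in(0,1)$, so $a_0=\tau$. Let $g_1(\cdot\mid 1)=u$, $g_1(\cdot\mid 2)=v$ and $g_1(\cdot\mid 0)=\tfrac12(u+v)$, where $u,v$ are the $\mathcal N(-1,1)$ and $\mathcal N(1,1)$ densities; these likelihoods are strictly positive and normalized. Write $m(y)=\max\{u(y),v(y)\}/(u(y)+v(y))\in[\tfrac12,1)$. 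A direct computation on the three atoms gives
\[
\|U_y(\pi_0)-U_y(\widehat\pi_0)\|_{\mathrm{TV}}
=\frac{2\tau\,m(y)}{1-\tau+2\tau\,m(y)}>\tau
\qquad\text{for every } y\neq 0 .
\]
Since $Y_1$ has a density, $\mathbb E\bigl[\|\Phi_{Y_1}(\pi_0)-\Phi_{Y_1}(\widehat\pi_0)\|_{\mathrm{TV}}\bigr]>a_0\|K_1\|$. The observations here are generated from $\pi_0$, exactly as you intend, so the issues you flag (which prior generates the data, measurability) are not where the argument breaks.

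The counterexample also survives contraction. Using $K_1(x,\cdot)=(1-\kappa)\delta_x+\kappa\delta_0$ gives $\|K_1\|=1-\kappa<1$, and the same violation occurs with $\tau$ replaced by $(1-\kappa)\tau$. If $u,v$ are nearly singular and $\tau\to0$, the ratio of the left side to $a_0\|K_1\|$ tends to $2$.

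The paper's error is in the last line of the proof of Lemma~\ref{lemma:ExUpd}. After cancelling $\mu(g_y)$ and applying Fubini, one obtains
\[
\int_{\mathcal X}\int_{\mathcal Y}g_y(x)\,\mathrm dy\,|\mu-\nu|(\mathrm dx)=|\mu-\nu|(\mathcal X).
\]
Under the paper's normalization (Proposition~\ref{NPN}), this equals $2\|\mu-\nu\|_{\mathrm{TV}}$, not $\|\mu-\nu\|_{\mathrm{TV}}$. The example shows that no constant equal to $1$ can be recovered.

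What survives is Lemma~\ref{lem:Emu} applied conditionally with $\mu=\xi_{Y_{1:t-1}}$. It gives $\mathbb E[E_t\mid\mathcal G_{t-1}]\le(1+\|L_t\|)\|K_t\|\,E_{t-1}$, and hence $a_t\le a_0\prod_{i=1}^t(1+\|L_i\|)\|K_i\|$. This route yields stability under a condition such as $(1+C_L)C_K<1$, but not under $C_K<1$ alone. To make your proof correct, you must replace the one-step factor $\|K_t\|$ by $(1+\|L_t\|)\|K_t\|$ and weaken the stated bound and sufficient conditions accordingly.
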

\begin{proof}
Recall that
\(
\pi_{Y_{1:t}}
=
U_{Y_t}\!\left(K_t(\pi_{Y_{1:t-1}})\right),
\;
\widehat\pi_{Y_{1:t}}
=
U_{Y_t}\!\left(K_t(\widehat\pi_{Y_{1:t-1}})\right), 
\) and let \(\mathcal G_{t-1}=\sigma(Y_{1:t-1})\). By the tower property of conditional expectation 
\begin{align*}
    \mathbb E\!\left[
\|\pi_{Y_{1:t}}-\widehat{\pi}_{Y_{1:t}}\|_{\mathrm{TV}}
\right]
&=
\mathbb E\!\left[
\mathbb E\!\left[
\|U_{Y_t}\!\left(K_t(\pi_{Y_{1:t-1}})\right)-U_{Y_t}\!\left(K_t(\widehat\pi_{Y_{1:t-1}})\right)\|_{\mathrm{TV}}
\,\middle|\,
\mathcal G_{t-1}
\right]
\right].
\end{align*}
Applying Lemma~\ref{RIE_G} and the bound in \eqref{ineq:Psi_bound}, we obtain
\[\mathbb E\!\left[
\|\pi_{Y_{1:t}}-\widehat{\pi}_{Y_{1:t}}\|_{\mathrm{TV}}
\right]\leq
\|K_t\|\,
\mathbb E\!\left[
\|\pi_{Y_{1:t-1}}-\widehat{\pi}_{Y_{1:t-1}}\|_{\mathrm{TV}}
\right]\]
Iterating this inequality from \(1\) to \(t\) yields
\[
\mathbb E\!\left[
\|\pi_{Y_{1:t}}-\widehat{\pi}_{Y_{1:t}}\|_{\mathrm{TV}}
\right]
\leq
\prod_{i=1}^t
\|K_i\|\,
\|\pi_0-\widehat{\pi}_0\|_{\mathrm{TV}}.
\]
Define the Dobrushin's ergodicity coefficient 
\(
\alpha(K_i):=1-\|K_i\|, \; i\geq 1.
\)
Since \(1-x\leq e^{-x}\) that holds \(\forall x\in\mathbb R\), it follows that
\[
\|K_i\|
=
1-\alpha(K_i)
\leq
\exp\!\big(-\alpha(K_i)\big).
\]
Moreover, the uniform estimate \(\|K_i\|\leq C_K\) yields
\[
\alpha(K_i)=1-\|K_i\|\geq 1-C_K,
\]
and hence
\[
\|K_i\|
\leq
\exp\!\big(-\alpha(K_i)\big)
=\exp\!\big(-(1-\|K_i\|)\big)\leq
\exp\!\big(-(1-C_K)\big).
\]
Therefore,
\beq\label{ineq:K_exp}
\prod_{i=1}^t \|K_i\|
\leq
\exp\big(
-\sum_{i=1}^{t}\bigl(1-\|K_i\|\bigr)
\big)
\leq
e^{-(1-C_K)t},
\eeq
which concludes the proof.
\end{proof}
The previous results admit a natural probabilistic interpretation, the filtering error evolves as a supermartingale w.r.t. the observation filtration. 

\begin{remark}\label{rem:super_Martingale}
Let \(\pi_0,\widehat\pi_0 \in \mP(\mX)\), and define the
\(\mG\)-adapted error process
\(\{E_t\}_{t\geq1}\) by
\(
E_t
:=
\|
\Phi_{Y_{1:t}}(\pi_0)
-
\Phi_{Y_{1:t}}(\widehat\pi_0)
\|_{\mathrm{TV}},
\; t\geq1,
\)
where \(\mG:=\{\mathcal G_t\}_{t\geq1}\) denotes the observation filtration.
By Lemma~\ref{RIE_G}, together with \eqref{ineq:Psi_bound} and the bound \(\|K_t\|\leq1\),
\[
\mathbb E\!\left[E_t\,\middle|\,\mathcal G_{t-1}\right]
\leq
\|K_t\|\,E_{t-1}
\leq
E_{t-1}.
\]
Hence,
\(\{E_t\}_{t\geq1}\)
is a non-negative \(\mG\)-supermartingale.\footnote{see \cite{Williams91} for the definition of a supermartingale.}
\end{remark}

The supermartingale property prevents the accumulation of filtering errors and provides the key mechanism driving the almost sure convergence of the filters under mismatched initial conditions. To establish this result, we need the following lemma which establishes several convergence properties for non-negative random variables that are almost surely bounded by 1. In particular, it applies to the stochastic error process
\(\{E_t\}_{t\geq1}\) defined in Remark~\ref{rem:super_Martingale}.

\begin{lemma}\label{lem:Bounded_Conv_Equiv}
Let \(\{X_t\}_{t\geq0}\) be a sequence of non-negative r.v.s that is almost surely uniformly bounded by one, i.e.
\(
0\leq X_t\leq1,
\;\text{a.s. for every } t\geq1.
\)
Then the following statements are equivalent:
\begin{enumerate}[label=\roman*)]
    \item
    \(
    X_t\xrightarrow{\mathbb P}0;
    \)
    
    \item
     \(\exists p_0\geq1\) such that
    \(
    X_t\xrightarrow{L^{p_0}}0;
    \)
    
    \item
    \(\forall p\geq1\),
    \(
    X_t\xrightarrow{L^p}0;
    \)
    \item
     \( \forall p\geq1\),
    \(
    X_t^p\xrightarrow{\mathbb P}0.
    \)
\end{enumerate}
Under any of these equivalent conditions,
\(
\operatorname{Var}(X_t)\longrightarrow0.
\)
Moreover, if
\[
\sum_{t=0}^{\infty}
\mathbb P(X_t\geq\delta)
<\infty,
\quad \forall \delta>0,
\]
then
\(
X_t^p\xrightarrow{\mathrm{a.s.}}0,
\; \forall p\geq1.
\)
\end{lemma}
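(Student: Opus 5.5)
The plan is to prove the cycle of implications i) $\Rightarrow$ iii) $\Rightarrow$ ii) $\Rightarrow$ i), then show iii) $\Leftrightarrow$ iv), and finally treat the variance statement and the almost sure claim separately. The only structural input is the a.s. bound $0\le X_t\le 1$. It makes every moment finite and gives $X_t^p\le X_t$ for $p\ge1$.

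\textbf{Equivalences.} For i) $\Rightarrow$ iii), fix $p\ge1$ and $\delta\in(0,1)$, and split on the event $\{X_t\ge\delta\}$:
\[
\mathbb E[X_t^p]\le \delta^p+\mathbb P(X_t\ge\delta),
\]
because $X_t^p\le1$ on that event. Letting $t\to\infty$ and then $\delta\downarrow0$ gives $X_t\to0$ in $L^p$; equivalently one can invoke bounded convergence in probability. The step iii) $\Rightarrow$ ii) is trivial. For ii) $\Rightarrow$ i), Markov's inequality gives $\mathbb P(X_t\ge\delta)\le\delta^{-p_0}\mathbb E[X_t^{p_0}]\to0$. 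For i) $\Leftrightarrow$ iv), use that $x\mapsto x^p$ is continuous and strictly increasing on $[0,1]$, so $\{X_t^p\ge\delta\}=\{X_t\ge\delta^{1/p}\}$; hence $X_t^p\to0$ in probability exactly when $X_t\to0$ in probability. Note that iv) with $p=1$ is i).

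\textbf{Variance.} We have
\[
\operatorname{Var}(X_t)\le\mathbb E[X_t^2]\to0,
\]
where the limit follows from iii) with $p=2$.

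\textbf{Almost sure claim.} Fix $\delta>0$. The events $A_t=\{X_t\ge\delta\}$ have summable probabilities, so Borel--Cantelli gives $\mathbb P(\limsup_t A_t)=0$. Thus almost surely $X_t<\delta$ for all large $t$. Intersecting over $\delta=1/n$, $n\in\mbN$ (a countable family), gives $X_t\to0$ a.s. By continuity of $x\mapsto x^p$, $X_t^p\to0$ a.s. for every $p\ge1$.

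The only mildly delicate point is the order of limits in i) $\Rightarrow$ iii), where $\delta$ is fixed before $t\to\infty$. No step presents a genuine obstacle.
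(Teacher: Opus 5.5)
Your proof is correct, but it reaches the $L^p$ statements by a different mechanism than the paper.

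The paper observes that an a.s.\ bounded family is uniformly integrable and invokes Vitali's convergence theorem to identify $L^1$ convergence with convergence in probability. It then extends $L^{p_0}$ convergence to all $p\ge1$: for $p>p_0$ it uses $X_t^p\le X_t^{p_0}$, and for $p<p_0$ the Jensen (Lyapunov) comparison $\|X_t\|_{L^p}\le\|X_t\|_{L^{p_0}}$. It derives iv) from $\mathbb P(X_t^p\ge\delta)\le\mathbb P(X_t\ge\delta)$.

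You instead close the cycle i) $\Rightarrow$ iii) $\Rightarrow$ ii) $\Rightarrow$ i) using the truncation bound $\mathbb E[X_t^p]\le\delta^p+\mathbb P(X_t\ge\delta)$ and Markov's inequality. You get i) $\Leftrightarrow$ iv) from the exact identity $\{X_t^p\ge\delta\}=\{X_t\ge\delta^{1/p}\}$. This avoids uniform integrability, Vitali's theorem and any comparison of $L^p$ norms. It also treats every $p$ at once and gives an explicit quantitative estimate. In exchange, the paper's route establishes a slightly more general fact along the way: for such bounded sequences, $L^1$ convergence and convergence in probability coincide for any integrable limit, not only for the limit $0$.

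The variance bound and the Borel--Cantelli argument over a countable family of thresholds match the paper's. The one difference is that you pass from $X_t\to0$ a.s.\ to $X_t^p\to0$ a.s.\ by continuity of $x\mapsto x^p$. The paper instead reruns the Borel--Cantelli argument for $X_t^p$ via the comparison test, so your version is shorter.
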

A proof is provided in Appendix~\ref{proof:Bounded_Conv_Equiv}

  Note that, since \(E_t\) is the total variation distance between probability measures, then \(E_t\in[0,1]\) a.s. for every \(t\geq1\). Consequently, we obtain the following characterization of stability for the sequence $\{\Phi_{Y_t}\}_{t\ge1}$.

\begin{theorem}\label{thm:Stability_ConvProb}
Let \(\{E_t\}_{t\geq1}\) be as in
Remark~\ref{rem:super_Martingale}. Then
\(
\{\Phi_{Y_{t}}\}_{t\geq1}\) is stable,
if and only if, 
\(E_t\xrightarrow{\mathbb P}0.
\) Moreover, under any of these equivalent conditions,
\(
\operatorname{Var}(E_t)\longrightarrow0.
\)
\end{theorem}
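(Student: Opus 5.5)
The plan is to reduce the statement to Lemma~\ref{lem:Bounded_Conv_Equiv}, applied to the error process \(X_t:=E_t\), once for each fixed pair of priors. For a fixed pair \(\pi_0,\widehat\pi_0\in\mathcal P(\mathcal X)\), \(E_t\) is a total variation distance between two probability measures, so \(0\le E_t\le 1\) a.s. for every \(t\ge1\). Moreover \(E_t\) is a measurable function of \(Y_{1:t}\): it is the TV norm of the difference of two random probability measures generated by the operators \(\Phi_{Y_{1:t}}\). Hence the hypotheses of Lemma~\ref{lem:Bounded_Conv_Equiv} hold.

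By Definition~\ref{def:Ex_St}, stability of \(\{\Phi_{Y_t}\}_{t\ge1}\) means that \(\mathbb E[E_t]\to0\) for every pair of priors. Since \(E_t\ge0\), this is exactly \(E_t\xrightarrow{L^1}0\), i.e. condition ii) of Lemma~\ref{lem:Bounded_Conv_Equiv} with \(p_0=1\). That lemma states that ii) is equivalent to i), namely \(E_t\xrightarrow{\mathbb P}0\).

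\emph{Direct argument for the two implications.} Should we prefer not to rely on the lemma's full equivalence chain, both directions are one line each.
\begin{itemize}
\item If \(\mathbb E[E_t]\to0\), then Markov's inequality gives \(\mathbb P(E_t\ge\delta)\le \mathbb E[E_t]/\delta\to0\) for each \(\delta>0\).
\item Conversely, if \(E_t\xrightarrow{\mathbb P}0\), the bound \(E_t\le1\) gives
\[
\mathbb E[E_t]\le \delta+\mathbb P(E_t\ge\delta).
\]
Letting \(t\to\infty\) and then \(\delta\downarrow0\) yields \(\mathbb E[E_t]\to0\).
\end{itemize}

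The quantifier "for every pair of priors" is handled pointwise: the equivalence holds for each fixed pair, hence also for all pairs simultaneously. We read the convergence \(E_t\xrightarrow{\mathbb P}0\) in the statement as holding for every such pair.

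For the variance claim, the last assertion of Lemma~\ref{lem:Bounded_Conv_Equiv} already gives \(\operatorname{Var}(E_t)\to0\). It can also be checked directly:
\[
\operatorname{Var}(E_t)\le \mathbb E[E_t^2]\le \mathbb E[E_t]\to0,
\]
where the second inequality uses \(E_t^2\le E_t\), valid because \(E_t\in[0,1]\).

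No step is a real obstacle. The only point needing care is the measurability and boundedness of \(E_t\), so that it qualifies as a bounded random variable for Lemma~\ref{lem:Bounded_Conv_Equiv}. This follows from the construction of the filters as functions of the observation path, as noted before the statement.
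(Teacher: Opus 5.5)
Your proposal is correct and follows the paper's proof. Like the paper, you identify stability in expectation with \(E_t\xrightarrow{L^1}0\) via Definition~\ref{def:Ex_St}, then invoke Lemma~\ref{lem:Bounded_Conv_Equiv} (using \(0\le E_t\le 1\)) for both the equivalence with convergence in probability and the variance claim. Your direct Markov/truncation arguments and the bound \(\operatorname{Var}(E_t)\le\mathbb E[E_t]\) are valid self-contained substitutes for the lemma, and your explicit handling of the ``for every pair of priors'' quantifier clarifies a point the paper leaves implicit.
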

\begin{proof}
By Definition~\ref{def:Ex_St}, stability in expectation is equivalent to
\(
E_t\xrightarrow{L^1}0.
\)
Therefore, the result
follows directly from Lemma~\ref{lem:Bounded_Conv_Equiv}.
\end{proof}

\begin{remark}\label{rem:E_infty}
By Corollary~11.7 in \cite{Williams91}, the nonnegative supermartingale
\(\{E_t\}_{t\ge1}\) converges almost surely to a finite r.v. \(E_\infty\), i.e.,
\(
E_t \xrightarrow{\mathrm{a.s.}} E_\infty.
\)
Almost sure convergence implies convergence in probability (see Section~13.5 of \cite{Williams91}), it follows that
\(
E_t \xrightarrow{\mathbb P} E_\infty.
\)
Moreover, since \(0\le E_t\le 1\) for all \(t\ge1\), we have
\(
0\le E_\infty\le 1
\)
a.s., and therefore \(E_\infty\in L^1(\mbP)\).
\end{remark}

The discussion in Remark~\ref{rem:E_infty} guarantees the existence of the limit \(E_\infty\), but not that $E_\infty = 0$ a.s. Also, note that by Theorem~\ref{thm:Stability_ConvProb} and \eqref{MM_NC}, failure of mean-measure coalescence implies \(E_t \not\xrightarrow{\mathbb P} 0\).
Likewise, Theorem~\ref{thm:Stability_ConvProb}, together with the argument leading to \eqref{MM_NC}, shows that
 \( \|M_{\pi_{Y_{1:t}}}-M_{\widehat{\pi}_{Y_{1:t}}}\|_{\mathrm{TV}} \not\to 0 \) implies \( E_t \not\to 0, \) a.s.

  Nevertheless, the supermartingale structure established in Remark~\ref{rem:super_Martingale} yields a substantially stronger conclusion. Under the contraction condition
\beq\label{ineq:limsup_K_t}
\limsup_{t\to\infty}\|K_{t}\|<1,
\eeq
we have
\(
E_t \xrightarrow{\mathrm{a.s.}} 0
\)
 and hence also \(
E_t \xrightarrow{\mbP} 0
\). This result is established in the following theorem.

\begin{theorem}\label{thm:Prob_AS}
Let \(E_t\) be the supermartingale defined in Remark~\ref{rem:super_Martingale} and assume  that
\eqref{ineq:limsup_K_t} holds. Then
\(
E_t\xrightarrow{\mathrm{a.s.}} 0.
\) In particular, $\{\Phi_{Y_t}\}_{t\ge1}$ is stable and \(\operatorname{Var}(E_t)\to 0\).
\end{theorem}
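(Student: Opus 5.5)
We combine the supermartingale structure of Remark~\ref{rem:super_Martingale} with the geometric decay of $\mathbb E[E_t]$ from Theorem~\ref{SRF_TB}. Assumption \eqref{ineq:limsup_K_t} gives $\rho<1$ and $t_0\ge1$ such that $\|K_t\|\le\rho$ for all $t\ge t_0$. By the one-step estimate in the proof of Theorem~\ref{SRF_TB}, namely $\mathbb E[E_t]\le\|K_t\|\,\mathbb E[E_{t-1}]$, combined with $\|K_i\|\le1$ for $i<t_0$, we obtain
\[
\mathbb E[E_t]\le a_0\prod_{i=1}^{t}\|K_i\|\le a_0\,\rho^{\,t-t_0+1},\qquad t\ge t_0 .
\]
Hence $\sum_{t\ge1}\mathbb E[E_t]<\infty$. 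This already gives stability in expectation in the sense of Definition~\ref{def:Ex_St}, because $\mathbb E[E_t]\to0$.

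For the almost sure statement we have two routes.

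\emph{First route.} By Markov's inequality, $\mathbb P(E_t\ge\delta)\le \mathbb E[E_t]/\delta$ for every $\delta>0$. Therefore $\sum_t\mathbb P(E_t\ge\delta)<\infty$ for all $\delta>0$. The last part of Lemma~\ref{lem:Bounded_Conv_Equiv} then yields $E_t^p\to0$ a.s.; take $p=1$. The lemma applies because $0\le E_t\le1$.

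\emph{Second route.} Monotone convergence gives $\mathbb E[\sum_t E_t]=\sum_t\mathbb E[E_t]<\infty$. Hence $\sum_t E_t<\infty$ a.s., so $E_t\to0$ a.s.

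The supermartingale convergence in Remark~\ref{rem:E_infty} is consistent with this: it identifies the a.s.\ limit $E_\infty$, and Fatou's lemma gives $\mathbb E[E_\infty]\le\liminf_t\mathbb E[E_t]=0$, so $E_\infty=0$ a.s. Any of these arguments suffices.

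The remaining claims follow from Theorem~\ref{thm:Stability_ConvProb}. Almost sure convergence implies $E_t\xrightarrow{\mathbb P}0$, which is equivalent to stability, and that theorem then gives $\operatorname{Var}(E_t)\to0$. Alternatively, $\operatorname{Var}(E_t)\le\mathbb E[E_t^2]\le\mathbb E[E_t]\to0$ directly, since $E_t\le1$.

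The only point that needs care is that \eqref{ineq:limsup_K_t} controls only the tail of the sequence $\{\|K_t\|\}$, not every term. The product bound must therefore be split at $t_0$, with $\|K_i\|\le1$ used for the first $t_0-1$ factors. No uniform bound $C_K<1$ is needed; the finitely many initial factors only contribute the constant prefactor $\rho^{-t_0+1}$.
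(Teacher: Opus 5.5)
Your proposal is essentially the paper's proof. Markov's inequality plus the summable bound $\mathbb E[E_t]\le a_0\prod_{i=1}^t\|K_i\|$ from Theorem~\ref{SRF_TB} gives $\sum_t\mathbb P(E_t\ge\delta)<\infty$ for every $\delta>0$, and Lemma~\ref{lem:Bounded_Conv_Equiv} turns this into $E_t\to0$ a.s. Your explicit split at $t_0$ is a slightly cleaner substitute for the paper's ratio test, which tacitly needs every $\|K_t\|\neq0$. Your monotone-convergence route is an equally valid shortcut.

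Be aware, though, that every route you give, like the paper's, hinges on the one-step bound $\mathbb E[E_t\mid\mathcal G_{t-1}]\le\|K_t\|E_{t-1}$ behind Theorem~\ref{SRF_TB}. Its source, Lemma~\ref{lemma:ExUpd}, appears to lose a factor $2$: under the paper's normalization $|\mu-\nu|(\mathcal X)=2\|\mu-\nu\|_{\mathrm{TV}}$, but the lemma treats it as $\|\mu-\nu\|_{\mathrm{TV}}$. As a check, take $\mu=(m,0,1-m)$ and $\nu=(0,m,1-m)$ on three states, with likelihoods $\mathcal N(-1,\sigma^2)$, $\mathcal N(1,\sigma^2)$ and their even mixture for states $0,1,2$. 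As $\sigma\to0$, the $L_t(\mu)$-expected updated error tends to $2m/(1+m)$, which exceeds $m=\|\mu-\nu\|_{\mathrm{TV}}$. With the valid estimate of Lemma~\ref{lem:Emu} in its place, this line of argument only delivers the conclusion under a condition like $\limsup_t(1+\|L_t\|)\|K_t\|<1$.
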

\begin{proof}
Let $\delta>0$, then by By Markov's inequality,
\(
\mathbb P(E_t\geq \delta)
\leq
\frac{\mathbb E[E_t]}{\delta}.
\) Now, let us define  $\mK_t:=\prod_{i=1}^t \|K_i\|$. By Theorem \ref{SRF_TB} we have that
\beq\label{ineq:serie_K_t}
\sum_{t=0}^{\infty}\mathbb P(E_t\geq \delta)
\leq
\frac{\|\pi_0-\widehat{\pi}_0\|_{\mathrm{TV}}}{\delta}
\sum_{t=0}^{\infty} \mK_t.
\eeq
By the ratio test (Theorem~8.25 in \cite{apostol1958mathematical}), the series on the right-hand of \eqref{ineq:serie_K_t} converges absolutely whenever
\[
\limsup_{t\to\infty}
\left|
\frac{\mK_{t}}{\mK_{t-1}}
\right|
=
\limsup_{t\to\infty}
\|K_{t}\|
<1.
\]
Consequently, by assumption
\(
\sum_{t=0}^{\infty}\mK_t<\infty,
\)
which, in particular, implies that
\(
\mK_t\rightarrow 0.
\)
Hence, by Theorem~\ref{SRF_TB}  the sequence of random filtering operators
\(\{\Phi_{Y_{t}}\}_{t\geq1}\) is stable. Thus, by Theorem~\ref{thm:Stability_ConvProb},
\(
E_t
\xrightarrow{\mathbb P}
0,
\)  and \(\operatorname{Var}(E_t)\to 0\).
 We now strengthen this conclusion. 
 
   Since
\(
\sum_{t=0}^{\infty}\mathbb P(E_t\geq \delta)
<
\infty,
\) as it is bounded above by a convergent series, Lemma~\ref{lem:Bounded_Conv_Equiv} ensures that \(E_t\xrightarrow{\mathrm{a.s.}} 0.\)  
\end{proof}

Observe that the assumptions of Theorem~\ref{thm:Prob_AS} are
automatically fulfilled under the stronger condition
\(C_K<1\). This yields the following immediate consequence.
\begin{corollary}\label{cor:AS_uniform_contraction}
Assume that
\(
C_K<1.
\)
Then, 
\(\forall 
\pi_0,\widehat{\pi}_0\in\mathcal P(\mathcal X),
\)
\(
E_t\xrightarrow{\mathrm{a.s.}}0.\) In particular, $\{\Phi_{Y_t}\}_{t\ge1}$ is stable and \(\operatorname{Var}(E_t)\to 0\).  
\end{corollary}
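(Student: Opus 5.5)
The plan is to derive the corollary directly from Theorem~\ref{thm:Prob_AS} by checking that its single hypothesis, condition \eqref{ineq:limsup_K_t}, follows from the uniform bound $C_K<1$.

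By Remark~\ref{rem:UBT_LP}, $C_K$ is an upper bound for $\sup_{t\ge1}\|K_t\|$. Hence $\|K_t\|\le C_K$ for every $t\ge1$, and therefore
\[
\limsup_{t\to\infty}\|K_t\|\le\sup_{t\ge1}\|K_t\|\le C_K<1,
\]
which is exactly \eqref{ineq:limsup_K_t}.

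Fix arbitrary priors $\pi_0,\widehat\pi_0\in\mathcal P(\mathcal X)$ and let $E_t$ be the error process of Remark~\ref{rem:super_Martingale}. Theorem~\ref{thm:Prob_AS} then yields three conclusions at once: $E_t\xrightarrow{\mathrm{a.s.}}0$, stability of $\{\Phi_{Y_t}\}_{t\ge1}$, and $\operatorname{Var}(E_t)\to0$. The priors were arbitrary, so the almost sure statement holds for every pair.

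The quantitative route gives an independent check. Theorem~\ref{SRF_TB} gives $\mathbb E[E_t]\le a_0e^{-(1-C_K)t}$, so stability in expectation follows directly. For the almost sure convergence, Markov's inequality gives, for each $\delta>0$,
\[
\sum_t\mathbb P(E_t\ge\delta)\le\frac{a_0}{\delta}\sum_t e^{-(1-C_K)t}<\infty.
\]
Since $0\le E_t\le1$, Lemma~\ref{lem:Bounded_Conv_Equiv} then gives $E_t\xrightarrow{\mathrm{a.s.}}0$, and the same lemma gives the variance statement.

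The proof has no real obstacle. The only point needing care is the inequality $\limsup\le\sup$, together with the fact that $C_K$ bounds the supremum over all $t\ge1$ and not merely eventually.
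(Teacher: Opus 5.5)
Your proposal is correct and follows the paper's route: the paper presents this corollary as an immediate consequence of Theorem~\ref{thm:Prob_AS}, on the grounds that $C_K<1$ forces $\limsup_{t\to\infty}\|K_t\|<1$. Your ``independent check'' is essentially the argument inside the proof of Theorem~\ref{thm:Prob_AS}, except that you use the explicit geometric bound $e^{-(1-C_K)t}$ from Theorem~\ref{SRF_TB} in place of the ratio test, which is slightly cleaner because it avoids dividing by a possibly vanishing product $\prod_{i\le t-1}\|K_i\|$.
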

Note that the argument used in the proof of
Theorem~\ref{thm:Prob_AS} shows that the uniform contraction condition
in Theorem~\ref{SRF_TB} can be substantially
relaxed while still ensuring
\(
E_t\xrightarrow{L^1}0,
\) (moreover, $E_t^p\xrightarrow{\mathrm{a.s.}}0,\,\forall p\ge1$). Indeed, it is not necessary that the sequence \(\{\|K_t\|\}_{t\geq1}\) be uniformly upper bounded by a constant \(C_K<1\). Rather, it suffices to assume that  the contraction condition
\eqref{ineq:limsup_K_t} holds.

%
\section{Model misspecification}\label{sec4}

In this section, we formalize the notion of model misspecification within the random-observation framework of Section~\ref{sec2}. We compare the optimal filtering recursion with a perturbed recursion generated by misspecified dynamics and a possibly incorrect initial prior distribution. Building on the stability results of Section~\ref{sec3}, we derive quantitative bounds on the resulting filtering error and show how the contractive properties of the predictive operators, together with averaging over the observation process, limit the propagation of model discrepancies through the recursion.

\subsection{True vs misspecified model}

In addition to the true model $\mathcal M = (\pi_0, K, g)$ introduced in Section~\ref{sec2}, we consider a misspecified SSM on the same probability space $(\Omega,\mathcal F,\mathbb P)$.
Let $\widehat X = \{\widehat X_t\}_{t\geq 0}$ be a second state process taking values in $\mathcal X$, with prior $\widehat \pi_0$ and Markov kernels $\widehat K=\{\widehat K_t\}_{t\geq 1}$. The state process is defined by
\begin{align}
    \widehat X_0 &\sim \widehat \pi_0(\mathrm d x_0), \label{mod:pi0_neq_hatpi0}\\
    \widehat X_t &\sim \widehat K_t(\widehat X_{t-1}, \mathrm d x_t), \label{mod:Kt_neq_hatKt}
\end{align}
while the observations $\{Y_t\}_{t\geq 1}$ are generated by the true model $\mM$.
We assume that the likelihoods \(\{g_t\}_{t\geq 1}\) are correctly specified, so that the misspecified model is
\(\widehat{\mathcal M} := (\widehat \pi_0, \widehat K, g)\).
For model $\widehat \mM$, we define the prediction, update, and prediction--update misspecified operators, in analogy with \eqref{PO}, \eqref{EUO}, and \eqref{PEUO}, by
\[
\widehat K_t(\mu)
,
\qquad
U_{Y_t}(\mu)
:=\frac{g_{Y_t}\cdot \mu}{\mu(g_{Y_t})}
,
\qquad
\widehat\Phi_{Y_t}(\mu)
:=
U_{Y_t}\circ \widehat K_t (\mu),
\]
for any $\mu\in \mbM(\mX)$.
We further define the iterated random prediction--update misspecified operator
\begin{equation}\label{RPEUep}
\widehat\Phi_{Y_{k:t}}
:=
\widehat\Phi_{Y_t}
\circ
\widehat\Phi_{Y_{t-1}}
\circ
\cdots
\circ
\widehat\Phi_{Y_k},
\qquad
1\leq k\leq t.
\end{equation}

\begin{remark}\label{rem:miss_K_contractive}
The predictive operators
\(
\widehat K_t:\mathcal D(\mathcal X)\to\mathcal D(\mathcal X)
\)
are linear and non-expansive in total variation.
There exists a constant
\(
0\leq C_{\widehat K}\leq 1
\)
such that
\(
\|\widehat K_t\|
\leq
C_{\widehat K},
\;
\forall\, t\geq 1.
\)
\end{remark}

In the remainder of this section, we use this boundedness to compare the optimal filtering sequence
\(
\{\pi_{Y_{1:t}}\}_{t\geq1}
\)
with the misspecified sequence
\(
\{\widehat{\pi}_{Y_{1:t}}\}_{t\geq1},
\)
where
\[
\widehat{\pi}_{Y_{1:t}}
=
\widehat{\Phi}_{Y_t}(\widehat{\pi}_{Y_{1:t-1}})
=
U_{Y_t}
\circ
\widehat{ K}_t(\widehat{\pi}_{Y_{1:t-1}})=\widehat{\Phi}_{Y_{1:t}}(\widehat{\pi}_0),
\quad t\geq 1.
\]
We begin by quantifying the discrepancy between the true and misspecified dynamics at the level of the predictive operators.

\subsection{Perturbation operator}\label{ssce:Dif_P}

Define the linear operator
\(
\Delta K_t:\mbM(\mX)\to\mbM(\mX)
\)
by
\[
\Delta K_t
:=
K_t-\widehat K_t,
\]
which represents the discrepancy between the true and misspecified predictive dynamics at time \(t\).
Although \(\Delta K_t\) is defined on $\mbM(\mX)$, only its action on probability measures is used below. Since \(K_t(\nu)\) and \(\widehat K_t(\nu)\) are probability measures for every
\(\nu\in\mP(\mX)\),
\(
\|\Delta K_t(\nu)\|_{\mathrm{TV}}
\leq
1.
\)
Accordingly, for $t\ge1$ define
\begin{equation}\label{rem:unibound-Delta2}
\varepsilon_{t-1}
:=
\sup_{\nu\in\mP(\mX)}
\|\Delta K_t(\nu)\|_{\mathrm{TV}},
\quad \text{and } \quad 
\varepsilon
:=
\sup_{t\geq0}\varepsilon_t.
\end{equation}
Then
$
\|\Delta K_t(\nu)\|_{\mathrm{TV}}
\leq
\varepsilon_{t-1}
\leq
\varepsilon$, $\forall\,\nu\in\mP(\mX)$, $\forall\,t\geq1$.

The bound \(\varepsilon_{t-1}\) is uniform over all probability measures, whereas the analysis below also uses the discrepancy along the optimal filtering trajectory. Let
\(
\{\pi_{Y_{1:t}}\}_{t\geq1}
\)
denote the random optimal filtering sequence and, for $t\ge1$, define
\begin{equation}\label{rem:unibound-Delta}
\widehat\varepsilon_{t-1}
:=
\|\Delta K_t(\pi_{Y_{1:t-1}})\|_{\mathrm{TV}},
 \quad \text{and } \quad  \widehat\varepsilon
:=
\sup_{t\geq0}
\widehat\varepsilon_{t}.
\end{equation}
Since
\(
\pi_{Y_{1:t-1}}
\in
\mP(\mX),
\)
\eqref{rem:unibound-Delta2} implies
\(
0
\leq
\widehat\varepsilon_{t}
\leq
\varepsilon_t,
\;
\forall\,t\geq0,
\)
and therefore
\(
0
\leq
\widehat\varepsilon
\leq
\varepsilon\leq1.
\)
Here \(\widehat\varepsilon_{t-1}\) is the discrepancy along the random optimal filtering trajectory at time \(t\), and \(\widehat\varepsilon\) is its uniform-in-time bound.

The uniform bounds
\(\varepsilon\) and \(\widehat{\varepsilon}\) are used in Section~\ref{sec:Stability_Miss}. The time-dependent sequences
\(\{\varepsilon_t\}_{t\geq0}\) and \(\{\widehat{\varepsilon}_t\}_{t\geq0}\)
are treated in Section~\ref{sscec:Decay_missp}.

\begin{proposition}\label{PPofE}
Let \(\nu\in\mP(\mX)\). Then 
\begin{align*}
\|K_t(\pi_{Y_{1:t-1}})
-
\widehat K_t(\nu)\|_{\mathrm{TV}}
&\leq
\|\widehat K_t\|\,
\|\pi_{Y_{1:t-1}}-\nu\|_{\mathrm{TV}}
+
\widehat\varepsilon_{t-1}
\leq
C_{\widehat K}\,
\|\pi_{Y_{1:t-1}}-\nu\|_{\mathrm{TV}}
+
\widehat\varepsilon,
\end{align*}
where \(C_{\widehat K}\) is the uniform bound for \(\|\widehat K_t\|\) introduced in Remark~\ref{rem:miss_K_contractive}.
\end{proposition}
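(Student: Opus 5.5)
We add and subtract $\widehat K_t(\pi_{Y_{1:t-1}})$ inside the norm and then apply the triangle inequality. Fix a realization of $Y_{1:t-1}$, so that $\pi_{Y_{1:t-1}}$ is a fixed element of $\mP(\mX)$; every step below is then pathwise. Write
\[
K_t(\pi_{Y_{1:t-1}})-\widehat K_t(\nu)
=
\bigl(K_t(\pi_{Y_{1:t-1}})-\widehat K_t(\pi_{Y_{1:t-1}})\bigr)
+
\bigl(\widehat K_t(\pi_{Y_{1:t-1}})-\widehat K_t(\nu)\bigr).
\]
The first bracket is $\Delta K_t(\pi_{Y_{1:t-1}})$. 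Its total variation norm is exactly $\widehat\varepsilon_{t-1}$ by the definition \eqref{rem:unibound-Delta}.

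For the second bracket we use linearity of $\widehat K_t$ on $\mbM(\mX)$. This gives $\widehat K_t(\pi_{Y_{1:t-1}})-\widehat K_t(\nu)=\widehat K_t(\pi_{Y_{1:t-1}}-\nu)$. Since $\pi_{Y_{1:t-1}}-\nu\in\mD(\mX)$, the definition of the induced operator norm on $\mD(\mX)$ yields $\|\widehat K_t\|\,\|\pi_{Y_{1:t-1}}-\nu\|_{\mathrm{TV}}$, which is the analogue of \eqref{ineq:Psi_bound} for $\widehat K_t$. Combining the two pieces gives the first inequality.

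For the second inequality we use $\|\widehat K_t\|\le C_{\widehat K}$ from Remark~\ref{rem:miss_K_contractive}, together with $\widehat\varepsilon_{t-1}\le\widehat\varepsilon$, which holds by the definition of $\widehat\varepsilon$ as a supremum.

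There is no real obstacle here. The only points needing care are these:
\begin{itemize}
\item The difference of two probability measures lies in $\mD(\mX)$, so the operator norm applies.
\item The bound holds pointwise in $\omega$, which justifies applying it to the random measure $\pi_{Y_{1:t-1}}$.
\end{itemize}
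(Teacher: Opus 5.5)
Your proposal is correct and follows the paper's proof: you add and subtract $\widehat K_t(\pi_{Y_{1:t-1}})$, identify the first term as $\widehat\varepsilon_{t-1}$ via \eqref{rem:unibound-Delta}, bound the second term by the operator-norm estimate \eqref{ineq:Psi_bound} applied to $\widehat K_t$, and then pass to the uniform constants $C_{\widehat K}$ and $\widehat\varepsilon$. Your remark that every step holds pathwise in $\omega$ is a useful clarification that the paper leaves implicit.
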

\begin{proof}
By the triangle inequality,
\begin{align*}
\|K_t(\pi_{Y_{1:t-1}})
-
\widehat K_t(\nu)\|_{\mathrm{TV}}
&\leq
\|K_t(\pi_{Y_{1:t-1}})
-
\widehat K_t(\pi_{Y_{1:t-1}})\|_{\mathrm{TV}}
+
\|\widehat K_t(\pi_{Y_{1:t-1}})
-
\widehat K_t(\nu)\|_{\mathrm{TV}}.
\end{align*}
The first term is precisely
\(
\|\Delta K_t(\pi_{Y_{1:t-1}})\|_{\mathrm{TV}},
\)
and by ~\eqref{rem:unibound-Delta} it is  bounded by \(\widehat\varepsilon_{t-1}\leq \widehat\varepsilon\). For the second term, using the boundedness in \eqref{ineq:Psi_bound} applied to  \(\widehat K_t\), we have
\[
\|\widehat K_t(\pi_{Y_{1:t-1}})
-
\widehat K_t(\nu)\|_{\mathrm{TV}}
\leq
\|\widehat K_t\|\,
\|\pi_{Y_{1:t-1}}-\nu\|_{\mathrm{TV}}
\leq
C_{\widehat K}\,
\|\pi_{Y_{1:t-1}}-\nu\|_{\mathrm{TV}},
\]
where the second inequality follows from $\|\widehat K_t\| \leq C_{\widehat K_t}$, \( t\ge1\).
Combining the two estimates yields the result.
\end{proof}
Proceeding analogously, we obtain the following complementary estimate.

\begin{proposition}\label{PPofE2}
For any \(\mu,\nu\in\mP(\mX)\),
\begin{align*}
\|K_t(\mu)-\widehat K_t(\nu)\|_{\mathrm{TV}}
&\leq
\|K_t\|\,
\|\mu-\nu\|_{\mathrm{TV}}
+
\varepsilon_{t-1}
\leq
C_K\,
\|\mu-\nu\|_{\mathrm{TV}}
+
\varepsilon,
\end{align*}
for all \(t\geq 1\), where \(C_K\) is the uniform bound for \(\|K_t\|\) introduced in Remark~\ref{rem:UBT_LP}.
\end{proposition}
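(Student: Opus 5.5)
The proof mirrors that of Proposition~\ref{PPofE}, with the roles of the two kernels exchanged in the intermediate term. The plan is to insert the auxiliary measure \(\widehat K_t(\nu)\)'s counterpart \(K_t(\nu)\) and apply the triangle inequality:
\[
\|K_t(\mu)-\widehat K_t(\nu)\|_{\mathrm{TV}}
\leq
\|K_t(\mu)-K_t(\nu)\|_{\mathrm{TV}}
+
\|K_t(\nu)-\widehat K_t(\nu)\|_{\mathrm{TV}} .
\]
This splitting is chosen so that the contraction acts through the true kernel \(K_t\), and the model discrepancy is evaluated at the fixed probability measure \(\nu\). The latter is what allows the uniform quantity \(\varepsilon_{t-1}\) to be used instead of the trajectory-dependent \(\widehat\varepsilon_{t-1}\).

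For the first term, \(\mu-\nu\in\mD(\mX)\) and \(K_t\) is linear on \(\mD(\mX)\). Inequality \eqref{ineq:Psi_bound} therefore gives
\[
\|K_t(\mu)-K_t(\nu)\|_{\mathrm{TV}}\leq\|K_t\|\,\|\mu-\nu\|_{\mathrm{TV}}
\leq C_K\,\|\mu-\nu\|_{\mathrm{TV}},
\]
where the second step uses Remark~\ref{rem:UBT_LP}.

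For the second term, the quantity is exactly \(\|\Delta K_t(\nu)\|_{\mathrm{TV}}\) with \(\nu\in\mP(\mX)\). By the definition \eqref{rem:unibound-Delta2} of \(\varepsilon_{t-1}\) as a supremum over all probability measures, it is bounded by \(\varepsilon_{t-1}\). In turn, \(\varepsilon_{t-1}\leq\varepsilon\) because \(\varepsilon=\sup_{s\geq0}\varepsilon_s\).

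Adding the two estimates yields both inequalities, for every \(t\geq1\). There is no real obstacle here. The only point needing care is the indexing convention: the discrepancy at time \(t\) is labelled \(\varepsilon_{t-1}\). One should also check that \eqref{ineq:Psi_bound} is applied to a difference of probability measures, so that \(\|K_t\|\), defined on \(\mD(\mX)\), is the relevant norm.
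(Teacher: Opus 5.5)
Your proof is correct and is what the paper means by ``proceeding analogously'' to Proposition~\ref{PPofE}: you apply the triangle inequality through the intermediate measure $K_t(\nu)$, so the contraction acts via $\|K_t\|$, and you bound the discrepancy $\|\Delta K_t(\nu)\|_{\mathrm{TV}}$ by the supremum $\varepsilon_{t-1}\le\varepsilon$. One small correction to your justification: the uniform bound $\varepsilon_{t-1}$ holds for $\Delta K_t$ at any probability measure, including $\mu$, so the actual reason to insert $K_t(\nu)$ rather than $\widehat K_t(\mu)$ is to obtain $\|K_t\|$ instead of $\|\widehat K_t\|$, as your first remark already says.
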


\subsection{Propagation of misspecification errors}
\label{sec:Stability_Miss}

We now study how misspecification in the transition dynamics and the initial prior propagates through the filtering recursion. The analysis combines the stability results of Section~\ref{sec3} with the perturbation bounds for the operator \(\Delta K_t\) derived in Section~\ref{ssce:Dif_P}.
Recall that
\[
\|\mu-\nu\|_{\mathrm{TV}}
\leq
1,
\qquad
\forall\,\mu,\nu\in\mP(\mX).
\]
Hence, any informative bound for the filtering discrepancy must be strictly smaller than \(1\). We begin with a one-step estimate that bounds the filtering error at time \(t\) in terms of the previous error and the local perturbation in the dynamics.

Throughout this section,
\(\|K_t\|\) and \(\|\widehat K_t\|\) denote the induced operator norms of the predictive operators, with uniform bounds \(C_K\) and \(C_{\widehat K}\), respectively. Likewise,
\((\varepsilon_t,\varepsilon)\) and
\((\widehat\varepsilon_t,\widehat\varepsilon)\)
denote the uniform and trajectory-dependent perturbation quantities defined in
\eqref{rem:unibound-Delta2}
and
\eqref{rem:unibound-Delta}.
Unless otherwise stated, we work with the uniform quantities
\(\varepsilon_t\), \(\varepsilon\), \(\|K_t\|\), and \(C_K\).
The corresponding trajectory-dependent results follow by replacing these with
\(
\widehat K_t,\,
C_{\widehat K},\,
\widehat\varepsilon_t,\,
\widehat\varepsilon,
\)
and using Proposition~\ref{PPofE} instead of Proposition~\ref{PPofE2}.

\begin{lemma}\label{lemma:ExFilter_onestep}
Let \(\nu_{Y_{1:t-1}}\in\mP(\mX)\), and let
\(
\mathcal G_{t-1}
=
\sigma(Y_{1:t-1}).
\)
Then, for all $t\geq1$
\begin{align*}
\mbE[
\|
\Phi_{Y_t}(\pi_{Y_{1:t-1}})
-
\widehat\Phi_{Y_t}(\nu_{Y_{1:t-1}})
\|_{\mathrm{TV}}
\,|\,
\mathcal G_{t-1}
]
&\leq
\|K_t(\pi_{Y_{1:t-1}})-\widehat K_t(\nu_{Y_{1:t-1}})\|_{\mathrm{TV}}
\\
&\leq
\|K_t\|\,
\|\pi_{Y_{1:t-1}}-\nu_{Y_{1:t-1}}\|_{\mathrm{TV}}
+
\varepsilon_{t-1}.
\end{align*}
\end{lemma}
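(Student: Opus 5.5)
The plan is to write both filters through the common update operator and then apply Lemma~\ref{RIE_G}. Put $\xi_{Y_{1:t-1}}:=K_t(\pi_{Y_{1:t-1}})$ and $\widehat\xi_{Y_{1:t-1}}:=\widehat K_t(\nu_{Y_{1:t-1}})$. Then
\[
\Phi_{Y_t}(\pi_{Y_{1:t-1}})=U_{Y_t}(\xi_{Y_{1:t-1}}),
\qquad
\widehat\Phi_{Y_t}(\nu_{Y_{1:t-1}})=U_{Y_t}(\widehat\xi_{Y_{1:t-1}}).
\]
The update operators are the same because the likelihoods $g$ are correctly specified. Both predictive measures are $\mathcal G_{t-1}$-measurable random probability measures.

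Here $\xi_{Y_{1:t-1}}$ is exactly the optimal predictive measure $K_t(\Phi_{Y_{1:t-1}}(\pi_0))$. By Proposition~\ref{prop:Ypdf}, the conditional law of $Y_t$ given $\mathcal G_{t-1}$ is therefore $L_t(\xi_{Y_{1:t-1}})$. This is essential: the observations come from the true model, so the conditional expectation is taken under the law induced by the true predictive measure. Lemma~\ref{RIE_G} applies directly with $\nu_{Y_{1:t-1}}$ replaced by $\widehat\xi_{Y_{1:t-1}}$, which is an admissible $\mathcal G_{t-1}$-measurable random probability measure. It gives
\[
\mathbb E\big[\|U_{Y_t}(\xi_{Y_{1:t-1}})-U_{Y_t}(\widehat\xi_{Y_{1:t-1}})\|_{\mathrm{TV}}\,\big|\,\mathcal G_{t-1}\big]\le \|K_t(\pi_{Y_{1:t-1}})-\widehat K_t(\nu_{Y_{1:t-1}})\|_{\mathrm{TV}},
\]
which is the first inequality of the lemma.

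For the second inequality, apply Proposition~\ref{PPofE2} pathwise. Fix a realization $Y_{1:t-1}=y_{1:t-1}$ and set $\mu=\pi_{y_{1:t-1}}$ and $\nu=\nu_{y_{1:t-1}}$, both deterministic probability measures. This gives $\|K_t\|\,\|\mu-\nu\|_{\mathrm{TV}}+\varepsilon_{t-1}$. The constant $\varepsilon_{t-1}$ is a supremum over all of $\mathcal P(\mathcal X)$, so the bound holds for every realization and hence almost surely. Explicitly, the triangle inequality through $K_t(\nu)$ splits the difference into $\|K_t(\mu-\nu)\|_{\mathrm{TV}}\le\|K_t\|\,\|\mu-\nu\|_{\mathrm{TV}}$, using \eqref{ineq:Psi_bound}, plus $\|\Delta K_t(\nu)\|_{\mathrm{TV}}\le\varepsilon_{t-1}$.

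The main obstacle is justifying the use of Lemma~\ref{RIE_G} with a misspecified second argument. Two points need checking. First, the lemma's proof, through Lemma~\ref{lemma:ExUpd} and Proposition~\ref{prop:ExTV}, requires only that the first argument generate the observation law. The bound in Proposition~\ref{prop:ExTV} carries the normalizer $\mu(g_{y_t})=\xi_{Y_{1:t-1}}(g_{y_t})$, and this cancels against the conditional density of $Y_t$. Second, positivity of the denominators holds because $g_t>0$, as noted after Proposition~\ref{prop:normalizing_constant_positive}. I would state both points explicitly.
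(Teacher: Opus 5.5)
\textbf{There is a genuine gap, and the paper's own proof shares it.} Your argument is the same as the paper's. You write both filters as $U_{Y_t}$ applied to $\xi_{Y_{1:t-1}}=K_t(\pi_{Y_{1:t-1}})$ and $\widehat\xi_{Y_{1:t-1}}=\widehat K_t(\nu_{Y_{1:t-1}})$, invoke Lemma~\ref{RIE_G}, then apply Proposition~\ref{PPofE2} pathwise. The second inequality is fine.

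The first inequality fails. The step you single out for checking is the cancellation of $\xi_{Y_{1:t-1}}(g_{\mathrm{y_t}})$ in Lemma~\ref{lemma:ExUpd}. That cancellation does not give $\|\xi_{Y_{1:t-1}}-\widehat\xi_{Y_{1:t-1}}\|_{\mathrm{TV}}$; it gives
\[
\int_{\mY}\int_{\mX} g_{\mathrm{y_t}}(x)\,|\xi_{Y_{1:t-1}}-\widehat\xi_{Y_{1:t-1}}|(\sd x)\,\sd\mathrm{y_t}
=|\xi_{Y_{1:t-1}}-\widehat\xi_{Y_{1:t-1}}|(\mX)
=2\,\|\xi_{Y_{1:t-1}}-\widehat\xi_{Y_{1:t-1}}\|_{\mathrm{TV}} .
\]
The factor $2$ arises because, for a zero-mass $\lambda$ with Hahn decomposition $(A,B)$, we have $|\lambda|(\mX)=\lambda(A)-\lambda(B)=2\lambda(A)=2\|\lambda\|_{\mathrm{TV}}$ by Proposition~\ref{NPN}. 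So Lemmas~\ref{lemma:ExUpd} and~\ref{RIE_G}, and with them your proof, only deliver the bound with an extra factor $2$.

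The factor cannot be removed: the first inequality is false as stated. Here is a counterexample.
\begin{itemize}
\item Take $t=1$, $\mX=\{a,b,c\}$, and $K_1=\widehat K_1$ the identity kernel, so $\|K_1\|=1$ and $\varepsilon_0=0$.
\item Take $\pi_0=(0.2,0,0.8)$ and $\nu_0=(0.1,0.1,0.8)$. Then both right-hand sides equal $\|\pi_0-\nu_0\|_{\mathrm{TV}}=0.1$.
\item Let $\mY=(0,2)$. Let $g_1(\cdot\mid x)$ be constant on $(0,1]$ and on $(1,2)$, with values $(0.95,0.05)$ for $x=a$, $(0.05,0.95)$ for $x=b$, and $(0.5,0.5)$ for $x=c$. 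This likelihood is strictly positive and normalized.
\item Under the true model, $Y_1$ falls in $(0,1]$ with probability $0.59$. There the posteriors are $(0.322,0,0.678)$ versus $(0.19,0.01,0.8)$.
\item $Y_1$ falls in $(1,2)$ with probability $0.41$. There the posteriors are $(0.024,0,0.976)$ versus $(0.01,0.19,0.8)$.
\end{itemize}
Hence
\[
\mathbb E\bigl[\|\Phi_{Y_1}(\pi_0)-\widehat\Phi_{Y_1}(\nu_0)\|_{\mathrm{TV}}\bigr]\approx 0.59\cdot 0.132+0.41\cdot 0.19\approx 0.156>0.1 .
\]
Now replace $0.1$ by $\varepsilon$ and let $0.05$ tend to $0$. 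The left side tends to $2\varepsilon(1-2\varepsilon)$, so the factor $2$ is asymptotically sharp.

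What is true is Lemma~\ref{lem:Emu}, applied conditionally on $\mathcal G_{t-1}$ with $\mu=\xi_{Y_{1:t-1}}$. Combined with your pathwise use of Proposition~\ref{PPofE2}, it gives
\[
\mathbb E\bigl[\|\Phi_{Y_t}(\pi_{Y_{1:t-1}})-\widehat\Phi_{Y_t}(\nu_{Y_{1:t-1}})\|_{\mathrm{TV}}\mid\mathcal G_{t-1}\bigr]\le(1+\|L_t\|)\bigl(\|K_t\|\,\|\pi_{Y_{1:t-1}}-\nu_{Y_{1:t-1}}\|_{\mathrm{TV}}+\varepsilon_{t-1}\bigr).
\]
Your own cancellation argument, with the constant corrected, gives the weaker factor $2$. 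Any contraction argument built on this lemma must then control $(1+\|L_t\|)\|K_t\|$ rather than $\|K_t\|$.
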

\begin{proof}
Recalling that
\(\Phi_{Y_t}(\pi_{Y_{1:t-1}})
=
U_{Y_t}
\circ
 K_t(\pi_{Y_{1:t-1}}),\) \(
\widehat{\Phi}_{Y_t}(\nu_{Y_{1:t-1}})
=
U_{Y_t}
\circ
\widehat K_t(\nu_{Y_{1:t-1}}).
\)
Applying Lemma~\ref{RIE_G} with \(\xi_{Y_{1:t-1}}=K_t(\pi_{Y_{1:t-1}})\) and second argument \(\widehat K_t(\nu_{Y_{1:t-1}})\) gives the first inequality. Proposition~\ref{PPofE2} gives the second.
\end{proof}

\begin{remark}\label{rem:a_t}
Define the mean filtering error
\(
a_t
:=
\mbE\!\left[
\|\pi_{Y_{1:t}}-\widehat\pi_{Y_{1:t}}\|_{\mathrm{TV}}
\right],
\)
where the expectation is taken w.r.t. the law of the observation process \(Y_{1:t}\). By the tower property,
\[
a_t
=
\mbE\!\left[
\mbE\!\left[
\|\pi_{Y_{1:t}}-\widehat\pi_{Y_{1:t}}\|_{\mathrm{TV}}
\,\middle|\,
\mG_{t-1}
\right]
\right].
\]
Applying Lemma~\ref{lemma:ExFilter_onestep} with
\(
\nu_{Y_{1:t-1}}=\widehat\pi_{Y_{1:t-1}},
\)
then taking expectations, yields
\begin{equation}\label{eq:recursion}
a_t
\leq
\|K_t\|\,a_{t-1}
+\varepsilon_{t-1}.
\end{equation}
Hence, \( a_t\) satisfies a linear recursion driven by the misspecification errors \(\varepsilon_{t-1}\). (In the trajectory-dependent setting, the driving term is instead \(\mbE[\widehat{\varepsilon}_{t-1}]\)).
\end{remark}

The following theorem quantifies the effect of persistent dynamic
misspecification.

\begin{theorem}\label{thm:R_B_MM}
Let \(a_0= \|\pi_0-\widehat \pi_0\|_{TV}\), and \(\varepsilon_{t-1}\) as in \eqref{rem:unibound-Delta2}. For all \(t\ge1\),
\[
\mbE\!\left[
\|\Phi_{Y_{1:t}}(\pi_0)
-
\widehat\Phi_{Y_{1:t}}(\widehat\pi_0)\|_{\mathrm{TV}}
\right]
\leq a_0 
\prod_{i=1}^{t}
\|K_i\|
+
\sum_{j=1}^{t}
\big(
\prod_{i=j+1}^{t}
\|K_i\|
\big)
\varepsilon_{j-1}.\]
\end{theorem}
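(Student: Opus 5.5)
The plan is to unroll the scalar linear recursion \eqref{eq:recursion} of Remark~\ref{rem:a_t} by induction on \(t\). Set \(a_t:=\mbE[\|\pi_{Y_{1:t}}-\widehat\pi_{Y_{1:t}}\|_{\mathrm{TV}}]\) with \(\pi_{Y_{1:t}}=\Phi_{Y_{1:t}}(\pi_0)\) and \(\widehat\pi_{Y_{1:t}}=\widehat\Phi_{Y_{1:t}}(\widehat\pi_0)\). At \(t=0\) this quantity is the deterministic number \(a_0=\|\pi_0-\widehat\pi_0\|_{\mathrm{TV}}\), with the convention \(\pi_{Y_{1:0}}=\pi_0\) and \(\widehat\pi_{Y_{1:0}}=\widehat\pi_0\).

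First, I would re-derive the one-step inequality \(a_t\le \|K_t\|a_{t-1}+\varepsilon_{t-1}\) carefully. By the tower property,
\[
a_t=\mbE\bigl[\mbE[\|\Phi_{Y_t}(\pi_{Y_{1:t-1}})-\widehat\Phi_{Y_t}(\widehat\pi_{Y_{1:t-1}})\|_{\mathrm{TV}}\mid\mG_{t-1}]\bigr].
\]
Lemma~\ref{lemma:ExFilter_onestep} with \(\nu_{Y_{1:t-1}}=\widehat\pi_{Y_{1:t-1}}\) bounds the inner conditional expectation by \(\|K_t\|\,\|\pi_{Y_{1:t-1}}-\widehat\pi_{Y_{1:t-1}}\|_{\mathrm{TV}}+\varepsilon_{t-1}\). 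Two facts make taking the outer expectation legitimate. First, \(\widehat\pi_{Y_{1:t-1}}\) is \(\mG_{t-1}\)-measurable, being a deterministic functional of \(Y_{1:t-1}\). Second, the observations are generated by the true model, so \(L_t(\xi_{Y_{1:t-1}})\) with \(\xi_{Y_{1:t-1}}=K_t(\pi_{Y_{1:t-1}})\) is the conditional law of \(Y_t\) (Proposition~\ref{prop:Ypdf}); Lemma~\ref{RIE_G} is applied with this choice. Since \(\|K_t\|\) and \(\varepsilon_{t-1}\) are deterministic, taking expectations gives the recursion.

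Then I would prove the claimed bound by induction. For \(t=1\) it reads \(a_1\le \|K_1\|a_0+\varepsilon_0\), where the inner product over \(i=2,\dots,1\) is empty and equals \(1\). For the inductive step, assume the bound holds at \(t-1\). Multiply it by \(\|K_t\|\ge0\); this preserves the inequality and turns \(\prod_{i=1}^{t-1}\) into \(\prod_{i=1}^{t}\) and each \(\prod_{i=j+1}^{t-1}\) into \(\prod_{i=j+1}^{t}\) for \(j\le t-1\). Adding \(\varepsilon_{t-1}\) supplies exactly the \(j=t\) term, whose product is empty.

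The only point needing care is the measurability and integrability justification for conditioning on \(\mG_{t-1}\) with a misspecified filter as the second argument of Lemma~\ref{RIE_G}. The TV norms are bounded by \(1\), so integrability is automatic, and measurability follows from the filters being measurable functions of the observation path. Everything else is an elementary discrete Grönwall/variation-of-constants computation.
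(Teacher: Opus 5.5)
Your induction is the paper's own argument (the paper just names the partial products $\mathcal K_{s,t}$), and the unrolling step is fine. The gap is in the input you treat as settled. You take the one-step recursion \eqref{eq:recursion} from Lemma~\ref{lemma:ExFilter_onestep}, and hence from Lemmas~\ref{RIE_G} and~\ref{lemma:ExUpd}. That recursion is false, and the problem is the constant, not measurability. In the proof of Lemma~\ref{lemma:ExUpd}, after the normalizing constant cancels, one is left with $\int_{\mathcal X}\bigl(\int_{\mathcal Y}g_{\mathrm{y}}(x)\,\mathrm d\mathrm{y}\bigr)|\mu-\nu|(\mathrm dx)=|\mu-\nu|(\mathcal X)$. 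Under the paper's normalization ($\|\lambda\|_{\mathrm{TV}}=\lambda(A)$ for zero-mass $\lambda$, Proposition~\ref{NPN}) this equals $2\|\mu-\nu\|_{\mathrm{TV}}$, not $\|\mu-\nu\|_{\mathrm{TV}}$. The factor cannot be removed: conditioning two priors on the same observation can genuinely increase their total variation distance, so the statement itself fails.

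Here is a concrete counterexample.
\begin{itemize}
\item Take $\mathcal X=\{1,2,3,4\}$ and $K_1(x,\cdot)=\widehat K_1(x,\cdot)=\delta_x$, so $\|K_1\|=1$ and $\varepsilon_0=0$.
\item Let $g_1(\mathrm{y}\mid x)=\mathcal N(\mathrm{y};m(x),1)$, with $m=0$ on $\{1,2\}$ and $m=D$ on $\{3,4\}$.
\item Let $\pi_0=(0.45,0.05,0.45,0.05)$ and $\widehat\pi_0=(0.45,0.10,0.45,0)$, so $a_0=0.05$.
\end{itemize}
As $D\to\infty$, the observation $Y_1$ reveals the block, and each block has probability $1/2$ under the true model. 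The posterior TV distances are $9/110$ on $\{1,2\}$ and $1/10$ on $\{3,4\}$. Hence
\[
\mbE\bigl[\|\Phi_{Y_1}(\pi_0)-\widehat\Phi_{Y_1}(\widehat\pi_0)\|_{\mathrm{TV}}\bigr]\to \tfrac{1}{11}>0.05=a_0\|K_1\|+\varepsilon_0 ,
\]
so the bound fails at $t=1$ for large $D$. Replacing $\delta_x$ by $(1-\theta)\delta_x+\theta\rho$ with small $\theta$ gives the same failure with $\|K_1\|<1$. The example also contradicts Theorem~\ref{SRF_TB}.

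What your induction does prove is the following. Use the correct conditional estimate of Lemma~\ref{lem:Emu} (factor $1+\|L_t\|$) in place of Lemma~\ref{RIE_G}; this is legitimate because $L_t(\xi_{Y_{1:t-1}})$ is the conditional law of $Y_t$. You then get $a_t\le(1+\|L_t\|)\bigl(\|K_t\|a_{t-1}+\varepsilon_{t-1}\bigr)$. The same induction yields the stated bound with each $\|K_i\|$ replaced by $(1+\|L_i\|)\|K_i\|$ and each $\varepsilon_{j-1}$ replaced by $(1+\|L_j\|)\varepsilon_{j-1}$.
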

\begin{proof}
For \(0\leq s\leq t\), define
\[
\mK_{s,t}
:=
\begin{cases}
\hspace{0.5cm}1, & s=t,\\[1mm]
\displaystyle\prod_{i=s+1}^{t}\|K_i\|, & 0\leq s<t.
\end{cases}
\]
Then, for every \(0\leq s<t\),  $\mK_{s,t}=\|K_t\|\mK_{s,t-1}.$ 
Now we prove by induction that
\[
a_t
\leq
\mK_{0,t}a_0
+
\sum_{j=1}^{t}
\mK_{j,t}\varepsilon_{j-1},
\qquad t\geq1.
\]
For \(t=1\), the one-step estimate in \eqref{eq:recursion} yields
\[
a_1
\leq
\|K_1\|a_0+\varepsilon_0
=
\mK_{0,1}a_0
+
\mK_{1,1}\varepsilon_0.
\]
Now assume that the claim holds at time \(t-1\). Then, by \eqref{eq:recursion} again
\begin{align*}
a_t
&\leq
\|K_t\|a_{t-1}
+
\varepsilon_{t-1}
\leq
\|K_t\|
\big(
\mK_{0,t-1}a_0
+
\sum_{j=1}^{t-1}
\mK_{j,t-1}\varepsilon_{j-1}
\big)
+
\varepsilon_{t-1}
\\
&=
\mK_{0,t}a_0
+
\sum_{j=1}^{t-1}
\mK_{j,t}\varepsilon_{j-1}
+
\mK_{t,t}\varepsilon_{t-1}=
\mK_{0,t}a_0
+
\sum_{j=1}^{t}
\mK_{j,t}\varepsilon_{j-1}.
\end{align*}
This completes the induction.
Finally, since
\(
\mK_{0,t}
=
\prod_{i=1}^{t}\|K_i\|,
\) and \(
\mK_{j,t}
=
\prod_{i=j+1}^{t}\|K_i\|,
\)
the claimed bound follows.
\end{proof}

\begin{corollary}\label{thm:lim_bound}
Assume that
\(
C_K<1.
\)
Then, for all \(t\ge1\),
\[
\mbE\!\left[
\|\Phi_{Y_{1:t}}(\pi_0)
-
\widehat\Phi_{Y_{1:t}}(\widehat\pi_0)\|_{\mathrm{TV}}
\right]
\leq
a_0 \, e^{-(1-C_K)t}
+
\varepsilon
\frac{1-C_K^{\,t}}
     {1-C_K}.
\]
Consequently,  whenever \(C_K+\varepsilon \leq 1,\)
\begin{equation}\label{cor:exp}
\limsup_{t\to\infty}
\mbE\!\left[
\|\Phi_{Y_{1:t}}(\pi_0)
-
\widehat\Phi_{Y_{1:t}}(\widehat\pi_0)\|_{\mathrm{TV}}
\right]
\leq 
\frac{\varepsilon}{1-C_K}.
\end{equation}
\end{corollary}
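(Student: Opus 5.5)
Since the corollary is a specialization of Theorem~\ref{thm:R_B_MM}, the plan is to start from that bound and estimate each of its two terms using the uniform constants. For the first term I use inequality \eqref{ineq:K_exp} from the proof of Theorem~\ref{SRF_TB}: $\prod_{i=1}^t\|K_i\|\le e^{-(1-C_K)t}$. This immediately gives $a_0\prod_{i=1}^t\|K_i\|\le a_0 e^{-(1-C_K)t}$. One could also keep the sharper factor $a_0C_K^t$, but the exponential form is the one stated.

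For the accumulated misspecification term I bound $\varepsilon_{j-1}\le\varepsilon$ (by the definition \eqref{rem:unibound-Delta2}) and $\|K_i\|\le C_K$. Then
\[
\sum_{j=1}^{t}\Big(\prod_{i=j+1}^{t}\|K_i\|\Big)\varepsilon_{j-1}\le\varepsilon\sum_{j=1}^{t}C_K^{\,t-j}=\varepsilon\sum_{k=0}^{t-1}C_K^{\,k}=\varepsilon\,\frac{1-C_K^{\,t}}{1-C_K}.
\]
The geometric sum is well defined because $C_K<1$. Adding the two estimates gives the finite-time bound.

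For the asymptotic statement \eqref{cor:exp}, I let $t\to\infty$. Since $C_K<1$, both $e^{-(1-C_K)t}\to0$ and $C_K^t\to0$. Taking the $\limsup$ of the right-hand side therefore gives $\varepsilon/(1-C_K)$. The hypothesis $C_K+\varepsilon\le1$ is not needed for this limit. Its role is to guarantee $\varepsilon/(1-C_K)\le1$, so the bound is informative given that TV distances never exceed $1$. I would note this explicitly rather than try to use the hypothesis in the argument.

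The only subtle point is bookkeeping in the index shift $k=t-j$, together with the convention that an empty product ($j=t$) equals $1$. This convention is consistent with $\mK_{t,t}=1$ in the proof of Theorem~\ref{thm:R_B_MM}. Beyond that the argument is a routine specialization, and I do not expect any genuine obstacle.
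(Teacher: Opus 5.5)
Your manipulations are exactly those of the paper's proof. You start from Theorem~\ref{thm:R_B_MM} and bound $a_0\prod_{i=1}^t\|K_i\|\le a_0C_K^{\,t}\le a_0e^{-(1-C_K)t}$ via \eqref{ineq:K_exp}. You then sum the forcing term as the geometric series $\varepsilon\sum_{\ell=0}^{t-1}C_K^{\,\ell}$, and, as the paper does, you use $C_K+\varepsilon\le1$ only to note that $\varepsilon/(1-C_K)\le1$. These steps are fine in themselves, and you are right that this last hypothesis plays no role in the $\limsup$.

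The gap is in the input that you and the paper take for granted. Theorem~\ref{thm:R_B_MM} rests on Lemmas~\ref{lemma:ExUpd} and~\ref{RIE_G}, i.e.\ on the claim that the Bayes update is non-expansive in $L_t(\mu)$-expectation, and that claim is false. The chain in the proof of Lemma~\ref{lemma:ExUpd} actually ends at $\int_{\mathcal X}|\mu-\nu|(\mathrm dx)=2\|\mu-\nu\|_{\mathrm{TV}}$ under the paper's normalization $\|\lambda\|_{\mathrm{TV}}=\lambda(A)$, and this factor is genuine, not an artifact of the proof. Take $\mathcal X=\{1,2,3\}$ and a binary observation with likelihoods $g_a=(\eta,1-\eta,\eta)$, $g_b=(1-\eta,\eta,1-\eta)$, $0<\eta<\tfrac12$; smearing $a,b$ into two well-separated bumps on $\mathbb R$ changes the numbers only slightly. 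Let $\mu=(\tfrac12,\tfrac12,0)$ and $\nu=(\tfrac12,0,\tfrac12)$. Then $L(\mu)(a)=L(\mu)(b)=\tfrac12$, $\|U_a(\mu)-U_a(\nu)\|_{\mathrm{TV}}=1-\eta$ and $\|U_b(\mu)-U_b(\nu)\|_{\mathrm{TV}}=\tfrac12$. So the $L(\mu)$-expectation equals $\tfrac34-\tfrac{\eta}{2}>\tfrac12=\|\mu-\nu\|_{\mathrm{TV}}$.

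Feed this into the corollary with $\widehat K_t=K_t=K$, where $K(x,\cdot)=(1-\theta)\delta_x+\theta\,\mathrm{Unif}\{1,2,3\}$, so $C_K=1-\theta$ and $\varepsilon=0$. Take $\pi_0=\mu$, $\widehat\pi_0=\nu$ and $\theta=\eta=0.1$. A direct computation at $t=1$ gives $\mathbb E\big[\|\Phi_{Y_1}(\pi_0)-\widehat\Phi_{Y_1}(\widehat\pi_0)\|_{\mathrm{TV}}\big]\approx0.545$, while the claimed bound is $\tfrac12e^{-0.1}\approx0.452$. So the statement, as written, cannot be proved by any route. A correct version keeps the update factor from Lemma~\ref{lem:Emu}, applied conditionally as in Lemma~\ref{RIE_G}. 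That gives $a_t\le(1+C_L)(C_Ka_{t-1}+\varepsilon)$, and your geometric-series argument then goes through only under $(1+C_L)C_K<1$, with the constants changed accordingly.
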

\begin{proof}
By Theorem~\ref{thm:R_B_MM}, we have
\begin{align*}
\mbE\!\left[
\left\|
\Phi_{Y_{1:t}}(\pi_0)
-
\widehat\Phi_{Y_{1:t}}(\widehat\pi_0)
\right\|_{\mathrm{TV}}
\right]
\leq
a_0
\prod_{i=1}^{t}
\|K_i\| +
\sum_{j=1}^{t}
\left(
\prod_{i=j+1}^{t}
\|K_i\|
\right)
\varepsilon_{j-1}.
\end{align*}
Since \(\|K_i\|\leq C_K<1\), \(i\geq 1\), it follows from \eqref{ineq:K_exp} that
\begin{align*}
a_0
\prod_{i=1}^{t}
\|K_i\|
&\leq
a_0 C_K^{\,t}
\leq
a_0 e^{-(1-C_K)t}.
\end{align*} Moreover, since \(\varepsilon_j\leq\varepsilon\), for all \(j\geq 0\), we have
\begin{align*}
\sum_{j=1}^{t}
\left(
\prod_{i=j+1}^{t}
\|K_i\|
\right)
\varepsilon_{j-1}
\leq
\varepsilon
\sum_{j=1}^{t}
C_K^{\,t-j} =
\varepsilon
\sum_{\ell=0}^{t-1}
C_K^{\,\ell} =
\varepsilon
\frac{1-C_K^{\,t}}
     {1-C_K}.
\end{align*}
Combining the preceding estimates yields the finite-time bound. Finally,
letting \(t\to\infty\), using \(C_K<1\), and recalling that the total
variation norm between two probability measures is trivially bounded by one,
we note that \(C_K+\varepsilon\leq 1\), implies
\(
\dfrac{\varepsilon}{1-C_K}\leq 1.
\)
Therefore, we obtain the estimate \eqref{cor:exp}.
\end{proof}

Together, the preceding results show that, under suitable contractivity assumptions, model misspecification does not accumulate indefinitely through the prediction--update recursion. The asymptotic expected filtering error remains bounded by a constant proportional to the uniform-in-time discrepancy between the true and misspecified dynamics.

The supermartingale interpretation of Remark~\ref{rem:super_Martingale} extends to this setting. Without dynamical perturbations, the filtering error \(E_t\) is a nonnegative supermartingale with respect to the observation filtration. Misspecification adds a forcing term to the conditional error recursion, so the exact supermartingale property is lost. The resulting error process is instead a nonnegative almost-supermartingale \cite{robbins1971convergence}, as stated next.

\begin{remark}\label{rem:almost_SM}
Recall the observation filtration
\(
\mG=\{\mathcal G_t\}_{t\ge0}\).
Lemma~\ref{lemma:ExFilter_onestep} shows that the filtering random error process
\(
\widehat E_t
:=
\|
\Phi_{Y_{1:t}}(\pi_0)
-
\widehat\Phi_{Y_{1:t}}(\widehat\pi_0)
\|_{\mathrm{TV}}
\)
has the structure of a nonnegative \(\mG\)-almost-supermartingale. 
Indeed, we can write
\begin{align*}
\mathbb E[
\widehat E_t
\,|\,
\mathcal G_{t-1}
]
&\leq
\widehat E_{t-1}
-
(1-\|K_t\|)\widehat E_{t-1}
+
\varepsilon_{t-1}.
\end{align*}
\end{remark}
The negative term on the right-hand side represents the dissipative effect of the contractive filtering dynamics, while \(\varepsilon_{t-1}\) is the forcing induced by the mismatch between the true and misspecified dynamics.

This almost-supermartingale structure goes beyond the uniform-in-time bounds \eqref{rem:unibound-Delta2} and \eqref{rem:unibound-Delta}. It also covers time-dependent misspecification errors and leads naturally to almost sure coalescence of the two filtering recursions under suitable summability conditions. These results are developed in Section~\ref{sscec:Decay_missp}.

\subsection{Summable misspecification}
\label{sscec:Decay_missp}

We now consider the case in which the discrepancy between the true and misspecified predictive operators is time-dependent and asymptotically summable. In this regime, the cumulative effect of model misspecification remains finite, allowing the contractive filtering dynamics to dominate asymptotically. Let
\(
\{\varepsilon_t\}_{t\geq0}
\)
and
\(
\{\widehat\varepsilon_t\}_{t\geq0}
\)
denote the perturbation sequences defined in
\eqref{rem:unibound-Delta2}
and
\eqref{rem:unibound-Delta},
respectively. These quantities measure the one-step discrepancy between the true and misspecified predictive dynamics. Since
\(
0
\leq
\widehat\varepsilon_t
\leq
\varepsilon_t
\leq
1,
\;
t\geq0,
\)
both sequences are uniformly bounded, and therefore uniformly integrable (see Remark~\ref{rem:UB_UI}). Throughout this subsection, we state the results for
\(\{\varepsilon_t\}_{t\ge0}\). The corresponding trajectory-dependent version is obtained by replacing
\(
K_t,\;
C_K,\;
\varepsilon_t
\)
with
\(
\widehat K_t,\;
C_{\widehat K},\;
\widehat \varepsilon_t,
\)
respectively.

The following theorem extends Theorem~\ref{thm:Prob_AS} to the misspecified setting. The contractive dynamics continue to drive the filtering error towards zero, provided that the cumulative perturbation induced by model misspecification is summable.

\begin{theorem}\label{thm:AS_miss_subsequence}
Let
\(
\{\widehat E_t\}_{t\geq0}
\) be 
as in Remark~\ref{rem:almost_SM}. Assume that
\( 
\sum_{t=0}^{\infty}
\varepsilon_t
<
\infty, \; \text{ a.s.}
\)
Suppose furthermore that there exists a subsequence
\(
\{t_j\}_{j\geq1}
\subset\mathbb N
\)
such that
\(
\lim_{j\to\infty}
\|K_{t_j}\|
=
\kappa_\infty
<
1.
\)
Then
\(
\widehat E_t 
\xrightarrow{\mathrm{a.s.}}
0,\) \(
\widehat E_t
\xrightarrow{L^1}
0,\) and \(\operatorname{Var}(\widehat E_t)\to0\).
\end{theorem}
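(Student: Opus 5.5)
The plan is to work with both the mean error $a_t=\mbE[\widehat E_t]$ and the pathwise process $\widehat E_t$. The deterministic recursion \eqref{eq:recursion} will give $a_t\to0$. The almost-supermartingale structure of Remark~\ref{rem:almost_SM} will give almost sure convergence of $\widehat E_t$ to some limit. Fatou's lemma will then identify that limit as $0$.

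\textbf{Step 1: $a_t$ has a limit.} From \eqref{eq:recursion} and $\|K_t\|\le 1$ we get $a_t\le a_{t-1}+\varepsilon_{t-1}$. Define $b_t:=a_t+\sum_{j\ge t}\varepsilon_j$, which is finite by the summability assumption. Then
\[
b_t\le a_{t-1}+\varepsilon_{t-1}+\sum_{j\ge t}\varepsilon_j=b_{t-1},
\]
so $\{b_t\}$ is nonincreasing and nonnegative and therefore converges. Since the tail sums $\sum_{j\ge t}\varepsilon_j$ tend to $0$, the limit $\ell:=\lim_{t\to\infty}a_t$ exists and $\ell\in[0,1]$.

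\textbf{Step 2: the limit is zero.} Evaluate \eqref{eq:recursion} along the subsequence: $a_{t_j}\le \|K_{t_j}\|\,a_{t_j-1}+\varepsilon_{t_j-1}$. Let $j\to\infty$. The left side tends to $\ell$, and $a_{t_j-1}\to\ell$ because the whole sequence converges. Also $\|K_{t_j}\|\to\kappa_\infty$, and $\varepsilon_{t_j-1}\to0$ because $\sum_t\varepsilon_t<\infty$. This yields $\ell\le\kappa_\infty\ell$, and since $\kappa_\infty<1$ it forces $\ell=0$. Hence $\widehat E_t\xrightarrow{L^1}0$.

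This is the step I expect to carry the argument. The subtle point is that no uniform contraction is available, so the bound of Theorem~\ref{thm:R_B_MM} cannot be summed directly. I avoid it by first securing existence of the limit in Step 1, which uses only non-expansiveness, and only then using contraction along the subsequence. Without that ordering one would have to control products $\prod\|K_i\|$ over sparse index sets, which is messier.

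\textbf{Step 3: almost sure convergence.} By Lemma~\ref{lemma:ExFilter_onestep} with $\nu_{Y_{1:t-1}}=\widehat\pi_{Y_{1:t-1}}$ and $\|K_t\|\le1$, we have $\mbE[\widehat E_t\mid\mG_{t-1}]\le \widehat E_{t-1}+\varepsilon_{t-1}$. The sequence $\varepsilon_t$ is deterministic, being a supremum over $\mP(\mX)$. Hence $Z_t:=\widehat E_t+\sum_{j\ge t}\varepsilon_j$ is a nonnegative $\mG$-supermartingale, and equivalently one may invoke the Robbins--Siegmund theorem \cite{robbins1971convergence}. By Corollary~11.7 in \cite{Williams91}, $Z_t$ converges almost surely. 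Since the tail sums vanish, $\widehat E_t\xrightarrow{\mathrm{a.s.}}\widehat E_\infty\in[0,1]$.

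Fatou's lemma together with Step 2 gives $\mbE[\widehat E_\infty]\le\liminf_t a_t=0$, so $\widehat E_\infty=0$ almost surely.

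Finally, $0\le\widehat E_t\le1$, so Lemma~\ref{lem:Bounded_Conv_Equiv} applies: the $L^1$ convergence of Step 2 is a convergence in probability, and the lemma then yields $\operatorname{Var}(\widehat E_t)\to0$. For the trajectory-dependent version, the same argument runs with $\widehat K_t$ and $\mbE[\widehat\varepsilon_{t-1}]$ in place of $K_t$ and $\varepsilon_{t-1}$, using Proposition~\ref{PPofE}.
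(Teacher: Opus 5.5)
Your proof is correct, but it identifies the limit by a different mechanism than the paper.

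The paper applies Robbins--Siegmund directly to $\widehat E_t$, with dissipation $B_{t-1}=(1-\|K_t\|)\widehat E_{t-1}$ and forcing $\varepsilon_{t-1}$, and uses both of its conclusions. The first is $\widehat E_t\to\widehat E_\infty$ a.s. The second is $\sum_t B_{t-1}<\infty$ a.s., which gives $B_{t_j-1}\to0$ pathwise. Since $1-\|K_{t_j}\|\to1-\kappa_\infty>0$, this forces $\widehat E_{t_j-1}\to0$, so $\widehat E_\infty=0$. The $L^1$ convergence and $\operatorname{Var}(\widehat E_t)\to0$ then follow from boundedness via Lemma~\ref{lem:Bounded_Conv_Equiv}.

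You run the same ``a limit exists, and the contractive subsequence pins it at zero'' logic on the deterministic sequence $a_t=\mathbb E[\widehat E_t]$ instead. This gives the $L^1$ statement from \eqref{eq:recursion} and the hypotheses, without any martingale convergence theorem. You then need only the plain supermartingale convergence theorem for $Z_t$, which is legitimate because $\varepsilon_t$ is deterministic, plus Fatou to identify the a.s.\ limit. That route is more elementary, and it shows that the $L^1$ conclusion depends only on the mean recursion.

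The paper's pathwise route is more robust when the forcing is random, and this affects your closing remark. In the trajectory-dependent version the paper assumes only $\sum_t\widehat\varepsilon_t<\infty$ a.s., which does not imply $\sum_t\mathbb E[\widehat\varepsilon_t]<\infty$. Moreover, $\sum_{j\ge t}\widehat\varepsilon_j$ is not $\mathcal G_t$-measurable, so your $Z_t$ is not adapted there. Your Steps~1--2 would therefore need a strictly stronger hypothesis in that setting. By contrast, Robbins--Siegmund with the adapted forcing $\widehat\varepsilon_{t-1}$, followed by the pathwise subsequence argument, carries over with only the replacement of $K_t$ by $\widehat K_t$.
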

\begin{proof}
Applying Lemma~\ref{lemma:ExFilter_onestep} with 
\(
\nu=\widehat\pi_{Y_{1:t-1}}
\)
yields
\begin{equation}\label{eq:general_onestep}
\mathbb E[
\widehat E_t
\,|\,
\mathcal G_{t-1}]
\leq
\|K_t\|
\widehat E_{t-1}
+
\varepsilon_{t-1}.
\end{equation}
Since
\(
\|K_t\|\leq1
\)
for every \(t\geq1\), we may rewrite
\eqref{eq:general_onestep}
as
\[
\mathbb E[
\widehat E_t
\,|\,
\mathcal G_{t-1}
]
\leq
\widehat E_{t-1}
-
B_{t-1}
+
\varepsilon_{t-1},
\]
where
\(
B_{t-1}
:=
(1-\|K_t\|)
\widehat E_{t-1}.
\)
Since
\(
0
\leq
\widehat E_t
\leq
1,
\) and \(\|K_t\| \leq 1, \; t\ge1.\)  Then \(
0
\leq
B_t
\leq
1, \) for all \(t\ge1.
\)
The processes
\(
\{\widehat E_t\}_{t\geq0}
\),
\(
\{B_t\}_{t\geq0}
\),
and
\(
\{\varepsilon_t\}_{t\geq0}
\)
are nonnegative \(\{\mG_t\}_{t\geq1}\)-adapted, and uniformly integrable (see Remark~\ref{rem:UB_UI}).
Hence all the assumptions of the Robbins--Siegmund almost-supermartingale convergence theorem
are satisfied (see Theorem~\ref{thm:robbins_siegmund}).
Therefore, there exists a finite random variable
\(
\widehat E_\infty
\)
such that
\begin{equation}\label{eq:general_limit}
\widehat E_t
\rightarrow
\widehat E_\infty,
\qquad
\text{a.s.},
\end{equation}
and
\(\sum_{t=1}^{\infty} B_{t-1} =
\sum_{t=1}^{\infty}
(1-\|K_t\|)
\widehat E_{t-1}
<
\infty,
\;
\text{a.s.}
\)
In particular,
\(
(1-\|K_t\|)
\widehat E_{t-1}
\rightarrow
0,
\;
\text{a.s.}
\)
Restricting this limit to the subsequence
\(
\{t_j\}_{j\geq1}
\)
yields
\[
\lim_{t_j\to \infty}(1-\|K_{t_j}\|)
\widehat E_{t_j-1}=
(1-\kappa_{\infty})\lim_{t_j\to \infty}
\widehat E_{t_j-1}
=0,
\qquad
\text{a.s.},
\]
it follows that
\(
\widehat E_{t_j-1}
\xrightarrow{\mathrm{a.s.}}
0.
\)
Finally, \eqref{eq:general_limit} shows that the full sequence
\(
\{\widehat E_t\}_{t\geq0}
\)
converges almost surely. Therefore every convergent subsequence must converge to the same limit. Since
\(
\widehat E_{t_j-1}
\xrightarrow{\mathrm{a.s.}}
0
\), necessarily
\(
\widehat E_\infty=0
\)
a.s. Hence
\(
\widehat E_t
\xrightarrow{\mathrm{a.s.}}
0.
\)
Now, since
\(
\widehat E_t
\xrightarrow{\mathbb P}
0.
\)
The remaining results follows directly from Lemma~\ref{lem:Bounded_Conv_Equiv}.
\end{proof}

The trajectory-dependent version follows by replacing
\(\varepsilon_t\)
with
\(\widehat\varepsilon_t\).
In this case, the summability condition
\(
\sum_{t=0}^{\infty}
\widehat\varepsilon_t
<
\infty,
\;
\text{a.s.},
\)
asserts that the cumulative discrepancy between the true and misspecified dynamics along the optimal filtering trajectory remains finite. In particular,
\[
\widehat\varepsilon_t
\xrightarrow{\mathrm{a.s.}}
0.
\]
Hence, although the misspecified model may differ substantially from the true dynamics globally, it becomes asymptotically indistinguishable from the true model when evaluated along the filtering path.

Recall the filtering error process \(E_t\) introduced in Remark~\ref{rem:super_Martingale}. While Theorem~\ref{thm:AS_miss_subsequence} is formulated for the process \(\widehat E_t\), which accounts for both prior and dynamical misspecification, the next corollary shows that the original filtering error process also converges to zero almost surely under the same assumptions.
\begin{corollary}\label{cor:hat_to_unhat_error}
Let \(\{E_t\}_{t\geq0}\) be as in
Remark~\ref{rem:super_Martingale}. Suppose that there exists a subsequence
\(
\{t_j\}_{j\geq1}\subset\mathbb N
\)
such that
\(
\lim_{j\to\infty}
\|K_{t_j}\|
=
\kappa_\infty
<
1.
\)
Then
\(
E_t
\xrightarrow{\mathrm{a.s.}}
0. 
\) In particular
\(\{\Phi_{Y_t}\}_{t\geq1}\)
is stable and \(\operatorname{Var}(E_t)\to0\).
\end{corollary}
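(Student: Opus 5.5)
The plan is to obtain the corollary as the special case of Theorem~\ref{thm:AS_miss_subsequence} with no dynamical misspecification. Take $\widehat K_t := K_t$ for every $t\ge1$, keep $\widehat\pi_0$ as the second prior, and so form the ``misspecified'' model $\widehat{\mathcal M}=(\widehat\pi_0,K,g)$.

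\emph{Step 1: the reduction is exact.} With this choice $\widehat\Phi_{Y_t}=U_{Y_t}\circ K_t=\Phi_{Y_t}$, hence $\widehat\Phi_{Y_{1:t}}=\Phi_{Y_{1:t}}$. The process $\widehat E_t$ of Remark~\ref{rem:almost_SM} therefore coincides pathwise with $E_t=\|\Phi_{Y_{1:t}}(\pi_0)-\Phi_{Y_{1:t}}(\widehat\pi_0)\|_{\mathrm{TV}}$ of Remark~\ref{rem:super_Martingale}.

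\emph{Step 2: the hypotheses of the theorem hold.} Since $\Delta K_t=K_t-\widehat K_t=0$, definition \eqref{rem:unibound-Delta2} gives $\varepsilon_t=0$ for all $t\ge0$. The summability condition $\sum_t\varepsilon_t<\infty$ is then trivial. The subsequence condition $\lim_j\|K_{t_j}\|=\kappa_\infty<1$ is assumed verbatim in the corollary. Remark~\ref{rem:miss_K_contractive} holds with $C_{\widehat K}=C_K$.

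\emph{Step 3: conclude.} Theorem~\ref{thm:AS_miss_subsequence} now yields $E_t\xrightarrow{\mathrm{a.s.}}0$, $E_t\xrightarrow{L^1}0$ and $\operatorname{Var}(E_t)\to0$. Convergence in $L^1$ for every pair $\pi_0,\widehat\pi_0$ is exactly stability in expectation (Definition~\ref{def:Ex_St}); equivalently one can cite Theorem~\ref{thm:Stability_ConvProb}.

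As a self-contained alternative, Remark~\ref{rem:super_Martingale} gives $\mathbb E[E_t\mid\mathcal G_{t-1}]\le E_{t-1}-(1-\|K_t\|)E_{t-1}$. Robbins--Siegmund (or plain supermartingale convergence) gives $E_t\to E_\infty$ a.s. and $\sum_t(1-\|K_t\|)E_{t-1}<\infty$ a.s. Along $t_j$ the factor $1-\|K_{t_j}\|\to1-\kappa_\infty>0$, which forces $E_{t_j-1}\to0$, and hence $E_\infty=0$.

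There is no real obstacle here. The only point needing care is checking that the degenerate choice $\widehat K=K$ is admissible in the framework of Section~\ref{sec4}, in particular that Lemma~\ref{lemma:ExFilter_onestep} still applies with $\varepsilon_{t-1}=0$. It does, since Lemma~\ref{RIE_G} is used with the true predictive measure.
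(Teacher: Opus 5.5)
Your proposal is correct and is essentially the paper's proof. The paper also specializes Theorem~\ref{thm:AS_miss_subsequence} to $\widehat K_t=K_t$, notes that $\widehat E_t=E_t$ and $\varepsilon_t\equiv0$, concludes $E_t\to0$ a.s., and reads off stability and $\operatorname{Var}(E_t)\to0$ from Theorem~\ref{thm:Stability_ConvProb}; your self-contained alternative is just that theorem's proof inlined without the forcing term.
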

\begin{proof}
The process \(\{E_t\}_{t\geq0}\) is recovered from the misspecified error process
\(\{\widehat E_t\}_{t\geq0}\) by taking
\(
\widehat K_t=K_t,
\; t\geq1.
\)
Indeed, under this choice,
\(
\widehat\Phi_{Y_{1:t}}(\widehat\pi_0)
=
\Phi_{Y_{1:t}}(\widehat\pi_0),
\)
and therefore
\[
\widehat E_t
=
\left\|
\Phi_{Y_{1:t}}(\pi_0)
-
\Phi_{Y_{1:t}}(\widehat\pi_0)
\right\|_{\mathrm{TV}}
=
E_t.
\]
Moreover, the misspecification errors vanish identically, that is,
\(
\varepsilon_t=0,
\; t\geq0,
\)
so that, trivially
\(
\sum_{t=0}^{\infty}\varepsilon_t<\infty.
\)
Hence, Theorem~\ref{thm:AS_miss_subsequence} yields
\(
E_t
\xrightarrow{\mathrm{a.s.}}
0.
\)
In particular,
\(
E_t
\xrightarrow{\mathbb P}
0.
\)
The remaining results
then follows from Theorem~\ref{thm:Stability_ConvProb}.
\end{proof}

The assumption of Corollary~\ref{cor:hat_to_unhat_error} is substantially weaker than requiring uniform contractivity,
\(
\sup_{t\ge1}
\|K_t\|
<
C_K
<
1,
\)
or even
\(
\limsup_{t\to\infty}
\|K_t\|
<
1.
\)
Indeed, it is sufficient that there exists an infinite subsequence of time indices along which the predictive operators become asymptotically contractive. 

%


\section{Examples}\label{sec5}

This section illustrates the results of Sections~\ref{sec3} and~\ref{sec4} through analytical examples. We use Doeblin-type minorization conditions to obtain explicit bounds on the contraction coefficients of predictive operators, and we identify transition kernels for which these bounds can be verified directly. These estimates provide concrete sufficient conditions for the stability and misspecification bounds derived in Sections~\ref{sec3} and~\ref{sec4}.

\subsection{Truncated kernels}
Assume that the transition kernel admits a density. On a compact state space, continuity and strict positivity of this density imply a Doeblin minorization condition, and hence contraction of the associated predictive operator.

\begin{proposition}\label{prop:Doeblin_compact}
Let \(\mathcal X\subset\mathbb R^d\) be compact such that 
\(\int_\mX \sd x'>0\). Assume that \(K\) admits a continuous,
strictly positive density \(p(x,x')\) w.r.t. Lebesgue measure, that is,
\[
K(x,\mathrm dx')
=
p(x,x') \mathrm dx'.
\]
Then \(K\) satisfies a Doeblin minorization condition w.r.t. the uniform probability measure \(\eta\) on
\((\mathcal X,\mathcal B(\mathcal X))\). Consequently,
\(
\|K\|
\leq
1-\kappa
<1,
\)
where
\[
m_p
:=
\min_{(x,x')\in\mathcal X\times\mathcal X}
p(x,x')
>0,
\qquad
\kappa
:=
m_p\,\int_\mX \sd x'
\in(0,1].
\]
\end{proposition}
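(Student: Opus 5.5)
The argument proceeds in three steps: positivity of the minimum density, the minorization inequality, and the contraction bound for the operator norm on \(\mD(\mX)\).

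\textbf{Step 1: \(m_p>0\).} Since \(\mX\times\mX\) is compact and \(p\) is continuous and strictly positive, \(p\) attains its minimum on \(\mX\times\mX\). Hence \(m_p>0\).

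\textbf{Step 2: Doeblin minorization and \(\kappa\in(0,1]\).} Write \(|\mX|:=\int_\mX \sd x'\in(0,\infty)\), which is finite by compactness. Let \(\eta(\sd x')=|\mX|^{-1}\sd x'\). For every \(x\in\mX\) and \(A\in\mB(\mX)\),
\[
K(x,A)=\int_A p(x,x')\,\sd x'\ \ge\ m_p\,|A| = \kappa\,\eta(A).
\]
This is the Doeblin condition with constant \(\kappa=m_p|\mX|\). It also gives \(\kappa\le1\), since taking \(A=\mX\) yields \(1=K(x,\mX)\ge \kappa\). So \(\kappa\in(0,1]\).

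\textbf{Step 3: \(\|K\|\le 1-\kappa\).} If \(\kappa=1\), then \(K(x,\cdot)=\eta\) for every \(x\). Thus \(K(\lambda)=\lambda(\mX)\eta=0\) for every \(\lambda\in\mD(\mX)\), and the bound holds. Otherwise define the residual kernel
\[
R(x,\cdot):=\frac{K(x,\cdot)-\kappa\eta}{1-\kappa}.
\]
By Step 2, \(R\) is a nonnegative kernel with total mass one, hence a Markov kernel. We then have \(K=\kappa\,\eta\,\mathbf 1^\top+(1-\kappa)R\), where the first term acts as \(\lambda\mapsto\kappa\,\lambda(\mX)\,\eta\).

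Every \(\lambda\in\mD(\mX)\) has the form \(\beta(\pi_1-\pi_2)\), so \(\lambda(\mX)=0\). The rank-one term therefore vanishes, giving
\[
K(\lambda)=(1-\kappa)R(\lambda).
\]
By the non-expansiveness of Markov kernels on \(\mD(\mX)\) (Proposition~\ref{MKPK}), \(\|R(\lambda)\|_{\mathrm{TV}}\le\|\lambda\|_{\mathrm{TV}}\). Hence \(\|K(\lambda)\|_{\mathrm{TV}}\le(1-\kappa)\|\lambda\|_{\mathrm{TV}}\). Taking the supremum over \(\lambda\neq0\) gives \(\|K\|\le 1-\kappa<1\).

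The only delicate point is this last step. One has to observe that the minorizing component is annihilated on the zero-mass subspace \(\mD(\mX)\), rather than trying to bound it directly. The rest of the argument is routine.
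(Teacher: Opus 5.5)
Your proof is correct. Steps 1 and 2 match the paper's proof; the difference is in how you go from the minorization to the operator-norm bound.

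The paper cites Theorem~\ref{thm:Doeblin_epsilon}. That theorem uses the same splitting $K(x,\cdot)=\kappa\eta+(1-\kappa)R(x,\cdot)$, but applies it to pairs of rows: $K(x,\cdot)-K(x',\cdot)=(1-\kappa)\bigl(R(x,\cdot)-R(x',\cdot)\bigr)$. This bounds the Dobrushin coefficient $\delta(K)=\sup_{x,x'}\|K(x,\cdot)-K(x',\cdot)\|_{\mathrm{TV}}$ by $1-\kappa$. To conclude $\|K\|\le 1-\kappa$, the paper then implicitly relies on the identity $\|K\|=\delta(K)$ of Proposition~\ref{prop:dobrushin_operator_norm}. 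The nontrivial direction there, $\|K\|\le\delta(K)$, needs a separate Hahn-decomposition and oscillation argument.

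You apply the splitting directly to the operator on $\mD(\mX)$. There the rank-one part $\lambda\mapsto\kappa\,\lambda(\mX)\,\eta$ vanishes, so you only need $\|R\|\le 1$ from Proposition~\ref{MKPK}. Your route is shorter and self-contained, and never passes through $\delta(K)$. The paper's route gives, as a by-product, the bound $\delta(K)\le 1-\kappa$ on the classical Dobrushin coefficient. That is the quantity behind the row-difference criterion used for finite state spaces in Section~\ref{sec5}.
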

\begin{proof}
Since \(\mathcal X\times\mathcal X\) is compact and \(p\) is continuous and
strictly positive, the minimum
\(
m_p
\)
is well defined and satisfies \(m_p>0\).
Hence, for every \(x\in\mathcal X\) and every
\(A\in\mathcal B(\mathcal X)\),
\[
K(x,A)
=
\int_A p(x,x')\mathrm dx'
\geq
m_p\,\int_A \sd x'.
\]
Since
\(
\eta(A)
=
\dfrac{\int_A \sd x'}
{\int_\mX \sd x'},
\)
it follows that
\(
K(x,A)
\geq
m_p\,\int_\mX \sd x'\,\eta(A)
=
\kappa\,\eta(A).
\)
Since \(K(x,\mathcal X)=1\), necessarily \(\kappa\leq1\).
Thus \(K\) satisfies a Doeblin minorization condition with constant
\(\kappa\). 
The estimate
\(
\|K\|
\leq
1-\kappa
\),
then follows from Theorem~\ref{thm:Doeblin_epsilon} in the
Supplement~\ref{ap:Doeblin}.
\end{proof}

\paragraph{Gaussian kernel on a compact domain}
Consider a Gaussian transition kernel restricted to the compact state space
\(\mX\). Proposition~\ref{prop:Doeblin_compact} yields a Doeblin minorization condition $\|K\|\leq 1-\kappa$. We now derive an explicit lower bound for the constant \(\kappa\).

\noindent Let \(\mX\subset\mathbb R^d\) be compact with
\(\int_\mX \sd x'>0\) and  
consider the Markov kernel
\(
K(x,\mathrm dx')
=
p(x,x')\mathrm dx'\)
where
\(
p(x,x')
=
\frac{1}{Z(x)}
\exp\!\left(
-\frac12
(x'-h(x))^\top
\Sigma^{-1}
(x'-h(x))
\right),
\)
with transition function \(h:\mX\to\mX\), \(\Sigma\) positive definite, and
\[
Z(x)
=
\int_{\mX}
\exp\!\left(
-\frac12
(z-h(x))^\top
\Sigma^{-1}
(z-h(x))
\right)
\mathrm dz.
\]
Since \(\mX\) is compact, there exists
\(\rho>0\) such that
\(
\mX
\subseteq
\overline B(0,\rho),
\) where
\(
\overline B(0,\rho)
:=
\{x\in\mathbb R^d:\|x\|\le \rho\}
\).
Since \(x',h(x)\in\mX\) we have
\(
\|(x'-h(x))\Sigma^{-1/2} \|
\leq
2\rho \|\Sigma^{-1/2}\|\leq 2\rho/ \sqrt{\lambda_{min}(\Sigma)} .
\)
Moreover, the integrand defining \(Z(x)\) is bounded above by \(1\), so
\(
Z(x)
\leq
\int_\mX \sd x'.
\)
Then
\[
m_p
=
\min_{(x,x')\in\mX\times\mX}
p(x,x')
\geq
\frac{1}
{\int_\mX \sd x'}
\exp\!\left(
-\frac{2\rho^2}
{\lambda_{\min}(\Sigma)}
\right).
\]
Therefore
\(
\kappa
=
m_p\,\int_\mX \sd x'
\geq
\exp\!\left(
-\frac{2\rho^2}
{\lambda_{\min}(\Sigma)}
\right).
\)

\subsection{Bounded transition functions}\label{sec:state_dynamics_model}

We next consider SSMs whose state dynamics are given by a uniformly bounded transition map perturbed by additive noise. The associated transition kernels have densities in an elliptically symmetric family. The objective is to show that boundedness of the transition function yields a Doeblin-type minorization condition, and hence contraction through the associated Dobrushin coefficients. Unlike above, the state space \(\mX\) need not be compact.

\subsubsection{Elliptically symmetric transition family}
Assume that the transition kernel $K$ admits a density of the form
\begin{equation}
p(x\mid x')
=
\phi\!\big(
\psi(
\|x-h(x')\|_{\Sigma^{-1}}
)
\big),
\end{equation}
where
\(
\|z\|_{\Sigma^{-1}}
:=
\sqrt{z^\top \Sigma^{-1} z},
\)
\(\Sigma\in\mbR^{d_x\times d_x}\) is symmetric positive definite, \(\phi:\mbR\to(0,\infty)\) is non-increasing, and
\(
\psi:[0,\infty)\to\mbR
\)
is non-decreasing.
Densities of this form belong to the family of elliptically symmetric densities \cite{delmas2024elliptically}; Gaussian kernels are a particular case. If \(h\) is uniformly bounded, then the density argument is uniformly controlled over the state space, which yields a uniform lower bound and a Doeblin-type minorization condition.

\paragraph{Bounded transition function}
Assume that the transition map
\(
h:\mX\to\mX
\)
is uniformly bounded in the Euclidean norm, namely,
\begin{equation}\label{ineq:h_bound}
M
:=
\sup_{x\in\mX}
\|h(x)\|
<
\infty.
\end{equation}
Let \(\Sigma^{1/2}\) denote the unique symmetric positive definite square root of \(\Sigma\), so we can express
\(
\|x\|_{\Sigma^{-1}}
=
\|\Sigma^{-1/2}x\|.
\)
If \(\lambda_{\min}(\Sigma)\) denotes the minimum eigenvalue of \(\Sigma\), then the induced operator norm of \(\Sigma^{-1/2}\) satisfies
\(
\|\Sigma^{-1/2}\|
=
(\lambda_{\min}(\Sigma))^{-1/2}.
\)
We define the corresponding bound for $h$ in the \(\Sigma^{-1}\)-norm,
\[
M_\Sigma
:=
\sup_{x\in\mX}
\|h(x)\|_{\Sigma^{-1}}
\leq
\frac{M}{\sqrt{\lambda_{\min}(\Sigma)}}.
\]
We have the following 
\begin{theorem}\label{thm:Dobrushin_Doeblin}
Assume that the boundedness condition \eqref{ineq:h_bound} holds, and that the transition kernels belong to the class of elliptically symmetric transition families introduced above. Then  
\(
\|K\|
\leq
1-\kappa,
\) where
\begin{equation}\label{eq:kappa}
\kappa
:=
\int_{\mX}
\phi\!\big(
\psi(
\|x\|_{\Sigma^{-1}}+M_\Sigma
)
\big)\,
\sd x.
\end{equation}
\end{theorem}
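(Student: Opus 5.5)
The plan is to reproduce the argument of Proposition~\ref{prop:Doeblin_compact}. We exhibit a Doeblin minorization $K(x',A)\geq \kappa\,\eta(A)$ for a probability measure $\eta$ on $(\mX,\mB(\mX))$, and then invoke Theorem~\ref{thm:Doeblin_epsilon} of Supplement~\ref{ap:Doeblin} to conclude $\|K\|\leq 1-\kappa$. Since $\mX$ need not be compact, we cannot take a minimum of the density. We instead build a pointwise lower envelope of $p(x\mid x')$ that is uniform in the conditioning point $x'$, using only the boundedness of $h$ and the monotonicity of $\phi$ and $\psi$.

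\textbf{Step 1 (uniform envelope).} For any $x,x'\in\mX$, the triangle inequality for the norm $\|\cdot\|_{\Sigma^{-1}}=\|\Sigma^{-1/2}\cdot\|$ gives
\[
\|x-h(x')\|_{\Sigma^{-1}}\leq \|x\|_{\Sigma^{-1}}+\|h(x')\|_{\Sigma^{-1}}\leq \|x\|_{\Sigma^{-1}}+M_\Sigma .
\]
Because $\psi$ is non-decreasing, $\psi(\|x-h(x')\|_{\Sigma^{-1}})\leq\psi(\|x\|_{\Sigma^{-1}}+M_\Sigma)$. Because $\phi$ is non-increasing, this yields
\[
p(x\mid x')\geq q(x):=\phi\big(\psi(\|x\|_{\Sigma^{-1}}+M_\Sigma)\big)>0 \qquad \forall x,x'\in\mX .
\]
The function $q$ is measurable: it is a composition of monotone functions, which are Borel, with a continuous function.

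\textbf{Step 2 (minorization).} Integrating over $A\in\mB(\mX)$ gives $K(x',A)\geq\int_A q(x)\,\sd x$ for all $x'$. Taking $A=\mX$ shows $\kappa=\int_\mX q\,\sd x\leq K(x',\mX)=1$. If $\kappa=0$ the bound $\|K\|\leq 1$ is already known, so assume $\kappa\in(0,1]$. Then $\eta(\sd x):=\kappa^{-1}q(x)\,\sd x$ is a probability measure on $\mX$, and $K(x',A)\geq\kappa\,\eta(A)$ uniformly in $x'$. This is exactly the Doeblin condition with constant $\kappa$ given in \eqref{eq:kappa}.

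\textbf{Step 3 (contraction).} Apply Theorem~\ref{thm:Doeblin_epsilon} to obtain $\|K\|\leq1-\kappa$, in the same way as at the end of Proposition~\ref{prop:Doeblin_compact}.

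The only delicate point is the bookkeeping in Step 2. The minorizing measure is now the non-uniform density $q/\kappa$ rather than a normalized Lebesgue measure, and the degenerate case $\kappa=0$ must be dispatched separately. Beyond that, the main obstacle is just checking that the triangle inequality is applied in the correct direction, so that the monotonicity of $\phi\circ\psi$ produces a lower bound. This works because $\phi\circ\psi$ is non-increasing in its argument, so enlarging the argument decreases the value.
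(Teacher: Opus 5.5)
Your proposal is correct and follows the paper's proof essentially step for step. It uses the same triangle-inequality envelope $q(x)=\phi\big(\psi(\|x\|_{\Sigma^{-1}}+M_\Sigma)\big)$ and the same minorizing probability measure $\eta(\mathrm dx)=\kappa^{-1}q(x)\,\mathrm dx$, and it concludes with the same appeal to Theorem~\ref{thm:Doeblin_epsilon}, implicitly combined, as in the paper, with the identity $\|K\|=\delta(K)$ from Proposition~\ref{prop:dobrushin_operator_norm}. Your measurability remark and your separate treatment of $\kappa=0$ are harmless extra care; the paper simply asserts $\kappa>0$, which follows from $q>0$ and the fact that $\mX$ must have positive Lebesgue measure for $p(\cdot\mid x')$ to integrate to one.
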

For the proof, see the Appendix~\ref{ap:proof_thm:Dobrushin_Doeblin}.

\begin{remark}\label{rem:Colour_noises}
The following transition models fit within the class of elliptically symmetric transition kernels considered above.

\noindent\emph{Exponential-type radial profile.}
Let
\(
\phi(s)=C\,\mathrm e^{-s},
\)
where \(C>0\) is a normalization constant.

\begin{center}
\begin{tabular}{ll}
\hline
\textbf{Transition model} & \textbf{Radial deformation \(\psi(r)\)} \\
\hline
Gaussian
& \(\tfrac12 r^2\) \\[0.4em]

Generalized Gaussian / exponential power
& \(r^\beta,\qquad \beta>0\) \\[0.4em]

Laplace
& \(r\) \\[0.4em]

Sub-Gaussian
& \(c r^2,\qquad c>0\) \\[0.4em]
\hline
\end{tabular}
\end{center}

\noindent\emph{Polynomial heavy-tailed radial profile.}
Let
\(
\phi(s)=C\,s^{-\alpha},
\)
where \(C>0\) is a normalization constant.

\begin{center}
\begin{tabular}{lll}
\hline
\textbf{Transition model}
& \textbf{Radial deformation \(\psi(r)\)}
& \textbf{Tail parameter \(\alpha\)} \\
\hline
Student--\(t\) \((\nu>0)\)
& \(1+\tfrac{1}{\nu}r^2\)
& \(\tfrac{\nu+d_x}{2}\) \\[0.4em]

Cauchy
& \(1+r^2\)
& \(\tfrac{d_x+1}{2}\) \\[0.4em]

Pearson type VII \((\lambda>0)\)
& \(1+\tfrac{1}{\lambda}r^2\)
& \(\alpha>0\) \\[0.4em]

Generalized Cauchy / rational quadratic
& \(1+r^p,\qquad p\in\mathbb N\)
& \(\alpha>0\) \\[0.4em]
\hline
\end{tabular}
\end{center}
\end{remark}

The minorization constant \(\kappa\) in \eqref{eq:kappa} quantifies the contraction strength of the dynamics. For several radial profiles it can be evaluated explicitly.

\begin{proposition}\label{prop:epsilon}
Under the assumptions of Theorem~\ref{thm:Dobrushin_Doeblin}, assume in
addition that \(\mX=\mbR^{d_x}\). Then, the minorization constant  
\[
\kappa
=
\int_{\mbR^{d_x}}
\phi\!\left(
\psi \!\left(
\|x\|_{\Sigma^{-1}}
+
M_\Sigma
\right)
\right)\,
\mathrm dx, \qquad M_\Sigma=\frac{M}{\sqrt{\lambda_{\min}(\Sigma)}}.
\]
admits an explicit expressions for the classes listed in Remark~\ref{rem:Colour_noises}.

\noindent\textit{Case 1: Exponential radial profiles.}
\(
\phi(s)=C\,\mathrm e^{-s},
\) and \(\psi(r)=ar^\beta\), where \(\mathrm C,a,\beta>0\) yields
\begin{equation}\label{eq:epsilon_Exp}
\kappa
=
S_R^{d_x}\,C
\sum_{k=0}^{d_x-1}
\binom{d_x-1}{k}
(-M_\Sigma)^{d_x-1-k}
\frac{1}{\beta}
a^{-(k+1)/\beta}
\Gamma\!\left(
\frac{k+1}{\beta},
aM_\Sigma^\beta
\right),
\end{equation}
where \(\Gamma(\cdot,\cdot)\) denotes the upper incomplete gamma function.

\noindent\textit{Case 2: Polynomial-tail radial profiles.}
\(\phi(s)=\mathrm C\,s^{-\alpha}\) and \(\psi(r)=1+ar^2\), where \(\alpha,a>0\), yields
\begin{equation}\label{eq:epsilon_Pol}
\kappa
=
\frac12
S_R^{d_x}\,C
\sum_{k=0}^{d_x-1}
\binom{d_x-1}{k}
(-M_\Sigma)^{d_x-1-k}
I_k(M_R),
\end{equation}
where 
\[
I_k(M_R)
=
\frac{1}{2}
a^{-(k+1)/2}
\left[
B\!\left(
\frac{k+1}{2},
\alpha-\frac{k+1}{2}
\right)
-
B_{z_M}\!\left(
\frac{k+1}{2},
\alpha-\frac{k+1}{2}
\right)
\right],
\]
and 
\[
z_M
=
\frac{aM_R^2}{1+aM_R^2}.
\]
Here \(B\) denotes the Beta function, (and \(B_{z_M}\) denotes the lower incomplete beta function) where
\[
\frac12
a^{-d_x/2}
B\!\Big(
\frac{d_x}{2},
\alpha-\frac{d_x}{2}
\Big)
=
\int_0^\infty
u^{d_x-1}
(1+au^2)^{-\alpha}\,
\mathrm du,
\]
which is finite whenever \(\alpha>d_x/2\).
Moreover, \(\kappa\) is non-decreasing w.r.t.
\(\lambda_{\min}(\Sigma)\) and non-increasing w.r.t. \(M\).
Thus, increasing the smallest eigenvalue of the state-noise covariance
matrix, or decreasing the uniform bound on the transition function \(h\), could
increases the minorization constant and therefore strengthens the
corresponding Dobrushin contraction bound.
\end{proposition}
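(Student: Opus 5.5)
The plan is to reduce the $d_x$-dimensional integral defining $\kappa$ to a one-dimensional radial integral, and then expand binomially. First I will change variables $x=\Sigma^{1/2}u$. This gives $\mathrm dx=\det(\Sigma)^{1/2}\mathrm du$ and $\|x\|_{\Sigma^{-1}}=\|u\|$. I will then pass to polar coordinates $u=r\theta$, $r>0$, $\theta\in\mathbb S^{d_x-1}$, which yields
\[
\kappa=S_R^{d_x}\int_0^\infty r^{d_x-1}\,\phi\big(\psi(r+M_\Sigma)\big)\,\mathrm dr .
\]
Here $S_R^{d_x}$ absorbs the surface area $2\pi^{d_x/2}/\Gamma(d_x/2)$ of the unit sphere and the Jacobian factor $\det(\Sigma)^{1/2}$; I will take this to be the meaning of the constant $S_R^{d_x}$ in the statement. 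Next I substitute $s=r+M_\Sigma$, so $s\in[M_\Sigma,\infty)$. Expanding $(s-M_\Sigma)^{d_x-1}=\sum_{k}\binom{d_x-1}{k}s^k(-M_\Sigma)^{d_x-1-k}$ reduces everything to the moment integrals $J_k:=\int_{M_\Sigma}^\infty s^k\phi(\psi(s))\,\mathrm ds$. Each of these is finite, by integrability of the density (for Case 2 this uses $\alpha>d_x/2$), so interchanging the finite sum and the integral is harmless.

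\emph{Case 1.} Here $J_k=C\int_{M_\Sigma}^\infty s^k e^{-as^\beta}\mathrm ds$. The substitution $v=as^\beta$, with $s=(v/a)^{1/\beta}$ and $\mathrm ds=\frac1\beta a^{-1/\beta}v^{1/\beta-1}\mathrm dv$, gives $J_k=\frac{C}{\beta}a^{-(k+1)/\beta}\Gamma\big(\frac{k+1}{\beta},aM_\Sigma^\beta\big)$. Summing these yields \eqref{eq:epsilon_Exp}.

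\emph{Case 2.} Here $J_k=C\int_{M_\Sigma}^\infty s^k(1+as^2)^{-\alpha}\mathrm ds$. I will substitute $z=as^2/(1+as^2)$, so that $1+as^2=(1-z)^{-1}$ and $s=a^{-1/2}z^{1/2}(1-z)^{-1/2}$. A routine computation gives $s^k(1+as^2)^{-\alpha}\mathrm ds=\frac12 a^{-(k+1)/2}z^{\frac{k+1}{2}-1}(1-z)^{\alpha-\frac{k+1}{2}-1}\mathrm dz$. The range $[M_\Sigma,\infty)$ maps to $[z_M,1)$, so the integral equals the complete Beta function minus the lower incomplete one $B_{z_M}$, which is $I_k$. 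This gives \eqref{eq:epsilon_Pol}, up to bookkeeping of the factor $\tfrac12$ and the notation $M_R$, which I read as $M_\Sigma$. I expect this substitution, and matching the stated constants, to be the most error-prone step. The side identity for $\int_0^\infty u^{d_x-1}(1+au^2)^{-\alpha}\mathrm du$ is the case $z_M=0$, and it converges exactly when $\alpha-d_x/2>0$.

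\emph{Monotonicity.} Here I will not use the alternating-sign closed forms, which are awkward, and will go back to the integral $\kappa=\int\phi(\psi(\|x\|_{\Sigma^{-1}}+M_\Sigma))\,\mathrm dx$. Since $\psi$ is non-decreasing and $\phi$ is non-increasing, the integrand is pointwise non-increasing in $M_\Sigma$. As $M_\Sigma=M/\sqrt{\lambda_{\min}(\Sigma)}$, the quantity $\kappa$ is non-increasing in $M$ and non-decreasing in $\lambda_{\min}(\Sigma)$, at least for the dependence through $M_\Sigma$. The main obstacle is the remaining dependence on $\Sigma$ through $\|x\|_{\Sigma^{-1}}$ and the normalization (the determinant factor together with $C$). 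I would handle it by arguing at fixed normalized noise shape: $\kappa$ is then the mass $\int\phi(\psi(\|u\|+M_\Sigma))\,\mathrm du$ in whitened coordinates, where only $M_\Sigma$ varies. This is consistent with the hedged wording ``could increase'' in the statement.
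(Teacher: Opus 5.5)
Your proposal is correct and follows essentially the paper's route (radial reduction to $S_R^{d_x}\int_0^\infty r^{d_x-1}\phi(\psi(r+M_\Sigma))\,\mathrm dr$, the shift $u=r+M_\Sigma$, binomial expansion, incomplete Gamma/Beta evaluation, and monotonicity from the pointwise monotonicity of $\phi\circ\psi$ in $M_\Sigma$). Your substitution $z=as^2/(1+as^2)$ in addition supplies the incomplete-Beta step that the paper only asserts.

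Two refinements. First, the $\tfrac12$ is not bookkeeping. Your computation correctly gives $\kappa=S_R^{d_x}C\sum_{k}\binom{d_x-1}{k}(-M_\Sigma)^{d_x-1-k}I_k$, so the outer $\tfrac12$ in \eqref{eq:epsilon_Pol} is a factor-$2$ error in the statement as printed: at $M_\Sigma=0$, normalization forces $\kappa=1$, whereas the printed formula gives $\tfrac12$. Second, your determinant worry is moot. Normalization of $p(\cdot\mid x')$ makes $C\det(\Sigma)^{1/2}$ depend only on the shape $(\phi,\psi)$, so $\kappa=\int\phi(\psi(\|u\|+M_\Sigma))\,\mathrm du\,/\int\phi(\psi(\|u\|))\,\mathrm du$ depends on $\Sigma$ only through $M_\Sigma$.
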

A proof is provided in Appendix~\ref{ap:Proof_Proposition}.

\subsection{Finite state spaces}

For finite state spaces, contraction can be read directly from the transition matrix. If \(\mX=\{x_1,\dots,x_M\}\) and \(P=(P_{ij})_{i,j\in\mX}\) is a stochastic matrix, then Lemma 3 in \cite{kratochvil1985contractive} yields
\[
\|P\|
\leq
\frac12
\max_{i,j\in\mX}
\|r_i-r_j\|_1,
\]
where
\(
r_i
:=
(P_{i1},\dots,P_{iM})
\)
denotes the \(i\)-th row of \(P\), and
\(
\|v\|_1
=
\sum_{k=1}^M |v_k|
\)
is the usual \(\ell^1\)-norm. Therefore, contraction in total variation is governed by the pairwise similarity of the rows of the transition matrix: if the rows remain sufficiently close in \(\ell^1\), the associated predictive operator is contractive.

%
\section{Conclusions}\label{sec6}

This work has developed an operator-theoretic framework for Bayesian filtering under model misspecification in a random-observation setting. By treating observations as random variables generated by the state-space model, rather than as fixed realizations, we have shown that averaging over the observation process yields a tractable linear recursion for the mean filtering measure.

The analysis has relied on a decomposition of the filtering recursion into prediction and update operators. Within this framework, we have established stability results for the optimal random filter and have shown that, after averaging over the observation process, filtering-error propagation is governed by the predictive dynamics. Stability has therefore been characterized through the contractive properties of predictive operators, with conditions linked directly to the Dobrushin coefficients of the underlying Markov kernels.

We have also obtained a probabilistic characterization of stability in expectation, proving its equivalence to convergence in probability of the filtering error process. Under suitable contraction assumptions, this error has a supermartingale structure, which has allowed stability in expectation to be strengthened to almost sure convergence.

For misspecified dynamics, we have introduced perturbed predictive operators and have analyzed the discrepancy between the optimal and misspecified filters under both uniform and trajectory-dependent perturbation assumptions. The resulting finite-time and asymptotic bounds show that, under contraction, the filtering error remains proportional to the dynamical discrepancy, even when misspecification introduces a persistent error at each step. We have also shown that summable time-dependent perturbation errors are eventually dominated by the contractive predictive dynamics, leading to almost sure coalescence of the optimal and misspecified filtering recursions. Finally, we have illustrated the theory with analytical examples based on Doeblin-type minorization conditions. These examples have yielded explicit contraction estimates for compactly supported Gaussian models, elliptically symmetric transition families with bounded transition functions, and finite-state Markov chains.

Overall, the results have shown that the operator-theoretic viewpoint provides a unified framework for studying stability, robustness, and error propagation in Bayesian filtering. The framework has clarified the respective roles of prediction, updating, and random observations, and provides a basis for the analysis of perturbed and approximate filtering models.


\begin{funding}
JM and FG acknowledge the support of the Office of Naval Research (award N00014-22-1-2647) and Spain's \textit{Agencia Estatal de Investigaci\'on} (ref. PID2021-125159NB-I00 TYCHE) funded by MCIN/AEI/10.13039/501100011033 and by ``ERDF A way of making Europe".

The work of DC was partially supported by the European Research Council (ERC) under the European Union’s Horizon 2020 Research and Innovation Programme (ERC, Grant Agreement No 856408).

\end{funding}


\appendix

%
\section*{Appendix}\label{Ap:Op_A}

\addcontentsline{toc}{section}{Appendix}

%

\section{Technical proofs of Section \ref{sec2}}
\subsection{Proof of Proposition \ref{prop:Ypdf}} \label{proof:Ypdf}

For each fixed \(y_{1:t-1}\in\mathcal Y^{t-1}\), define the function
\[
\psi(y_{1:t-1})
:=
\int_{\mathcal Y}
f(y_{1:t-1},\mathrm{\mathrm{y_t}})\,
p_{\mathrm{y_t}\mid y_{1:t-1}}( \mathrm{\mathrm{y_t}})
\,\mathrm d \mathrm{\mathrm{y_t}} .
\]
We show that \(\psi(Y_{1:t-1})\) is a version of
\(\mathbb E[f(Y_{1:t})\mid \mathcal G_{t-1}]\).
Let \(h:\mathcal Y^{t-1}\to\mathbb R\) be any bounded measurable function. By Fubini--Tonelli and the factorization of the joint density,
\[
p_{Y_{1:t}}(y_{1:t-1},\mathrm{\mathrm{y_t}})
=
p_{\mathrm{y_t}\mid y_{1:t-1}}(\mathrm{\mathrm{y_t}})\,
p_{Y_{1:t-1}}(y_{1:t-1}),
\]
we obtain
\[
\mathbb E\!\left[h(Y_{1:t-1})f(Y_{1:t})\right]
=
\mathbb E\!\left[
h(Y_{1:t-1})\psi(Y_{1:t-1})
\right].
\]
In particular, this identity holds for \(h=\mathbbm{1}_A\), with
\(A\in\mathcal G_{t-1}\), i.e. 
\[
\int_A f \sd \mbP = \int_A \psi \sd \mbP, \qquad \forall A\in \mG_{t-1}.
\]
Therefore,
\[
\mathbb E[f(Y_{1:t})\mid \mathcal G_{t-1}]
=
\psi(Y_{1:t-1})
\quad \text{a.s.}
\]
Finally, by Remark \ref{remark:likelihood_identity},
\[
\psi(Y_{1:t-1})
=
\int_{\mathcal Y}
f(Y_{1:t-1},\mathrm{\mathrm{y_t}})\,
\xi_{Y_{1:t-1}}(g_{\mathrm{\mathrm{y_t}}})
\,\mathrm d \mathrm{\mathrm{y_t}},
\]
which completes the proof.
\qed 

\subsection{Proof of Proposition \ref{DUI}} \label{proof:DUI}
We write
\[
U_{\mathrm{y_t}}(\mu) - U_{\mathrm{y_t}}(\nu)
= \frac{1}{\mu(g_{\mathrm{y_t}})}
\Big[
g_{\mathrm{y_t}}\, (\mu - \nu)
+ U_{\mathrm{y_t}}(\nu)\, \big(\nu(g_{\mathrm{y_t}}) - \mu(g_{\mathrm{y_t}})\big)
\Big].
\]
Taking the total variation norm and using its dual characterization,
\begin{align*}
&\|U_{\mathrm{y_t}}(\mu) - U_{\mathrm{y_t}}(\nu)\|_{\mathrm{TV}} 
= \\ 
&\frac{1}{\mu(g_{\mathrm{y_t}})}
\sup_{0\leq f \leq 1}
\Bigg|
\int f(x) g_{\mathrm{y_t}}(x)\, (\mu - \nu)(\mathrm dx)
+ U_{\mathrm{y_t}}(\nu)(f)\, \big(\nu(g_{\mathrm{y_t}}) - \mu(g_{\mathrm{y_t}})\big)
\Bigg| \\
&\leq \frac{1}{\mu(g_{\mathrm{y_t}})}
\left(
\int_{\mathcal X} g_{\mathrm{y_t}}(x)\, |\mu - \nu|(\mathrm dx)
+ |\nu(g_{\mathrm{y_t}}) - \mu(g_{\mathrm{y_t}})|
\right),
\end{align*}
which concludes the proof.
\qed

\subsection{Proof of Proposition \ref{prop:Link_Op}}\label{proof:Link_Op}
 $L_t$ is trivially linear by definition. 
 Let \(\lambda \in \mathcal{D}(\mathcal{X})\), and let \((A,B)\) be a Hahn decomposition of \(\mathcal{X}\) w.r.t. \(\lambda\). For any $F \in \mathcal B(\mathcal Y)$,
\begin{align*}
L_t(\lambda)(F)
&= \int_F \int_{\mathcal X} g_{\mathrm{\mathrm{y_t}}}(x)\, \lambda(\mathrm dx)\, \mathrm d\mathrm{\mathrm{y_t}} \\
&= \int_F \left( \int_A g_{\mathrm{\mathrm{y_t}}}(x)\, \lambda(\mathrm dx)
+ \int_B g_{\mathrm{\mathrm{y_t}}}(x)\, \lambda(\mathrm dx) \right)\mathrm d\mathrm{\mathrm{y_t}} \\
&\leq \int_A \left( \int_{\mathcal Y} g_{\mathrm{\mathrm{y_t}}}(x)\, \mathrm d\mathrm{\mathrm{y_t}} \right) \lambda(\mathrm dx)
= \lambda(A).
\end{align*}
Hence, by Proposition~\ref{NPN} in Appendix~\ref{app:A}
\[
\|L_t(\lambda)\|_{\mathrm{TV}} \leq \|\lambda\|_{\mathrm{TV}},
\]
and therefore its induced norm satisfies
\[
\|L_t\| :=
\sup_{\substack{\lambda \in \mathcal{D}(\mathcal{X}) \\ \lambda \neq 0}}
\frac{\|L_t(\lambda)\|_{\mathrm{TV}}}{\|\lambda\|_{\mathrm{TV}}} \leq 1.
\]
That is, $L_t$ is non expansive.
\qed

\subsection{Proof of Proposition \ref{prop:normalizing_constant_positive}}\label{proof:normalizing_constant_positive}
Define
\[
B_\mu
:=
\{\mathrm{\mathrm{y_t}}\in\mathcal Y:\mu(g_{\mathrm{\mathrm{y_t}}})=0\},
\qquad
B_\nu
:=
\{\mathrm{\mathrm{y_t}}\in\mathcal Y:\nu(g_{\mathrm{\mathrm{y_t}}})=0\}.
\]
Since
\(
L_t(\mu)(d\mathrm{\mathrm{y_t}})
=
\mu(g_{\mathrm{\mathrm{y_t}}})\,d\mathrm{\mathrm{y_t}},
\)
we have
\[
L_t(\mu)(B_\mu)
=
\int_{B_\mu}\mu(g_{\mathrm{\mathrm{y_t}}})\,d\mathrm{\mathrm{y_t}}
=
0.
\]
Hence \(\mu(g_{\mathrm{\mathrm{y_t}}})>0\) \(L_t(\mu)\)-almost surely.
It remains to prove the same statement for \(\nu(g_{\mathrm{\mathrm{y_t}}})\). Let
\(\mathrm{\mathrm{y_t}}\in B_\nu\). Then
\[
\int_{\mathcal X} g_{\mathrm{\mathrm{y_t}}}(x)\,\nu(dx)=0.
\]
Since \(g_{\mathrm{\mathrm{y_t}}}(x)\geq0\), it follows that
\(
g_{\mathrm{\mathrm{y_t}}}(x)=0,
\; \nu\text{-a.s.}
\)
As \(\mu\ll\nu\), every \(\nu\)-null set is also \(\mu\)-null, and therefore
\(
g_{\mathrm{\mathrm{y_t}}}(x)=0,
\; \mu\text{-a.s.}
\)
Consequently,
\[
\mu(g_{\mathrm{\mathrm{y_t}}})
=
\int_{\mathcal X} g_{\mathrm{\mathrm{y_t}}}(x)\,\mu(dx)
=
0.
\]
Thus \(B_\nu\subseteq B_\mu\). Hence
\[
L_t(\mu)(B_\nu)
\leq
L_t(\mu)(B_\mu)
=
0.
\]
Therefore \(\nu(g_{\mathrm{\mathrm{y_t}}})>0\) \(L_t(\mu)\)-almost surely. This proves the claim.
\qed

%

\section{Technical proofs of Section \ref{sec3}}

\subsection{Proof of Proposition \ref{prop:ExTV}}\label{proof:ExTV} Let \(\mu,\nu\in\mP(\mX)\).
Define the measures $\mu_{\mathrm{y_t}},\nu_{\mathrm{y_t}} \in \mbM(\mX)$ by
\[
\mu_{\mathrm{y_t}}(A) := \int_A g_{\mathrm{y_t}}(x)\,\mu(\mathrm d x),
\qquad
\nu_{\mathrm{y_t}}(A) := \int_A g_{\mathrm{y_t}}(x)\,\nu(\mathrm d x),
\qquad A\in \mB(\mX),
\]
and let $\lambda_{\mathrm{y_t}} := U_{\mathrm{y_t}}(\mu)-U_{\mathrm{y_t}}(\nu)$. Then
\begin{align}
\lambda_{\mathrm{y_t}}
&= \frac{1}{\mu(g_{\mathrm{y_t}})}
\Bigl[(\mu_{\mathrm{y_t}}-\nu_{\mathrm{y_t}})
+ U_{\mathrm{y_t}}(\nu)\bigl(\nu(g_{\mathrm{y_t}})-\mu(g_{\mathrm{y_t}})\bigr)\Bigr],
\label{eq:lambda} \\
&= \frac{-1}{\nu(g_{\mathrm{y_t}})}
\Bigl[(\nu_{\mathrm{y_t}}-\mu_{\mathrm{y_t}})
+ U_{\mathrm{y_t}}(\mu)\bigl(\mu(g_{\mathrm{y_t}})-\nu(g_{\mathrm{y_t}})\bigr)\Bigr].
\label{eq:-lambda}
\end{align}
Let
\[
\mY^+ := \{\mathrm{y_t} \in \mY : \nu(g_{\mathrm{y_t}}) \le \mu(g_{\mathrm{y_t}})\},
\qquad
\mY^- := \{\mathrm{y_t} \in \mY : \nu(g_{\mathrm{y_t}}) > \mu(g_{\mathrm{y_t}})\},
\]
so that
\(
\mY^+ \cup \mY^- = \mY,\) and 
\(
\mY^+ \cap \mY^- = \emptyset.
\)
Fix \(\mathrm{y_t} \in \mY\), and let \((A_{\mathrm{y_t}},B_{\mathrm{y_t}})\) be a Hahn decomposition of \(\mX\) w.r.t. the signed measure \(\lambda_{\mathrm{y_t}}\). We distinguish two cases.

\noindent\emph{Case 1:} \(\mathrm{y_t} \in \mY^+\). Then \(\nu(g_{\mathrm{y_t}})-\mu(g_{\mathrm{y_t}}) \le 0\). Using \eqref{eq:lambda},
\begin{align*}
\|\lambda_{\mathrm{y_t}}\|_{\mathrm{TV}}
&= \lambda_{\mathrm{y_t}}(A_{\mathrm{y_t}})
\le \frac{1}{\mu(g_{\mathrm{y_t}})}(\mu_{\mathrm{y_t}}-\nu_{\mathrm{y_t}})(A_{\mathrm{y_t}})
\le \frac{1}{\mu(g_{\mathrm{y_t}})}
\abs{(\mu_{\mathrm{y_t}}-\nu_{\mathrm{y_t}})(A_{\mathrm{y_t}})}  \\
&= \frac{1}{\mu(g_{\mathrm{y_t}})}
\Bigl|\int_{A_{\mathrm{y_t}}} g_{\mathrm{y_t}}(x)\,(\mu-\nu)(\mathrm d x)\Bigr|
\le \frac{1}{\mu(g_{\mathrm{y_t}})}
\int_{\mX} g_{\mathrm{y_t}}(x)\,\abs{\mu-\nu}(\mathrm d x). 
\end{align*}

\noindent\emph{Case 2:} \(\mathrm{y_t} \in \mY^-\). Then \(\mu(g_{\mathrm{y_t}}) - \nu(g_{\mathrm{y_t}}) < 0\), and consequently \(\nu(g_{\mathrm{y_t}})^{-1} < \mu(g_{\mathrm{y_t}})^{-1}\). Using \eqref{eq:-lambda},
\begin{align*}
\|\lambda_{\mathrm{y_t}}\|_{\mathrm{TV}}
&= -\lambda_{\mathrm{y_t}}(B_{\mathrm{y_t}})
\le \frac{1}{\nu(g_{\mathrm{y_t}})}(\nu_{\mathrm{y_t}}-\mu_{\mathrm{y_t}})(B_{\mathrm{y_t}})
\le \frac{1}{\mu(g_{\mathrm{y_t}})}
\abs{(\nu_{\mathrm{y_t}}-\mu_{\mathrm{y_t}})(B_{\mathrm{y_t}})}  \\
&= \frac{1}{\mu(g_{\mathrm{y_t}})}
\Bigl|\int_{B_{\mathrm{y_t}}} g_{\mathrm{y_t}}(x)\,(\nu-\mu)(\mathrm d x)\Bigr|
\le \frac{1}{\mu(g_{\mathrm{y_t}})}
\int_{\mX} g_{\mathrm{y_t}}(x)\,\abs{\mu-\nu}(\mathrm d x).
\end{align*}
In both cases,
\[
\|\lambda_{\mathrm{y_t}}\|_{\mathrm{TV}}
\le \frac{1}{\mu(g_{\mathrm{y_t}})}
\int_{\mX} g_{\mathrm{y_t}}(x)\,\abs{\mu-\nu}(\mathrm d x).
\]

\noindent The previous estimate is pointwise in the current observation \(\mathrm{y_t}\), and
depends only on its value. For every realization
\(Y_{1:t-1}=y_{1:t-1}\), the measures
\(\mu_{y_{1:t-1}},\nu_{y_{1:t-1}}\in\mP(\mX)\) are fixed deterministic
probability measures. Hence, for every
\(\mathrm{y_t}\in\mY\),
\[
\begin{aligned}
&\left\|
U_{\mathrm{y_t}}(\mu_{y_{1:t-1}})
-
U_{\mathrm{y_t}}(\nu_{y_{1:t-1}})
\right\|_{\mathrm{TV}}
\leq
\frac{1}{\mu_{y_{1:t-1}}(g_{\mathrm{y_t}})}
\int_{\mX}
g_{\mathrm{y_t}}(x)\,
\left|
\mu_{y_{1:t-1}}
-
\nu_{y_{1:t-1}}
\right|(\mathrm dx).
\end{aligned}
\]
Consequently, evaluating at the random path \(Y_{1:t-1}\), we obtain
\[
\left\|
U_{\mathrm{y_t}}(\mu_{Y_{1:t-1}})
-
U_{\mathrm{y_t}}(\nu_{Y_{1:t-1}})
\right\|_{\mathrm{TV}}
\leq
\frac{1}{\mu_{Y_{1:t-1}}(g_{\mathrm{y_t}})}
\int_{\mX}
g_{\mathrm{y_t}}(x)\,
\left|
\mu_{Y_{1:t-1}}
-
\nu_{Y_{1:t-1}}
\right|(\mathrm dx),
\]
almost surely. Consequently, the inequality holds \(\mathbb P\)-almost surely when
\(\mu\) and \(\nu\) are replaced by random probability measures of the form
\(\mu_{Y_{1:t-1}}\) and \(\nu_{Y_{1:t-1}}\).
\qed

\subsection{Proof of Lemma \ref{lemma:ExUpd}}\label{proof:ExUTV}

By definition of expectation w.r.t. $L_t(\mu_{Y_{1:t-1}})$,
\begin{align*}
&\mathbb{E}^{L_t(\mu_{Y_{1:t-1}})}\!\left[
\|U_{\mathrm{y_t}}(\mu_{Y_{1:t-1}})-U_{\mathrm{y_t}}(\nu_{Y_{1:t-1}})\|_{\mathrm{TV}}
\right]
= \\
&\int_{\mY}
\|U_{\mathrm{\mathrm{y_t}}}(\mu_{Y_{1:t-1}})-U_{\mathrm{\mathrm{y_t}}}(\nu_{Y_{1:t-1}})\|_{\mathrm{TV}}
\,\mu_{Y_{1:t-1}}(g_{\mathrm{\mathrm{y_t}}})\, \mathrm d \mathrm{\mathrm{y_t}}.
\end{align*}
By Proposition~\ref{prop:ExTV}, for every $\mathrm{y_t} \in \mY$,
\[
\|U_{\mathrm{y_t}}(\mu_{Y_{1:t-1}})-U_{\mathrm{y_t}}(\nu_{Y_{1:t-1}})\|_{\mathrm{TV}}
\le
\frac{1}{\mu_{Y_{1:t-1}}(g_{\mathrm{y_t}})}
\int_{\mX} g_{\mathrm{y_t}}(x)\, \abs{\mu_{Y_{1:t-1}}-\nu_{Y_{1:t-1}}}(\mathrm d x).
\]
Substituting this bound into the previous expression yields
\begin{align*}
&\mathbb{E}^{L_t(\mu_{Y_{1:t-1}})}\!\left[
\|U_{\mathrm{y_t}}(\mu_{Y_{1:t-1}})-U_{\mathrm{y_t}}(\nu_{Y_{1:t-1}})\|_{\mathrm{TV}}
\right] \\
&\le
\int_{\mathcal{Y}}
\frac{1}{\mu_{Y_{1:t-1}}(g_{\mathrm{\mathrm{y_t}}})}
\int_{\mathcal{X}}
g_{\mathrm{\mathrm{y_t}}}(x)\, \abs{\mu_{Y_{1:t-1}}-\nu_{Y_{1:t-1}}}(\mathrm{d}x)\;
\mu_{Y_{1:t-1}}(g_{\mathrm{\mathrm{y_t}}})\, \mathrm{d}\mathrm{\mathrm{y_t}}
\\
&=
\int_{\mathcal{X}}
\left(
\int_{\mathcal{Y}} g_{\mathrm{\mathrm{y_t}}}(x)\, \mathrm{d}\mathrm{\mathrm{y_t}}
\right)
\abs{\mu_{Y_{1:t-1}}-\nu_{Y_{1:t-1}}}(\mathrm{d}x).
\end{align*}
The final equality relies on two properties, first, the normalization factor $\mu_{Y_{1:t-1}}(g_{\mathbf{y}_t})$ cancels out identically; second, Fubini's theorem allows the order of integration to be swapped because the total variation measure $|\mu_{Y_{1:t-1}}-\nu_{Y_{1:t-1}}|$ is independent of $\mathbf{y}_t$.
Finally, by \eqref{eq:g_normalized},
\[
\mathbb{E}^{L_t(\mu_{Y_{1:t-1}})}\!\left[
\|U_{\mathrm{y_t}}(\mu_{Y_{1:t-1}})-U_{\mathrm{y_t}}(\nu_{Y_{1:t-1}})\|_{\mathrm{TV}}
\right]
\le \|\mu_{Y_{1:t-1}}-\nu_{Y_{1:t-1}}\|_{\mathrm{TV}}.
\]
\qed

We recall the following classical definition and result.

\begin{definition}[Uniform integrability]
A class \(\mathcal C\) os r.v.s is called
uniformly integrable (\(\mathrm{UI}\)) if given  \(\epsilon>0\) there exists
\(R\in[0,\infty)\) such that
\[
\mbE[
|X|\,\mathbf 1_{\{|X|>R\}}
]=\int_{|X|>R} |X| \, \sd \mbP
<\epsilon, \quad \forall X \in \mC,
\]
where \(\mathbf 1\) denotes the indicator function.
\end{definition}
\begin{remark}\label{rem:UB_UI}
Note that any almost sure uniformly bounded (\(\mathrm{UB}\)) class of random variables is \(\mathrm{UI}\). Indeed, if there exists $L>0$ such that 
\(
|X|\leq L,
\) a.s. 
 \(\forall X\in \mC\), then
\(
\mathbb E[|X|\mathbf 1_{\{|X|>R\}}]=0,
\)
for every \(R>L\). For brevity, we write
\(
\mathrm{UB}
\Rightarrow
\mathrm{UI}.
\) A weaker sufficient condition is uniform boundedness in \(L^p\), namely if there
exist \(p>1\) and \(L>0\) such that
\(
\sup_{X\in\mathcal C}\mathbb E[|X|^p]\leq L,
\)
then \(\mathcal C\) is uniformly integrable; see, for instance, Example 1.3.35
\cite{aggoun2004measure}.  This implication fails for \(p=1\); see the first example in Chapter~13 of \cite{Williams91}.
\end{remark}

\begin{theorem}\label{thm:Williams_L1}
Let \(\{X_n\}_{n\geq1}\subset L^1(\mathbb P)\) and let \(X\in L^1(\mathbb P)\). Then
\(
X_n\xrightarrow{L^1}X
\)
if and only if the following conditions hold:
\begin{enumerate}
\item
\(
X_n\xrightarrow{\mathbb P}X;
\)
\item
\(
\{X_n\}_{n\geq1}
\)
is \(\mathrm{UI}\).
\end{enumerate}
\end{theorem}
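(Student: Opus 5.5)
We need to prove Theorem~\ref{thm:Williams_L1}, the classical Vitali-type characterization of $L^1$ convergence. The plan is to prove the two implications separately, using the elementary $\epsilon$--$\delta$ characterization of uniform integrability. The first step is a lemma recording that $\mathrm{UI}$ of a class $\mathcal C$ is equivalent to: (a) $\sup_{X\in\mathcal C}\mathbb E|X|<\infty$, and (b) for every $\epsilon>0$ there is $\delta>0$ such that $\mathbb P(A)<\delta$ implies $\mathbb E[|X|\mathbf 1_A]<\epsilon$ for all $X\in\mathcal C$. We also record that a single integrable $X$ satisfies (b), by dominated convergence applied to $|X|\mathbf 1_{\{|X|>R\}}$. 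To see that UI gives (b), split on $\{|X|>R\}$, which yields $\mathbb E[|X|\mathbf 1_A]\le \mathbb E[|X|\mathbf 1_{\{|X|>R\}}]+R\,\mathbb P(A)$; taking $A=\Omega$ gives (a).

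\emph{($\Rightarrow$)} Assume $X_n\to X$ in $L^1$. Markov's inequality gives $\mathbb P(|X_n-X|>\eta)\le \mathbb E|X_n-X|/\eta\to0$, so $X_n\to X$ in probability. For UI, fix $\epsilon>0$ and choose $N$ with $\mathbb E|X_n-X|<\epsilon/2$ for $n>N$. The finite family $\{X,X_1,\dots,X_N\}$ is UI and integrable, so it satisfies (b) with some $\delta$ for $\epsilon/2$. Then for $n>N$ and $\mathbb P(A)<\delta$ we get $\mathbb E[|X_n|\mathbf 1_A]\le \mathbb E|X_n-X|+\mathbb E[|X|\mathbf 1_A]<\epsilon$, and (a) holds trivially. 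By the lemma the sequence is UI. To use the lemma in this direction we need its converse: choosing $R>\sup\mathbb E|X|/\delta$ makes $\mathbb P(|X|>R)<\delta$ by Markov, so (b) applied with $A=\{|X|>R\}$ gives the UI bound.

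\emph{($\Leftarrow$)} Assume $X_n\to X$ in probability and that $\{X_n\}$ is UI. Consider the truncation $\varphi_R(x)=\max(-R,\min(R,x))$ and split
\[
\mathbb E|X_n-X|\le \mathbb E|\varphi_R(X_n)-\varphi_R(X)|+\mathbb E|X_n-\varphi_R(X_n)|+\mathbb E|X-\varphi_R(X)|.
\]
The last two terms are bounded by $\mathbb E[|X_n|\mathbf 1_{\{|X_n|>R\}}]$ and $\mathbb E[|X|\mathbf 1_{\{|X|>R\}}]$, and these are made smaller than $\epsilon/3$ uniformly in $n$ by choosing $R$ large, using UI and the integrability of $X$. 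For the first term, $\varphi_R$ is $1$-Lipschitz, so $\varphi_R(X_n)\to\varphi_R(X)$ in probability, and the term is bounded by $2R$. We bound it by $\eta+2R\,\mathbb P(|X_n-X|>\eta)$ with $\eta=\epsilon/6$, and then take $n$ large, so the first term is below $\epsilon/3$ as well. This is a bounded convergence argument in probability.

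The main (mild) obstacle is the bookkeeping in the equivalence lemma and in the $(\Rightarrow)$ direction, namely combining the finitely many initial terms with the tail. Everything else is routine. Alternatively, the whole theorem could simply be cited as Theorem~13.7 in \cite{Williams91}, which is presumably what the authors intend.
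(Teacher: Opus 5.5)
Your proof is correct, and it is essentially the standard argument of Theorem~13.7 in \cite{Williams91}; the paper does not prove this result itself but cites that reference. In both, the ``if'' direction uses the truncation $\varphi_R$, the $1$-Lipschitz property and bounded convergence in probability, and the ``only if'' direction combines Markov's inequality with the $\epsilon$--$\delta$ (uniform absolute continuity) characterization of UI. Your preliminary equivalence lemma simply makes explicit the auxiliary facts that the cited proof takes from earlier results in \cite{Williams91}.
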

\noindent The preceding result is Vitali's convergence theorem, a proof of which can be found in \cite[Theorem~13.7]{Williams91}.

\subsection{Proof of Lemma~\ref{lem:Bounded_Conv_Equiv}}\label{proof:Bounded_Conv_Equiv}
We first establish that, for the sequence \(\{X_t\}_{t\geq1}\),
convergence in probability and convergence in \(L^1\) coincide.

\noindent Since
\(
0\leq X_t\leq1,
\) a.s. for every 
\(t\geq1\), the family \(\{X_t\}_{t\geq1}\) is \(\mathrm{UI}\) by Remark~\ref{rem:UB_UI}. Hence, for any \(X_\infty\in
L^1(\mathbb P)\), Vitali's convergence theorem yields
\begin{equation}\label{eq:Bounded_L1_P}
X_t\xrightarrow{L^1}X_\infty
\quad\Longleftrightarrow\quad
X_t\xrightarrow{\mathbb P}X_\infty.
\end{equation}

\noindent \(i)\Rightarrow ii)\)
Assume first that
\(
X_t\xrightarrow{\mathbb P}0.
\)
By \eqref{eq:Bounded_L1_P},
\(
X_t\xrightarrow{L^{p_0}}0,
\)
with \(p_0=1\).

\noindent \(ii)\Rightarrow iii)\)
Assume that, for some \(p_0\geq1\),
\(
X_t\xrightarrow{L^{p_0}}0.
\)
Let \(p\geq1\). If \(p>p_0\), since \(0\leq X_t\leq1\), then 
\(
X_t^p\leq X_t^{p_0},
\)
and hence
\(
\mathbb E[X_t^p]
\leq
\mathbb E[X_t^{p_0}]
\longrightarrow0.
\)
If \(p<p_0\), then the map
\(x\mapsto x^{p_0/p}\) is convex. Hence, for any probability measure
\(\mu\), Jensen's inequality implies that, for every integrable function
\(f\),
\[
\left(
\int |f|^p\,\mathrm d\mu
\right)^{p_0/p}
\leq
\int |f|^{p_0}\,\mathrm d\mu.
\]
Consequently,
\(
\|X_t\|_{L^p}
\leq
\|X_t\|_{L^{p_0}}
\longrightarrow0.
\)
Therefore,
\(
X_t\xrightarrow{L^p}0,
\;\forall p\geq1.
\)

\noindent \(iii)\Rightarrow iv)\)
Assume that
\(
X_t\xrightarrow{L^p}0, \;\forall p\ge1.
\) In particular \(
X_t\xrightarrow{L^1}0,
\) 
 it follows from \eqref{eq:Bounded_L1_P} that
\(
X_t\xrightarrow{\mathbb P}0.
\)
Since \(0\leq X_t^p\leq X_t,\) a.s. for every $t\ge1$. Then  \beq\label{ineq:E_t^pleqE_t} \mbP(X_t^p\geq \delta) \leq \mbP(X_t\geq \delta), \quad \forall p\ge1.\eeq Hence 
\( X_t^p\xrightarrow{\mathbb P}0, \; \forall p\ge1.
    \)

\noindent Since \(iv)\Rightarrow i)\) trivially, the four statemens are equivalent.

\noindent Now, under any of these equivalent conditions, in particular,
\(
X_t\xrightarrow{L^2}0.
\)
We have
\[
\operatorname{Var}(X_t)=\mbE[X_t^2]-\mbE[X_t]^2
\leq
\mathbb E[X_t^2]
=\|X_t\|_{L^2(\mbP)}^2
\longrightarrow0.
\]
Finally, if
\(
\sum_{t=0}^{\infty}\mathbb P(X_t\geq \delta)
<
\infty, \; \forall \delta>0,
\)
 the Borel--Cantelli lemma yields
\(
\mathbb P(X_t< \delta \ \mathrm{ev})=1.
\)
In other words, with probability one, there exists \(N\in\mathbb N\) such that
\(
X_t<\delta,
\;
\forall\, t\geq N.
\)
Take \(\delta_q\in\mathbb Q^+\) and define
\[
A_{\delta_q}
:=
\left\{
\exists\, N\in\mathbb N
\text{ such that }
X_t\leq \delta_q,
\quad
\forall\, t\geq N
\right\}.
\]
From the previous argument, we have proved that
\(
\mathbb P(A_{\delta_q})=1,
\;
\forall\,\delta_q\in\mathbb Q^+.
\)
Now define the set
\(
A
:=
\bigcap_{\delta_q\in\mathbb Q^+} A_{\delta_q}.
\)
Since \(\mathbb Q^+\) is countable and
\(
\mathbb P(A_{\delta_q})=1,
\)
 \( \forall \delta_q\in\mathbb Q^+\), it follows from the continuity of probability measures from above (Lemma~1.10 in \cite{Williams91}) that 
\(
\mathbb P(A)=1.
\)
We now show that \(X_t(\omega)\to0\) for every \(\omega\in A\).

\noindent Let
\(\omega\in A\) and let \(\varepsilon>0\). Since \(\mathbb Q^+\) is dense in
\(\mathbb R^+\), there exists \(\delta_q\in\mathbb Q^+\) such that
\(
0<\delta_q<\varepsilon.
\)
Since \(\omega\in A\subseteq A_{\delta_q}\), there exists
\(N=N(\omega,\delta_q)\in\mathbb N\) such that
\(
X_t(\omega)\leq \delta_q< \varepsilon,
\;
\forall\,t\geq N.
\)
Since \(\varepsilon>0\) was arbitrary, we conclude that
\(
X_t(\omega)\to0.
\)
Hence \(X_t\to0\) on a set \(A\) with \(\mathbb P(A)=1\), and therefore
\(
X_t\xrightarrow{\mathrm{a.s.}}0.
\) By \eqref{ineq:E_t^pleqE_t} and the comparison test (Theorem 8.20 in~\cite{apostol1958mathematical})
\[\sum_{t=0}^{\infty}\mathbb P(X_t^p\geq\delta)
\leq \sum_{t=0}^{\infty}\mathbb P(X_t\geq\delta)<\infty.
\] 
Hence \(X_t^p \xrightarrow{\mathrm{a.s.}}0\), by the same arguments used for $X_t$.
\qed

\section{Technical result for section \ref{sec4} }

\begin{theorem}[Robbins--Siegmund \cite{robbins1971convergence}]\label{thm:robbins_siegmund}
Let \(\{X_t\}_{t\geq 0}\), \(\{A_t\}_{t\geq 0}\), \(\{B_t\}_{t\geq 0}\), and
\(\{C_t\}_{t\geq 0}\) be sequences of nonnegative integrable r.v.s
on a probability space, adapted to a filtration \(\{\mathcal F_t\}_{t\geq 0}\).
Assume that
\(
\sum_{i=0}^{\infty} A_i < \infty,
\;
\sum_{i=0}^{\infty} C_i < \infty
\; \text{a.s.}
\)
and that, for every \(t\in\mathbb N\),
\[
\mathbb E\!\left[X_{t+1}\mid \mathcal F_t\right]
\leq
(1+A_t)X_t - B_t + C_t
\quad \text{a.s.}
\]
Then \(\{X_t\}_{t\geq 0}\) converges almost surely and
\(
\sum_{i=0}^{\infty} B_i < \infty
\; \text{a.s.}
\)
\end{theorem}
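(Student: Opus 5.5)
The plan is the classical normalization-and-localization argument for almost-supermartingales. The idea is to remove the multiplicative factor $(1+A_t)$ by a predictable rescaling, absorb $B_t$ and $C_t$ into a compensated process that is a genuine supermartingale, and localize so that this process is bounded below. The martingale convergence theorem then does the work.

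\emph{Step 1 (normalization).} Define $\beta_0:=1$ and $\beta_t:=\prod_{i=0}^{t-1}(1+A_i)^{-1}$ for $t\ge1$. Each $\beta_t$ is $\mathcal F_{t-1}$-measurable (hence $\mathcal F_t$-measurable), satisfies $0<\beta_t\le 1$, and obeys $\beta_{t+1}(1+A_t)=\beta_t$. Since $\sum_i A_i<\infty$ a.s. and $\log(1+x)\le x$, we have $\beta_t\downarrow\beta_\infty>0$ a.s. Multiply the hypothesis by the $\mathcal F_t$-measurable factor $\beta_{t+1}$ and pull it inside the conditional expectation; this uses only that $\beta_{t+1}\le 1$ and that $X_{t+1}$ is integrable. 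We obtain
\[
\mathbb E[\beta_{t+1}X_{t+1}\mid\mathcal F_t]\le \beta_t X_t-\beta_{t+1}B_t+\beta_{t+1}C_t .
\]
It follows that
\[
U_t:=\beta_tX_t+\sum_{i=0}^{t-1}\beta_{i+1}B_i-\sum_{i=0}^{t-1}\beta_{i+1}C_i
\]
is an integrable $\{\mathcal F_t\}$-supermartingale.

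\emph{Step 2 (localization).} This is the main obstacle, because $U_t$ is not bounded below and so the supermartingale convergence theorem does not apply directly. For $a>0$ let
\[
\tau_a:=\inf\Bigl\{t\ge0:\ \sum_{i=0}^{t}\beta_{i+1}C_i>a\Bigr\}.
\]
This is a stopping time, since the sum up to index $t$ is $\mathcal F_t$-measurable. For $t\le\tau_a$ we have $\sum_{i<t}\beta_{i+1}C_i\le a$. The stopped process $U_{t\wedge\tau_a}$ is again a supermartingale, and it satisfies $U_{t\wedge\tau_a}\ge -a$ because $X$ and $B$ are nonnegative. Hence $U_{t\wedge\tau_a}+a$ is a nonnegative supermartingale, and it converges a.s. to a finite limit (as in Remark~\ref{rem:E_infty}, via Corollary~11.7 in \cite{Williams91}). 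On the event $\{\tau_a=\infty\}$ this means $U_t$ itself converges. Because $\beta_{i+1}\le1$ and $\sum_i C_i<\infty$ a.s., we have $\{\sum_iC_i<\infty\}\subseteq\bigcup_{a\in\mathbb N}\{\tau_a=\infty\}$ up to a null set. Therefore $U_t$ converges to a finite limit almost surely.

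\emph{Step 3 (unpacking).} The series $\sum_{i}\beta_{i+1}C_i$ converges a.s. Adding it back, we get that $\beta_tX_t+\sum_{i<t}\beta_{i+1}B_i$ converges a.s. to a finite limit. Both summands are nonnegative and the second is nondecreasing in $t$, so the second is bounded by the finite limit of the sum. Hence $\sum_i\beta_{i+1}B_i<\infty$, and since $\beta_{i+1}\ge\beta_\infty>0$, also $\sum_iB_i<\infty$ a.s. It follows that $\beta_tX_t$ converges a.s., and dividing by $\beta_t\to\beta_\infty>0$ shows that $X_t$ converges almost surely. This completes the argument.
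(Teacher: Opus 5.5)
Your proof is correct: it is the classical Robbins--Siegmund argument (predictable rescaling by $\beta_t=\prod_{i<t}(1+A_i)^{-1}$, compensation into the supermartingale $U_t$, and localization through the stopping times $\tau_a$), which is the proof in \cite{robbins1971convergence}; the paper does not prove the theorem itself but simply cites that reference. The key details all hold, in particular the lower bound $U_{t\wedge\tau_a}\ge -a$ and the fact that $\beta_\infty\ge e^{-\sum_i A_i}>0$, which you need to pass from $\sum_i\beta_{i+1}B_i<\infty$ to $\sum_i B_i<\infty$ and to recover the convergence of $X_t$ from that of $\beta_tX_t$.
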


\section{Technical proofs of Section \ref{sec5}}

\subsection{Proof of Theorem \ref{thm:Dobrushin_Doeblin}}\label{ap:proof_thm:Dobrushin_Doeblin}
For any $x\in\mathcal X$, the triangle inequality in the \(\Sigma^{-1}\)-norm and the boundedness of \(h\) imply
\(
\|x - h(x')\|_{\Sigma^{-1}}
\le \|x\|_{\Sigma^{-1}} + M_\Sigma.
\)
Since \(\psi\) is non-decreasing and \(\phi\) is non-increasing, it follows that
\[
p(x \mid x')
\ge \phi\!\big(\psi(\|x\|_{\Sigma^{-1}} + M_\Sigma)\big).
\]
Define
\(
q(x):=\phi\!\big(\psi(\|x\|_{\Sigma^{-1}} + M_\Sigma)\big).
\)
Then
\(
0\le q(x)\le p(x\mid x'),
\; \forall x',x\in\mathcal X.
\)
Now define the finite measure
\[
\nu(A):=\int_A q(x)\,\mathrm dx,
\qquad A\in\mathcal B(\mathcal X),
\]
and let \(\kappa:=\nu(\mathcal X)\). Since \(q(\cdot)\le p(\cdot\mid x')\) and each \(p(\cdot\mid x')\) integrates to \(1\), we have \(0<\kappa\le 1\). Define the probability measure
\(
\eta(A):=\dfrac{\nu(A)}{\kappa},
\; A\in\mathcal B(\mathcal X).
\)
Then
\[
K(x',A)
= \int_A p(x\mid x')\,\mathrm dx
\ge \int_A q(x)\,\mathrm dx
= \nu(A)
= \kappa\,\eta(A).
\]
By Theorem~\ref{thm:Doeblin_epsilon} in the Supplement, this uniform minorization yields
\(
\|K\| \le 1-\kappa.
\)
\qed

\subsection{Proof of Proposition \ref{prop:epsilon}}\label{ap:Proof_Proposition}

Note that, by \eqref{eq:kappa} in Theorem~\ref{thm:Dobrushin_Doeblin}, an explicit upper bound for the Dobrushin contraction coefficient is given by
\[
\kappa = \int_{\mbR^{dx}} \phi\big(\psi(\|x\|_{\Sigma^{-1}} + M_\Sigma)\big)\,\mathrm dx.
\]
First, we show that \(\kappa\) is non-decreasing w.r.t.
\(\lambda_{\min}(\Sigma)\). The same monotonicity argument also shows
that \(\kappa\) is non-increasing w.r.t. \(M\).  Let
\(
q(r):=\phi(\psi(r)).
\)
Since \(\phi\) is non-increasing and \(\psi\) is non-decreasing, the
composition \(q\) is non-increasing. For \(\lambda>0\), define
\[
I(\lambda)
:=
\int_0^\infty
r^{d_x-1}
q\!\left(
r+\frac{M}{\sqrt{\lambda}}
\right)
\,\mathrm dr.
\]
If \(0<\lambda_1\leq\lambda_2\), then
\(
\frac{M}{\sqrt{\lambda_1}}
\geq
\frac{M}{\sqrt{\lambda_2}}.
\)
Hence, for every \(r\geq0\),
\(
r+\frac{M}{\sqrt{\lambda_1}}
\geq
r+\frac{M}{\sqrt{\lambda_2}}.
\)
Since \(q\) is non-increasing, it follows that
\(
q\!\left(
r+\frac{M}{\sqrt{\lambda_1}}
\right)
\leq
q\!\left(
r+\frac{M}{\sqrt{\lambda_2}}
\right).
\)
Multiplying by \(r^{d_x-1}\geq0\) and integrating over \([0,\infty)\)
gives
\[
I(\lambda_1)\leq I(\lambda_2).
\]
Therefore \(I(\lambda)\) is non-decreasing in \(\lambda\).

\noindent Now, to evaluate this quantity, we perform the change of variables \(r=\|x\|_R=\sqrt{x^\top R\, x}\). Under this transformation, Lebesgue measure decomposes as
\(
\mathrm dx = S_R^{d_x} r^{d_x-1}\,\mathrm dr,
\)
where \(S_R^{d_x}\) denotes the weighted surface area of the \(d_x\)-dimensional unit sphere associated with the geometry induced by \(R\), namely,
\(
S_R^{d_x} = \det(R)^{-1/2}\,\dfrac{2\pi^{d_x/2}}{\Gamma(d_x/2)}.
\)
The computation reduces to the one-dimensional integral
\[
\kappa = S_R^{d_x}\int_0^\infty \phi\big(\psi(r+M_R)\big)\,r^{d_x-1}\,\mathrm dr.
\]

\noindent\textit{Case 1: Exponential radial profiles.}
Let \(\phi(s)=\mathrm C\,e^{-s}\) and \(\psi(r)=ar^\beta\), where \(\mathrm C,a,\beta>0\). Then the function
\(
\phi\big(\psi(r+M_R)\big)
= \mathrm C\exp\!\big(-a(r+M_R)^\beta\big),
\)
and therefore
\[
\kappa=S_R^{d_x}\,\mathrm C
\int_0^\infty e^{-a(r+M_R)^\beta}\,r^{d_x-1}\,\mathrm dr.
\]
Introducing the shift \(u=r+M_R\), we obtain
\(\displaystyle
S_R^{d_x}\,\mathrm C
\int_{M_R}^\infty e^{-au^\beta}(u-M_R)^{d_x-1}\,\mathrm du.
\)
Since \(d_x\in\mathbb N\), expanding \((u-M_R)^{d_x-1}\) via the binomial theorem yields
\[
S_R^{d_x}\,\mathrm C
\sum_{k=0}^{d_x-1}
\binom{d_x-1}{k}
(-M_R)^{d_x-1-k}
\int_{M_R}^\infty u^k e^{-au^\beta}\,\mathrm du.
\]
The remaining integral admits the closed form
\[
\int_{M_R}^{\infty}
u^k e^{-a u^\beta}\,\mathrm du
=
\frac{1}{\beta}\,
a^{-(k+1)/\beta}\,
\Gamma\!\left(
\frac{k+1}{\beta},\, aM_R^\beta
\right),
\]
where \(\Gamma(\cdot,\cdot)\) denotes the upper incomplete gamma function. Substituting this expression yields \eqref{eq:epsilon_Exp}.

\noindent\textit{Case 2: Polynomial-tail profiles.}
Let \(\phi(s)=\mathrm C\,s^{-\alpha}\) and \(\psi(r)=1+ar^2\), where \(\alpha,a>0\). Then the function
\(
\phi\big(\psi(r+M_R)\big)
=
\mathrm C\,(1 + a(r+M_R)^2)^{-\alpha}.
\)
After the radial change of variables, the integral becomes
\[
\kappa=S_R^{d_x}\,\mathrm C
\int_0^\infty
(1 + a(r+M_R)^2)^{-\alpha} r^{d_x-1}\,\mathrm dr.
\]
Introducing the shift \(u=r+M_R\) yields
\(\displaystyle
S_R^{d_x}\,\mathrm C
\int_{M_R}^{\infty}
(1 + a u^2)^{-\alpha} (u-M_R)^{d_x-1}\,\mathrm du.
\)
By the binomial theorem, 
\(\displaystyle
(u-M_R)^{d_x-1}
=
\sum_{k=0}^{d_x-1}
\binom{d_x-1}{k}
(-M_R)^{d_x-1-k} u^k,
\)
so that
\[
S_R^{d_x}\,\mathrm C
\sum_{k=0}^{d_x-1}
\binom{d_x-1}{k}
(-M_R)^{d_x-1-k}
\int_{M_R}^{\infty} u^k (1 + a u^2)^{-\alpha}\,\mathrm du.
\]
The corresponding integral over \((0,\infty)\) can be expressed in terms of the Beta function,
\[
\int_0^\infty u^{d_x-1} (1 + a u^2)^{-\alpha}\,\mathrm du
=
\frac{1}{2} a^{-d_x/2}
B\!\Big(\frac{d_x}{2}, \alpha - \frac{d_x}{2}\Big),
\]
which is finite whenever \(\alpha>d_x/2\). Applying this identity termwise yields \eqref{eq:epsilon_Pol}.
\qed

\section{Numerical examples}

We illustrate the theoretical results using a nonlinear Gaussian state-space neural network (SSNN) \cite{rivals1996black,zamarreno1998state} for which the minorization constant \(\kappa\) and the Dobrushin coefficient \(\|K\|\) can be computed explicitly. We study the filtering error
\(
\|\pi_{Y_{1:t}}-\widehat\pi_{Y_{1:t}}\|_{\mathrm{TV}}
\)
under misspecified initial conditions and transition dynamics, and corroborate the convergence behaviors predicted by Theorems~\ref{thm:lim_bound} and~\ref{thm:AS_miss_subsequence}. We consider two regimes.

\begin{enumerate}
\item \emph{Persistent misspecification:}
\(
\|\Delta K_t(\nu)\|_{\mathrm{TV}}
\leq
\varepsilon,
\)
uniformly in \(t\) and \(\nu\in\mP(\mX)\). The filtering error converges to the asymptotic error floor predicted by Theorem~\ref{thm:lim_bound}.

\item \emph{Decaying misspecification:}
\(
\|\Delta K_t(\nu)\|_{\mathrm{TV}}
\leq
\varepsilon_{t-1},
\)
with
\(
\sum_{t\geq0}\varepsilon_t<\infty.
\)
The optimal and misspecified filters coalesce almost surely, in accordance with Theorem~\ref{thm:AS_miss_subsequence}.

\end{enumerate}

%

\subsection{Nonlinear Gaussian SSNN model}
\label{ex:SSNN_gaussian}

We consider a nonlinear Gaussian SSNN. The hidden state process \(\{X_t\}_{t\geq1}\), taking values in \(\mathbb R^{d_x}\), evolves according to
\begin{equation*}
X_t
=
W\tanh(X_{t-1})
+
U_t,
\qquad
U_t\sim\mathcal N(0,\sigma_x I_{d_x}),
\end{equation*}
where
\(
W\in\mathbb R^{d_x\times d_x}
\)
is a fixed weight matrix and \(\tanh\) is applied componentwise. The observations \(\{\mathrm{y_t}\}_{t\geq1}\), taking values in \(\mathbb R^{d_y}\), are generated according to
\begin{equation*}
\mathrm{y_t}
=
HX_t
+
V_t,
\qquad
V_t\sim\mathcal N(0,\sigma_y I_{d_y}),
\end{equation*}
where
\(
H\in\mathbb R^{d_y\times d_x}
\)
is a known observation matrix. The state and observation noises are assumed independent.
The associated transition kernel is
\begin{equation*}
K_t(x,\mathrm dx')
=
\mathcal N
\!\left(
\mathrm dx';
W\tanh(x),
\sigma_x I_{d_x}
\right).
\end{equation*}
Suppose that the practitioner employs a misspecified transition kernel
\(\widehat K_t\) with a time-dependent bias
\(\beta_t\in\mathbb R^{d_x}\)  given by
\begin{equation*}
\widehat K_t(x,\mathrm dx')
=
\mathcal N
\!\left(
\mathrm dx';
W\bigl(\tanh(x)+\beta_t\bigr),
\sigma_x I_{d_x}
\right),
\end{equation*}
where
\(
\sup_{t\geq1}
\|\beta_t\|
\leq
\beta,
\)
for some \(\beta>0\).  

\subsubsection{Analytical derivation of \(\varepsilon_t\), \(\varepsilon\), \(\|K_t\|\), and \(C_K\)}
 Since \(K_t(x,\cdot)\) and \(\widehat K_t(x,\cdot)\) are Gaussian measures with common covariance matrix, Pinsker's inequality yields
\[
\|K_t(x,\cdot)-\widehat K_t(x,\cdot)\|_{\mathrm{TV}}
\leq
\frac{\|W\beta_t\|}{2\sqrt{\sigma_x}},
\qquad
\forall x\in\mathbb R^{d_x},
\quad
t\geq1.
\]
Using \(\|\beta_t\|\leq\beta\), it follows that
\begin{equation}
\label{ineq:Pinsker_SSNN}
\varepsilon_t
:=
\sup_{\nu\in\mP(\mX)}
\|\Delta K_t(\nu)\|_{\mathrm{TV}}
\leq
\frac{\|W\|\,\|\beta_t\|}{2\sqrt{\sigma_x}},
\quad \text{and} \quad
\varepsilon
:=
\sup_{t\geq1}\varepsilon_t
\leq
\frac{\|W\|\,\beta}{2\sqrt{\sigma_x}}.
\end{equation}
Since
\(
\|h(x)\|
\leq
\|W\|\,\sqrt{d_x}
\)
and \(\Sigma_x=\sigma_x I_{d_x}\), we have
\(
M_{\Sigma_x^{-1}}
\leq
\frac{\|W\|\sqrt{d_x}}{\sqrt{\sigma_x}}.
\)
The assumptions of Theorem~\ref{thm:Dobrushin_Doeblin} are therefore satisfied. Furthermore, Proposition~\ref{prop:epsilon} yields an explicit expression for the corresponding minorization constant $\kappa_t$.

\medskip
\noindent To obtain a concrete numerical benchmark, we specialize the model to
\(
d_x=3,
\;
d_y=2,
\)
and impose the normalization
\(
\|W\|
\leq
\frac{1}{\sqrt{d_x}}.
\)
Under this choice,
\(
M_{\Sigma_x^{-1}}
\leq
\frac{1}{\sqrt{\sigma_x}},
\)
so that $\kappa_t$ is independent on $t$ and  depends only on the process-noise variance \(\sigma_x\). Substituting \(d_x=3\) into Proposition~\ref{prop:epsilon} gives
\[
\kappa(\sigma_x)
=
\frac{1}{\sqrt{\pi}}
\sum_{i=0}^{2}
\binom{2}{i}
\left(
-\frac{1}{\sqrt{\sigma_x}}
\right)^{2-i}
2^{(i-1)/2}
\Gamma\!\left(
\frac{i+1}{2},
\frac{1}{2\sigma_x}
\right).
\]
Moreover, as established after Proposition~\ref{prop:epsilon}, the quantity \(\kappa\) is non-decreasing  w.r.t. the smallest eigenvalue of the state covariance matrix.
\begin{table}[ht] \centering \begin{tabular}{c|c|c} \hline $\sigma_x$ & $\kappa(\sigma_x)$ & $C_K\leq1-\kappa(\sigma_x)$\\ \hline 1 & 0.08969 & 0.91031\\ 2.5 & 0.22965 & 0.77035\\ 3 & 0.25574 & 0.74426\\ 4 & 0.29647 & 0.70353\\ \hline \end{tabular} \caption{Minorization constant and Dobrushin contraction coefficient for the SSNN model as functions of the process-noise variance \(\sigma_x\).} \label{tab:kappa_ssnn} \end{table}

Table~\ref{tab:kappa_ssnn} reports the values of \(\kappa(\sigma_x)\) and the associated Dobrushin coefficient
\[
\|K_t\|\leq C_K \leq 1-\kappa(\sigma_x), \qquad t\ge1,
\]
for several choices of \(\sigma_x\). Increasing the process-noise variance enlarges the minorization constant and decreases the Dobrushin coefficient.
We select
\(
\sigma_x=2.5.
\)
For this choice,
\(
\kappa(2.5)\approx 0.22965,
\;
C_K\approx 0.77035.
\)
 Theorem~\ref{thm:lim_bound} combining  with \eqref{ineq:Pinsker_SSNN} gives
\[
\limsup_{t\to\infty}
\mathbb E
\!\left[
\left\|
\pi_{Y_{1:t}}
-
\widehat\pi_{Y_{1:t}}
\right\|_{\mathrm{TV}}
\right]
\leq
\frac{\|W\|\,\beta}
{2\sqrt{2.5}\,\kappa_t(2.5)}
\approx
1.376\,\|W\|\,\beta.
\]
Suppose that a practitioner wishes to guarantee an asymptotic filtering error below a prescribed tolerance \(\rho>0\). Recalling that \(\|W\|\leq 1/\sqrt3\), a sufficient condition is
\(
\beta
\leq
\frac{\sqrt{3}}
{1.376}
\,\rho.
\)
, and choosing \(\rho=0.2\), we obtain
\[
\beta
\leq
0.727\sqrt3\,(0.2)
\approx
0.252.
\]
Therefore, any choice \(\beta\leq0.25\) guarantees an asymptotic filtering error below \(\rho=0.2\).

\subsubsection{Numerical simulation}
We investigate the behavior of the filtering error under random observations generated by the SSNN model introduced above.  \noindent To construct the neural weight matrix, we generate a random matrix
\(
\widetilde B^{d_x}\in\mbR^{d_x\times d_x},
\)
whose entries are independent and uniformly distributed on \((0,1)\). We then define
\[
\widetilde W^{d_x}
:=
\widetilde B^{d_x}(\widetilde B^{d_x})^\top
+
\eta I_{d_x},
\qquad
\eta=10^{-2}.
\]
Thus, \(\widetilde W^{d_x}\) is symmetric positive definite. We normalize it by setting
\(
W^{d_x}
=
\frac{1}{\sqrt{d_x}}\,
\frac{\widetilde W^{d_x}}
{\lambda_{\max}(\widetilde W^{d_x})}.
\)
Since \(\widetilde W^{d_x}\) is symmetric positive definite, its operator norm coincides with its largest eigenvalue. Hence, by homogeneity of eigenvalues,
\(
\|W^{d_x}\|
=
\lambda_{\max}(W^{d_x})
=
\frac{1}{\sqrt{d_x}}.
\)
The filtering distributions
\[
\pi_{Y_{1:t}}(\mathrm dx)
=
p_t(x)\,\mathrm dx,
\qquad
\widehat\pi_{Y_{1:t}}(\mathrm dx)
=
\widehat p_t(x)\,\mathrm dx,
\]
admit densities w.r.t. the Lebesgue measure on \(\mathbb R^3\).
Their total variation distance is therefore given by
\begin{equation*}
\|\pi_{Y_{1:t}}
-
\widehat\pi_{Y_{1:t}}\|_{\mathrm{TV}}
=
\frac12
\int_{\mathbb R^3}
|p_t(x)-\widehat p_t(x)|
\,\mathrm dx.
\end{equation*}
Since these densities are not available in closed form, we approximate
them using particle filters (PFs). We propagate two PFs over a
time horizon \(T=50\) with \(N=4\times 10^4\) particles, one associated
with the true dynamics \(K_t\), and the other with the misspecified
dynamics \(\widehat K_t\). Both filters are conditioned on the same
realization of the observation process \(Y_{1:T}=y_{1:T}\).
The corresponding particle approximations are
\[
\pi_{Y_{1:t}}^N
=
\sum_{i=1}^N
w_t^{(i)}
\delta_{X_t^{(i)}},
\qquad
\widehat\pi_{Y_{1:t}}^N
=
\sum_{i=1}^N
\widehat w_t^{(i)}
\delta_{\widehat X_t^{(i)}}.
\]
To approximate their total variation distance, we project both empirical
measures onto a common finite partition
\(
\{B_j\}_{j=1}^{J}
\)
of the three-dimensional state space. In the implementation, this
partition is chosen as a Cartesian histogram grid with
\(J_{\mathrm{side}}=20,\) bins along each coordinate direction, so that
\(
J=J_{\mathrm{side}}^3=8\times 10^3.
\)
The resulting approximation is
\begin{equation}
\|\pi_{Y_{1:t}}^N
-
\widehat\pi_{Y_{1:t}}^N\|_{\mathrm{TV}}
\approx
\frac12
\sum_{j=1}^{J}
\left|
\pi_{Y_{1:t}}^N(B_j)
-
\widehat\pi_{Y_{1:t}}^N(B_j)
\right|.
\end{equation}
Equivalently, if \(p_t^N(B_j)\) and \(\widehat p_t^N(B_j)\) denote the
empirical masses assigned by the two particle systems to the cell
\(B_j\), then
\begin{equation}
\|\pi_{Y_{1:t}}^N
-
\widehat\pi_{Y_{1:t}}^N\|_{\mathrm{TV}}
\approx
\frac12
\sum_{j=1}^{J}
\left|
p_t^N(B_j)
-
\widehat p_t^N(B_j)
\right|.
\end{equation}
Finally, to approximate the expectation w.r.t. the random observation process, we execute a Monte Carlo simulation across $M=100$ independent data trajectories $\{Y_{1:t}^{(k)}\}_{k=1}^M$ simulated from the true generative model. The filters are initialized from
\[
\pi_0
=
\mathcal N(0,I_3),
\qquad
\widehat\pi_0
=
\mathcal N(\mathbf 1,1.2\,I_3),
\]
where \(\mathbf 1=(1,1,1)^\top\). 
The time-evolution of the empirical average, 
\beq
\overline{\widehat E_t^M} := \frac{1}{M}\sum_{k=1}^M \|\pi_{Y_{1:t}}^{N,(k)} - \widehat{\pi}_{Y_{1:t}}^{N,(k)}\|_{\mathrm{TV}},
\eeq
is plotted sequentially against the explicit finite-time uniform bound established by our theory
\beq
\label{eq:ex_finite_time_bound}
a_t := C_K^t a_0 + \varepsilon \frac{1-C_K^t}{1-C_K}, \qquad t = 0, 1, \dots, T.
\eeq
The initial discrepancy is
\(
a_0
=
\|\pi_0-\widehat\pi_0\|_{\mathrm{TV}}
\approx
0.5941.
\)
In the present experiment, the asymptotic theoretical bound is
\[
\limsup_{t\to\infty}
\mathbb E
\left[
\left\|
\pi_{Y_{1:t}}
-
\widehat\pi_{Y_{1:t}}
\right\|_{\mathrm{TV}}
\right] \leq \frac{\varepsilon}{1-C_K}
\approx
0.1988.
\]

\begin{figure}[htb]
\centering
\includegraphics[width=0.6\textwidth]{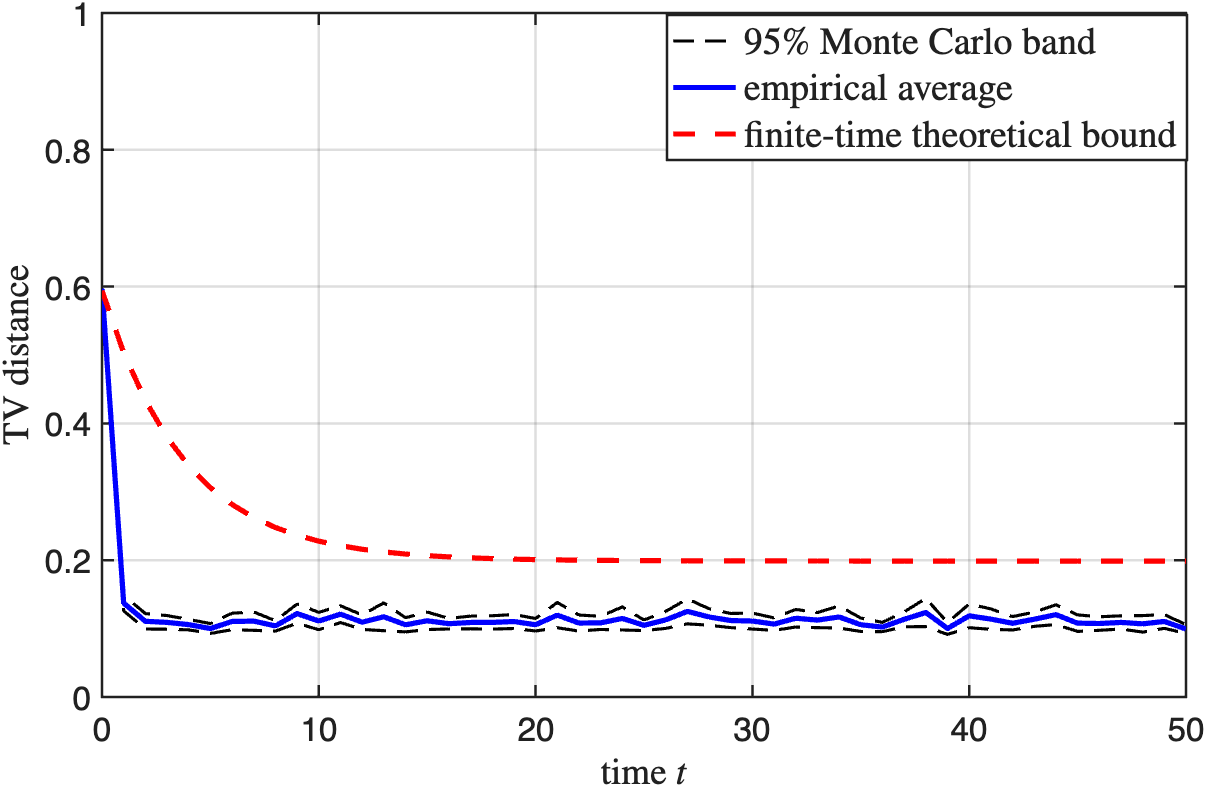}
\caption{
Empirical verification of Theorem~\ref{thm:lim_bound}. The plot shows the empirical average total variation filtering error \(\overline{\widehat E_t^{\,M}}\), computed over \(M=100\) independent observation trajectories using bootstrap PFs with \(N=4\times 10^4\) particles. The  curve is compared with the finite-time theoretical upper bound \(a_t\). The asymptotic error level is \(\varepsilon/(1-C_K)\approx0.1987\).
}
\label{fig1}
\end{figure}

\subsection{Supermartingale decay in a SSNN model}
\label{ex:supermartingale_decay_SSNN}

We consider the same SSNN and observation mechanism as in
Section~\ref{ex:SSNN_gaussian}. To illustrate
Theorem~\ref{thm:AS_miss_subsequence}, we now replace the uniformly bounded
bias $\beta$ by a decaying time-dependent perturbation $\beta_t$. We consider
the misspecified transition kernel
\[
\widehat K_t(x,\mathrm dx')
=
\mathcal N
\!\left(
\mathrm dx';
W\bigl(\tanh(x)+\beta_t\bigr),
\sigma_x I_{d_x}
\right),
\]
where
\(
\beta_t
=
c^t b,
\;
b\in\mathbb R^{d_x},
\;
\|b\|=0.5,
\;
0<c<1.
\)
Thus,
\(
\|\beta_t\|
=0.5
c^t.
\)
By  \eqref{ineq:Pinsker_SSNN}, the induced
misspecification error satisfies
\[
\varepsilon_t
:=
\frac{\|W\|\,\|\beta_t\|}{2\sqrt{\sigma_x}}
=
\frac{\|W\|}{4\sqrt{\sigma_x}}c^t,
\qquad
\quad
t\geq1.
\]
Therefore,
\(
\sum_{t=1}^{\infty}\varepsilon_t
<
\infty.
\)
Hence, the conditions required in
Theorem~\ref{thm:AS_miss_subsequence} is satisfied.

\subsubsection{Almost sure coalescence}
Let
\(
\widehat E_t
:=
\|\pi_{Y_{1:t}}-\widehat\pi_{Y_{1:t}}\|_{\mathrm{TV}}
\)
denote the filtering error process. For comparison, we consider the recursive envelope
\begin{equation}
\label{eq:projected_bound_SSNN}
a_t
=
\min\!\Bigl(
1,\,
\|K_t\| a_{t-1}
+
\varepsilon_{t-1}
\Bigr),
\qquad
a_0=\widehat E_0\approx
0.5941.
\end{equation}
We use the same SSNN model and initialization as in Example~\ref{ex:SSNN_gaussian}.
We choose $c=0.95$, hence
\[
\varepsilon_t
\approx
0.0913\,(0.95)^t.
\]
The corresponding contraction coefficient remains
\(
C_K
=
1-\kappa_t(2.5)
\approx
0.77035.
\)
We simulate the true and misspecified PFs over a horizon
\(T=50\), using \(N=4\times 10^6\) particles and \(M=50\) independent observation trajectories generated from the true SSNN model. For each trajectory, both filters are conditioned on the same observation realization, and the total variation distance is approximated using the same three-dimensional histogram procedure described in Example~\ref{ex:SSNN_gaussian}.

\begin{figure}[htb]
\centering
\includegraphics[width=0.6\textwidth]{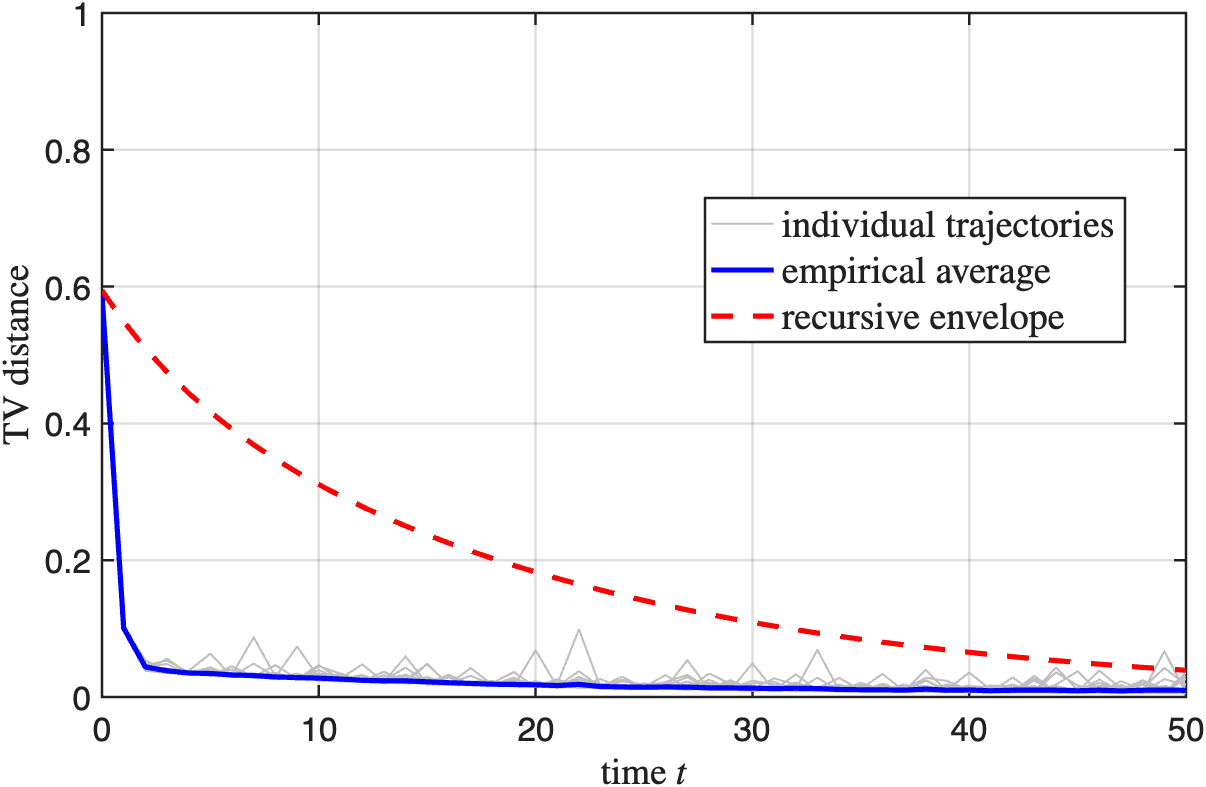}
\caption{
Empirical verification of Theorem~\ref{thm:AS_miss_subsequence}. The figure displays \(M=50\) independent realizations of the filtering error
\(
\widehat E_t
=
\|\pi_{Y_{1:t}}-\widehat\pi_{Y_{1:t}}\|_{\mathrm{TV}}
\),
for the SSNN model, together with the recursive envelope \(a_t\) defined in \eqref{eq:projected_bound_SSNN}. The misspecification sequence is chosen as \(\beta_t=(0.95)^t b\), with \(\|b\|=0.5\), yielding a summable perturbation sequence \(\{\varepsilon_t\}_{t\ge1}\). Although the slowly decaying bias generates a transient accumulation of filtering error, the contractive predictive dynamics eventually dominate, driving the error trajectories toward zero. The observed behavior empirically corroborates the almost sure coalescence predicted by Theorem~\ref{thm:AS_miss_subsequence}.
}
\label{fig2}
\end{figure}


\renewcommand{\appendixname}{}
\bibliographystyle{imsart-number.bst} 
\bibliography{bibliography}       

%
%
\newpage

\clearpage
\phantomsection
\addcontentsline{toc}{section}{Supplementary Material}

\begin{center}
{\large\bfseries Supplementary Material}
\end{center}

%

\section{Preliminaries and background}\label{app:A}

In this section, we introduce the spaces of signed measures required for the operator-theoretic framework developed throughout the paper. Endowed with the total variation norm, these spaces form Banach spaces and provide the natural functional-analytic setting in which to define and study the operators arising in filtering and Bayesian inference.
Most of the results presented in this section are classical. Nevertheless, we include several proofs for completeness and to keep the exposition self-contained. For additional details on signed measures, total variation, and related operator-theoretic constructions, see \cite{kreyszig1991introductory,tuomas2016analysis,Dobrushin56,dobrushin1956central2}.

\subsection{The space of totally finite signed measures}
In order to study the filtering recursion from an operator-theoretic perspective, it is necessary to work in a linear space on which predictive and update operators can act naturally. Since differences of probability measures arise throughout the stability and robustness analysis,
\(
\pi-\widehat\pi,
\)
the natural setting is not the set of probability measures itself, (since this set lacks a linear space structure suitable for operator analysis) but a larger linear space containing such differences. This motivates the introduction of the space of finite signed measures endowed with the total variation norm.

The total variation norm plays a central role throughout the paper. Indeed, it induces the notion of distance used to quantify stability and filtering error, and provides the natural geometry in which the predictive and link operators become bounded linear operators.

\begin{definition}
Let \((\mathcal X,\mB(\mX))\) be a measurable space, and let \(\mathbb M(\mathcal X)\) denote the linear space of all totally finite signed measures on \((\mathcal X,\mB(\mX))\); that is, all signed measures \( m \) such that the associated total variation measure satisfies
\[
| m |(\mathcal X)
=
( m ^+ +  m ^-)(\mathcal X)
<
\infty,
\]
where \( m ^+\) and \( m ^-\) are the finite positive measures arising from the Hahn--Jordan decomposition and
\(
 m 
=
 m ^+
-
 m ^-.
\)
(See Chapter~8 of \cite{bartle2014elements}.)
For \( m \in\mathbb M(\mathcal X)\), the \emph{total variation norm} is defined by
\begin{equation}\label{eq:TVnorm-def}
\| m \|_{\mathrm{TV}}
:=
\frac12
\left(
\sup_{F\in\mB(\mX)} m (F)
-
\inf_{F\in\mB(\mX)} m (F)
\right).
\end{equation}
\end{definition}

The analysis developed throughout the paper is carried out in terms of differences of probability measures arising from the comparison of filtering distributions. Since every such difference is a signed measure with zero total mass, the relevant error dynamics are confined to a proper linear subspace of \(\mbM(\mX)\), namely the space of signed measures whose total mass is zero.

\begin{definition}
Define
\[
\mathcal L(\mathcal X)
:=
\{
\lambda\in\mathbb M(\mathcal X)
:
\lambda(\mathcal X)=0
\},
\]
that is, the set of all totally finite signed measures on \((\mathcal X,\mB(\mX))\) with total mass equal to zero.
\end{definition}

\begin{remark}
The set \(\mathcal L(\mathcal X)\) is a linear subspace of \(\mathbb M(\mathcal X)\).
Indeed, if \(\lambda_1,\lambda_2\in\mathcal L(\mathcal X)\) and
\(\beta_1,\beta_2\in\mathbb R\), then the signed measure
\(
\lambda
:=
\beta_1\lambda_1+\beta_2\lambda_2
\)
satisfies
\[
\lambda(\mathcal X)
=
\beta_1\lambda_1(\mathcal X)
+
\beta_2\lambda_2(\mathcal X)
=
0.
\]
Hence,
\(
\lambda\in\mathcal L(\mathcal X).
\)
\end{remark}
The next result characterizes the total variation norm both on probability measures and on the zero-mass subspace \(\mL(\mX)\). 

\begin{proposition}\label{NPN}
Let
\(
\pi\in\mathcal P(\mathcal X)
\)
be a probability measure, and let
\(
\lambda\in\mathcal L(\mathcal X).
\)
Then
\[
\|\pi\|_{\mathrm{TV}}
=
\frac12,
\qquad
\|\lambda\|_{\mathrm{TV}}
=
\lambda(A)
=
-\lambda(B),
\]
where \((A,B)\) is a Hahn decomposition of \(\mathcal X\) w.r.t. \(\lambda\).
\end{proposition}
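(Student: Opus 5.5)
The plan is to work directly from the Hahn--Jordan decomposition and the definition \eqref{eq:TVnorm-def}. First treat a probability measure \(\pi\). Since \(\pi\ge0\), we have \(\sup_{F}\pi(F)=\pi(\mathcal X)=1\), attained at \(F=\mathcal X\). We also have \(\inf_F\pi(F)=\pi(\emptyset)=0\). Plugging these into \eqref{eq:TVnorm-def} gives \(\|\pi\|_{\mathrm{TV}}=\tfrac12(1-0)=\tfrac12\) immediately.

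For \(\lambda\in\mathcal L(\mathcal X)\), fix a Hahn decomposition \((A,B)\): \(A\) is positive and \(B\) is negative for \(\lambda\), with \(A\cup B=\mathcal X\) and \(A\cap B=\emptyset\). The Jordan parts are then \(\lambda^+(F)=\lambda(F\cap A)\) and \(\lambda^-(F)=-\lambda(F\cap B)\). For any \(F\in\mathcal B(\mathcal X)\),
\[
\lambda(F)=\lambda(F\cap A)+\lambda(F\cap B)\le\lambda(F\cap A)\le\lambda(A),
\]
where the last inequality holds because \(A\setminus F\subseteq A\) is positive, so \(\lambda(A)=\lambda(F\cap A)+\lambda(A\setminus F)\ge\lambda(F\cap A)\). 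Hence \(\sup_F\lambda(F)=\lambda(A)\), attained at \(F=A\). The symmetric argument gives \(\inf_F\lambda(F)=\lambda(B)\).

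Next, the zero-mass condition gives \(0=\lambda(\mathcal X)=\lambda(A)+\lambda(B)\), so \(\lambda(B)=-\lambda(A)\). Substituting into \eqref{eq:TVnorm-def} yields
\[
\|\lambda\|_{\mathrm{TV}}=\tfrac12\bigl(\lambda(A)-\lambda(B)\bigr)=\tfrac12\bigl(2\lambda(A)\bigr)=\lambda(A)=-\lambda(B).
\]

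The main point requiring care is the identification of the supremum and infimum with \(\lambda(A)\) and \(\lambda(B)\), which is exactly the positivity/negativity argument above. A second, minor point is that the value does not depend on which Hahn decomposition is chosen. Hahn decompositions are unique up to \(\lambda\)-null symmetric differences, so \(\lambda(A)\) is the same for any choice. In any case, the formula \(\sup_F\lambda(F)=\lambda(A)\) already shows this independence, since the left-hand side does not involve the decomposition. No other obstacle is expected. As a consistency check, the factor \(\tfrac12\) in \eqref{eq:TVnorm-def} is what makes \(\|\lambda\|_{\mathrm{TV}}=\tfrac12|\lambda|(\mathcal X)\) on \(\mathcal L(\mathcal X)\), matching the conventions used throughout the paper.
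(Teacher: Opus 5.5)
Your proof is correct and follows the paper's argument essentially step by step. You compute \(\sup_F\pi(F)=1\) and \(\inf_F\pi(F)=0\) directly, then use the Hahn decomposition to identify \(\sup_F\lambda(F)=\lambda(A)\) and \(\inf_F\lambda(F)=\lambda(B)\), and finish with the zero-mass identity \(\lambda(A)+\lambda(B)=0\); your explicit justification of \(\lambda(F\cap A)\le\lambda(A)\) via positivity of \(A\setminus F\), and your remark that the value is independent of the chosen decomposition, are small clarifications the paper leaves implicit.
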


\begin{proof}
Since \(\pi\) is a probability measure,
\(
0
\leq
\pi(F)
\leq
1,
\;
\forall\,F\in\mB(\mX).
\)
Moreover,
\(
\pi(\emptyset)=0,
\;
\pi(\mathcal X)=1.
\)
Hence,
\[
\sup_{F\in\mB(\mX)}\pi(F)=1,
\qquad
\inf_{F\in\mB(\mX)}\pi(F)=0.
\]
The results follows by definition of the total variation norm.
Now let
\(
\lambda\in\mathcal L(\mathcal X),
\)
and let \((A,B)\) be a Hahn decomposition of \(\mathcal X\) w.r.t. \(\lambda\). Since
\(
\lambda(\mathcal X)=0,
\)
we have
\[
\lambda(A)+\lambda(B)=0,
\]
and therefore
\beq\label{eq:l(A)=-l(B)}
\lambda(A)=-\lambda(B).
\eeq
For every \(F\in\mB(\mX)\),
\[
\lambda(F)
=
\lambda(F\cap A)
+
\lambda(F\cap B).
\]
Since \(A\) is positive and \(B\) is negative for \(\lambda\),
\beq
\lambda(F\cap A)\leq \lambda(A),
\qquad
\lambda(F\cap B)\leq 0, \notag
\eeq
which implies
\(
\lambda(F)\leq \lambda(A).
\)
Similarly,
\(
\lambda(F)\geq \lambda(B).
\)
Hence,
\beq\label{ineqs.sup_lambda(A)}
\sup_{F\in\mB(\mX)}\lambda(F)=\lambda(A),
\qquad
\inf_{F\in\mB(\mX)}\lambda(F)=\lambda(B).
\eeq
By the definition of the total variation norm, and Eq.~\eqref{eq:l(A)=-l(B)}
\begin{align*}
\|\lambda\|_{\mathrm{TV}}
=
\frac12
\left(
\sup_{F\in\mB(\mX)}\lambda(F)
-
\inf_{F\in\mB(\mX)}\lambda(F)
\right)=
\frac12
\big(
\lambda(A)-\lambda(B)
\big)
=
\lambda(A)
\end{align*}
\end{proof}

%
\subsection{Kernels}\label{Ap:Kernels}
Every transition kernel induces a natural linear transformation on the linear space \((\mbM(\mX),\|\cdot\|_{\mathrm{TV}})\) by transporting measures through the dynamics.  

\begin{definition}
Let \((\mathcal X,\mathcal B(\mathcal X))\) and \((\mathcal Y,\mathcal B(\mathcal Y))\) be measurable spaces. A \emph{measure kernel}, (see \cite{baccelli2024random}) from \(\mathcal X\) to \(\mathcal Y\) is a mapping
\[
K:\mathcal X\times\mathcal B(\mathcal Y)\to[0,\infty]
\]
such that
\begin{enumerate}
    \item for every \(B\in\mathcal B(\mathcal Y)\), the mapping
    \[
    x\mapsto K(x,B)
    \]
    is \(\mathcal B(\mathcal X)\)-measurable;

    \item for every \(x\in\mathcal X\), the mapping
    \[
    B\mapsto K(x,B)
    \]
    is a measure on \((\mathcal Y,\mathcal B(\mathcal Y))\).
\end{enumerate}
If, in addition,
\[
K(x,\mathcal Y)=1,
\qquad
\forall\,x\in\mathcal X,
\]
then \(K\) is called a \emph{probability kernel} (or \emph{Markov kernel}) from \(\mathcal X\) to \(\mathcal Y\).
\end{definition}

\begin{definition}
Let
\(
K
\)
be a measure kernel from  \(\mX\) to itself. The kernel \(K\) induces a linear operator
\[
K:\mbM(\mX)\to\mbM(\mX)
\]
defined by
\(
K(m)
=
\overline m,
\)
where the signed measure \(\overline m\) is given by
\begin{equation}
\overline m(F)
:=
\int_{\mX}
K(x,F)\,
m(\sd x),
\qquad
\forall\,F\in\mB(\mX).
\end{equation}
\end{definition}

\begin{remark}\label{rem:K_lambda}
Probability kernels preserve both total mass and positivity. Consequently, the subspace
\(
\mL(\mX)
\)
and the set of probability measures \(\mathcal P(\mX)\) are invariant under the action of the operator induced by a probability kernel \(K\).

  More precisely, if \(\lambda\in\mL(\mX)\) and \(\pi\in\mathcal P(\mX)\), then
\[
K(\lambda)\in\mL(\mX),
\qquad
K(\pi)\in\mathcal P(\mX).
\]
Indeed, since \(K(x,\mX)=1\) for every \(x\in\mX\),
\[
K(\lambda)(\mX)
=
\int_{\mX}K(x,\mX)\,\lambda(\sd x)
=
\lambda(\mX)
=
0,
\]
and therefore \(K(\lambda)\in\mL(\mX)\). Similarly,
\[
K(\pi)(\mX)
=
\int_{\mX}K(x,\mX)\,\pi(\sd x)
=
\pi(\mX)
=
1.
\]
Moreover, \(K(\pi)\) is nonnegative because \(K(x,\cdot)\) is a probability measure for every \(x\in\mX\). Hence \(K(\pi)\in\mathcal P(\mX)\).
\end{remark}
The natural quantity for measuring the contractive behavior of a transition kernel is the operator norm induced by the total variation norm on the subspace \(\mL(\mX)\). This induced norm quantifies the maximal propagation of differences between probability measures after one prediction step. 
Also, this quantity coincides with the classical Dobrushin contraction coefficient associated with the transition kernel \cite{Dobrushin56,Dobrushin56b}.

\begin{definition}
Let \(K\) be a probability kernel from $\mX$ to itself. The induced operator norm of \(K\) on \(\mL(\mX)\) is defined by
\begin{equation}\label{normK}
\norm{K}
:=
\sup_{\substack{\lambda\in\mathcal L(\mX)\\\lambda\neq0}}
\frac{
\norm{K(\lambda)}_{\mathrm{TV}}
}{
\norm{\lambda}_{\mathrm{TV}}
}.
\end{equation}
\end{definition}

\begin{proposition}\label{MKPK}
Let \(K\) be a measure kernel on \(\mX\) such that
\[
K(x,\mX)
=
C_K
<
\infty,
\qquad
\forall\,x\in\mX.
\]
Then the operator induced by \(K\) is bounded on
\((\mL(\mX),\|\cdot\|_{\mathrm{TV}})\), and its induced operator norm satisfies
\[
\|K\|
\leq
C_K.
\]
In particular, if \(K\) is a probability kernel, then
\(
\|K\|
\leq
1.
\)
\end{proposition}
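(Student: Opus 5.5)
The plan is to reduce everything to the Hahn decomposition characterization of Proposition~\ref{NPN}, exactly as in the proof of Proposition~\ref{prop:Link_Op}. Fix \(\lambda\in\mL(\mX)\) with \(\lambda\neq0\), and let \((A,B)\) be a Hahn decomposition of \(\mX\) with respect to \(\lambda\). By Proposition~\ref{NPN}, \(\|\lambda\|_{\mathrm{TV}}=\lambda(A)=-\lambda(B)\). Writing \(\lambda=\lambda^+-\lambda^-\) with \(\lambda^+=\lambda(\cdot\cap A)\) and \(\lambda^-=-\lambda(\cdot\cap B)\), both parts are nonnegative and have total mass \(\|\lambda\|_{\mathrm{TV}}\).

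The first step is to check that \(K(\lambda)\) is a well-defined finite signed measure. Since \(0\le K(x,F)\le C_K\), the integral \(\int K(x,F)\,\lambda(\sd x)\) is finite for every \(F\). Countable additivity in \(F\) follows from monotone convergence applied separately to \(\lambda^+\) and \(\lambda^-\).

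The second step is to show \(K(\lambda)\in\mL(\mX)\). This holds because \(K(\lambda)(\mX)=\int C_K\,\lambda(\sd x)=C_K\lambda(\mX)=0\), where we use that \(K(x,\mX)\) is constant in \(x\). This is the one place the hypothesis of a constant total mass \(C_K\) is essential. Without it, \(K(\lambda)\) could leave the zero-mass subspace, and we could no longer apply the formula \(\|\cdot\|_{\mathrm{TV}}=\mu(A')\).

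The third step is the bound itself. Take a Hahn decomposition \((A',B')\) for \(K(\lambda)\). By Proposition~\ref{NPN},
\[
\|K(\lambda)\|_{\mathrm{TV}}=K(\lambda)(A')=\int K(x,A')\,\lambda^+(\sd x)-\int K(x,A')\,\lambda^-(\sd x)\le \int K(x,A')\,\lambda^+(\sd x)\le C_K\,\lambda^+(\mX)=C_K\|\lambda\|_{\mathrm{TV}}.
\]
The first inequality drops the nonpositive term \(-\int K(x,A')\,\lambda^-(\sd x)\), and the second uses \(K(x,A')\le K(x,\mX)=C_K\). Linearity of the induced operator is immediate from linearity of the integral in \(m\). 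Dividing by \(\|\lambda\|_{\mathrm{TV}}\) and taking the supremum gives \(\|K\|\le C_K\), and the probability-kernel case is \(C_K=1\).

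I expect no real obstacle here. The only point needing care is the second step, which guarantees that \(K(\lambda)\) has zero mass so that Proposition~\ref{NPN} applies to \(K(\lambda)\). One could also note in passing that the estimate is crude: a sharper bound of \(C_K\) times the Dobrushin coefficient is available, but it is not needed for this statement.
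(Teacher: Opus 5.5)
Your proof is correct and follows the paper's argument. You show $K(\lambda)\in\mL(\mX)$, write $\|K(\lambda)\|_{\mathrm{TV}}=K(\lambda)(A')$ via Proposition~\ref{NPN}, drop the nonpositive contribution from the negative Hahn set of $\lambda$, and bound the remainder by $C_K\lambda(A)=C_K\|\lambda\|_{\mathrm{TV}}$. Your direct check that $K(\lambda)(\mX)=C_K\lambda(\mX)=0$ is slightly more careful than the paper, which instead cites Remark~\ref{rem:K_lambda}, stated only for probability kernels.
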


\begin{proof}
By Remark~\ref{rem:K_lambda}, if
\(
\lambda\in\mL(\mX),
\)
then
\(
K(\lambda)\in\mL(\mX).
\)
Hence, by Proposition~\ref{NPN},
\[
\|K(\lambda)\|_{\mathrm{TV}}
=
K(\lambda)(\widetilde A)=\int_{\mX}
K(x,\widetilde A)\,
\lambda(\sd x),
\]
where \((\widetilde A,\widetilde B)\) is a Hahn decomposition of \(\mX\) w.r.t. \(K(\lambda)\).
Let \((A,B)\) be a Hahn decomposition of \(\mX\) w.r.t. \(\lambda\). 
Splitting the integral over the Hahn decomposition \((A,B)\), we obtain
\[
\int_{\mX}
K(x,\widetilde A)\,
\lambda(\sd x)
=
\int_A
K(x,\widetilde A)\,
\lambda(\sd x)
+
\int_B
K(x,\widetilde A)\,
\lambda(\sd x).
\]
Since \(K(x,\widetilde A)\geq 0\) and \(B\) is negative for \(\lambda\), the second term is non-positive. Therefore,
\[
\|K(\lambda)\|_{\mathrm{TV}}
\leq
\int_A
K(x,\widetilde A)\,
\lambda(\sd x).
\]
Since
\[
K(x,F)\leq C_K,
\qquad
\forall\,x\in\mX,
\quad
\forall\,F\in\mB(\mX),
\]
then
\[
\int_A
K(x,\widetilde A)\,
\lambda(\sd x)
\leq
C_K\,\lambda(A).
\]
Finally, by Proposition~\ref{NPN},
\(
\lambda(A)
=
\|\lambda\|_{\mathrm{TV}}.
\)
Thus,
\[
\|K(\lambda)\|_{\mathrm{TV}}
\leq
C_K\,\|\lambda\|_{\mathrm{TV}},
\]
which proves the claim.
\end{proof}

\subsection{The space of differences of probability measures}
The previous discussion shows that the natural objects propagated by the predictive dynamics are differences of probability measures. This motivates the introduction of the following subspace, which consists precisely of linear combinations of such differences.

\begin{definition}
Define
\[
\mD(\mX)
:=
\{
\beta(\pi_1-\pi_2)
:
\beta\in\mbR,\;
\pi_1,\pi_2\in\mathcal P(\mX)
\},
\]
that is, the linear subspace of \(\mbM(\mX)\) generated by differences of probability measures.
\end{definition}

The next result shows that \(\mD(\mX)\) coincides exactly with the zero-mass subspace \(\mL(\mX)\). 

\begin{proposition}\label{DX=LX}
The subspace \(\mD(\mX)\) coincides with \(\mL(\mX)\).
\end{proposition}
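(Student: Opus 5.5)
We prove the two inclusions $\mD(\mX)\subseteq\mL(\mX)$ and $\mL(\mX)\subseteq\mD(\mX)$ separately.

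The first inclusion is immediate. If $\lambda=\beta(\pi_1-\pi_2)$ with $\pi_1,\pi_2\in\mP(\mX)$, then $\lambda$ is a totally finite signed measure, with $|\lambda|(\mX)\le|\beta|(\pi_1+\pi_2)(\mX)=2|\beta|$. Its total mass is $\lambda(\mX)=\beta(1-1)=0$, so $\lambda\in\mL(\mX)$.

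For the reverse inclusion, take $\lambda\in\mL(\mX)$. The case $\lambda=0$ is trivial: write $\lambda=0\cdot(\pi-\pi)$ for any $\pi\in\mP(\mX)$, using that $\mP(\mX)$ is nonempty, e.g. a Dirac mass. For $\lambda\neq0$, we use the Hahn--Jordan decomposition $\lambda=\lambda^+-\lambda^-$. Here $\lambda^+(F)=\lambda(F\cap A)$ and $\lambda^-(F)=-\lambda(F\cap B)$, where $(A,B)$ is a Hahn decomposition. Since $\lambda(\mX)=0$, we get $\lambda^+(\mX)=\lambda^-(\mX)$. By Proposition~\ref{NPN}, both equal $\lambda(A)=\|\lambda\|_{\mathrm{TV}}=:\beta$. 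Moreover $\beta>0$: if $\beta=0$, then both $\lambda^\pm$ vanish and $\lambda=0$. We then set $\pi_1:=\lambda^+/\beta$ and $\pi_2:=\lambda^-/\beta$. These are nonnegative, $\sigma$-additive and of unit mass, hence belong to $\mP(\mX)$. Thus $\lambda=\beta(\pi_1-\pi_2)\in\mD(\mX)$.

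The only step needing care is checking that the normalizing mass $\beta$ is strictly positive and identical for $\lambda^+$ and $\lambda^-$. Both facts follow from the zero-mass condition together with Proposition~\ref{NPN}.
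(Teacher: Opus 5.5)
Your proof is correct and follows essentially the same approach as the paper. The first inclusion is trivial. For the reverse inclusion, you normalize the Jordan parts $\lambda^\pm$ by their common mass $\lambda(A)=\|\lambda\|_{\mathrm{TV}}$, obtained from Proposition~\ref{NPN}. Your explicit handling of the case $\lambda=0$ and your check that $\beta>0$ fill in details the paper leaves implicit.
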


\begin{proof}
The inclusion
\(
\mD(\mX)\subseteq\mL(\mX)
\)
is immediate. Let
\(
\lambda\in\mL(\mX),
\) 
\(\lambda\neq0,
\)
and let \((A,B)\) be a Hahn decomposition of \(\mX\) w.r.t. \(\lambda\), where \(A\) is positive and \(B\) is negative for \(\lambda\). By the Jordan decomposition,
\[
\lambda
=
\lambda^+-\lambda^-.
\]
Since \(\lambda(\mX)=0\), Proposition~\ref{NPN} implies
\(
\lambda(A)
=
\|\lambda\|_{\mathrm{TV}}.
\)
Define
\[
\pi_1
:=
\frac{1}{\lambda(A)}\,\lambda^+,
\qquad
\pi_2
:=
\frac{1}{\lambda(A)}\,\lambda^-.
\]
Since
\(
\lambda^+(\mX)
=
\lambda^-(\mX)
=
\lambda(A),
\)
both \(\pi_1\) and \(\pi_2\) are probability measures. Moreover,
\[
\lambda
=
\lambda(A)\,
(\pi_1-\pi_2),
\]
which shows that
\(
\lambda\in\mD(\mX).
\)
Therefore,
\(
\mL(\mX)\subseteq\mD(\mX).
\)
\end{proof}

\begin{remark}
Proposition~\ref{DX=LX} allows the induced operator norm of a probability kernel to be expressed entirely in terms of differences of probability measures. More precisely,
\[
\norm{K}
=
\sup_{\substack{\lambda\in\mL(\mX)\\\lambda\neq0}}
\frac{
\norm{K(\lambda)}_{\mathrm{TV}}
}{
\norm{\lambda}_{\mathrm{TV}}
}
=
\sup_{\pi_1,\pi_2\in\mathcal P(\mX)}
\frac{
\norm{K(\pi_1)-K(\pi_2)}_{\mathrm{TV}}
}{
\norm{\pi_1-\pi_2}_{\mathrm{TV}}
}.
\]
In particular, since probability kernels are contractive in total variation, i.e.
\(
\norm{K}\leq1.
\)
Consequently, for every
\(
\pi_1,\pi_2\in\mathcal P(\mX),
\)
we obtain the contraction inequality
\begin{equation}\label{DesI}
\norm{
K(\pi_1)-K(\pi_2)
}_{\mathrm{TV}}
\leq \| K\|
\norm{
\pi_1-\pi_2
}_{\mathrm{TV}}.
\end{equation}
\end{remark}


\section{Doeblin}\label{ap:Doeblin}

\begin{definition}
Let \(K\) be a probability kernel on \(\mX\). Its Dobrushin coefficient is defined by
\[
\delta(K)
:=
\sup_{x,x'\in\mX}
\left\|
K(x,\cdot)-K(x',\cdot)
\right\|_{\mathrm{TV}}.
\]
\end{definition}

The following result is classical; see \cite{Dobrushin56b}. For completeness, and to align the argument with the notation used throughout this paper, we provide a self-contained proof.

\begin{proposition}\label{prop:dobrushin_operator_norm}
Let \(K\) be a probability kernel on \(\mX\). Then
\[
\|K\|
=
\delta(K).
\]
\end{proposition}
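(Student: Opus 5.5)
We prove the two inequalities $\|K\|\ge\delta(K)$ and $\|K\|\le\delta(K)$ separately, working throughout with the normalization $\|m\|_{\mathrm{TV}}=\frac12(\sup_F m(F)-\inf_F m(F))$. By Proposition~\ref{NPN}, for zero-mass measures this equals $\lambda(A)$ for a Hahn positive set $A$, i.e.\ $\sup_F\lambda(F)$.

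\emph{Lower bound.} Fix $x,x'\in\mX$ with $x\neq x'$ and set $\lambda:=\delta_x-\delta_{x'}\in\mL(\mX)$. Taking $F=\{x\}$ gives $\sup_F\lambda(F)=1$, so $\|\lambda\|_{\mathrm{TV}}=1$. Moreover $K(\lambda)=K(x,\cdot)-K(x',\cdot)$. Hence, by the definition \eqref{normK},
\[
\|K\|\ge\|K(x,\cdot)-K(x',\cdot)\|_{\mathrm{TV}}.
\]
Taking the supremum over $x,x'$ gives $\|K\|\ge\delta(K)$. The case $x=x'$ contributes $0$ and is harmless.

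\emph{Upper bound.} Let $\lambda\in\mL(\mX)$ with $\lambda\neq0$. By Proposition~\ref{DX=LX}, write $\lambda=c(\pi_1-\pi_2)$ with $c=\|\lambda\|_{\mathrm{TV}}$, $\pi_1=\lambda^+/c$ and $\pi_2=\lambda^-/c$; these are mutually singular probability measures. Fix any $F\in\mB(\mX)$. Then
\[
K(\lambda)(F)=c\int\!\!\int\bigl(K(x,F)-K(x',F)\bigr)\,\pi_1(\sd x)\,\pi_2(\sd x').
\]
This identity holds because $\int K(x,F)\,\pi_1(\sd x)\cdot\pi_2(\mX)$ and the symmetric term factor as products, using $\pi_1(\mX)=\pi_2(\mX)=1$.

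For each pair $(x,x')$, the signed measure $K(x,\cdot)-K(x',\cdot)$ has zero mass. Proposition~\ref{NPN} therefore gives
\[
K(x,F)-K(x',F)\le\sup_G\bigl(K(x,G)-K(x',G)\bigr)=\|K(x,\cdot)-K(x',\cdot)\|_{\mathrm{TV}}\le\delta(K).
\]
Integrating this bound against $\pi_1\otimes\pi_2$ yields $K(\lambda)(F)\le c\,\delta(K)$ for every $F$.

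Since $K(\lambda)\in\mL(\mX)$ by Remark~\ref{rem:K_lambda}, Proposition~\ref{NPN} gives $\|K(\lambda)\|_{\mathrm{TV}}=\sup_F K(\lambda)(F)\le\delta(K)\,\|\lambda\|_{\mathrm{TV}}$. Dividing by $\|\lambda\|_{\mathrm{TV}}$ and taking the supremum over $\lambda$ gives $\|K\|\le\delta(K)$.

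The main obstacle is the coupling identity in the upper bound. We must check that the product representation is legitimate: the map $(x,x')\mapsto K(x,F)-K(x',F)$ is jointly measurable and bounded, so Fubini applies. It is also essential that the bound is applied to a fixed set $F$ before taking the supremum. This ordering avoids any measurable-selection issue with Hahn sets depending on $(x,x')$.
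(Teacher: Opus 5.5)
Your proof is correct. The lower bound matches the paper's; you compute $\|\delta_x-\delta_{x'}\|_{\mathrm{TV}}=1$ exactly, whereas the paper only uses $\le 1$. For the upper bound you take a genuinely different route.

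\emph{Paper's route.} The paper fixes $F$ and centers the integrand by the constant $m_F:=\inf_{x}K(x,F)$, which is allowed because $\lambda(\mX)=0$. It notes that $0\le K(x,F)-m_F\le\delta(K)$. It then splits $\int (K(x,F)-m_F)\,\lambda(\sd x)$ over a Hahn decomposition $(A,B)$ of $\lambda$, drops the nonpositive $B$-part, and bounds the $A$-part by $\delta(K)\lambda(A)$.

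\emph{Your route.} You write $\lambda=c(\pi_1-\pi_2)$ using the normalized Jordan parts. You then express $K(\lambda)(F)$ as $c$ times an average of the pairwise differences $K(x,F)-K(x',F)$ under the product coupling $\pi_1\otimes\pi_2$. This lets you apply the bound by $\delta(K)$ pointwise in $(x,x')$.

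\emph{What each buys.} The paper's argument is slightly more economical: it uses a single integral against $\lambda$, with no product measure or Fubini step. Your argument shows more transparently why $\delta(K)$ is the operator norm. It exhibits $K(\lambda)$ as a $c$-multiple of a mixture of the images $K(\delta_x-\delta_{x'})$ of the extremal directions. It also adapts directly to other couplings, for example in Wasserstein-type contraction estimates. The minimality $c=\|\lambda\|_{\mathrm{TV}}$, coming from the mutual singularity of $\pi_1$ and $\pi_2$, is what makes your bound sharp. A non-minimal decomposition $\lambda=c'(\pi_1'-\pi_2')$ would only give $c'\delta(K)$ with $c'>\|\lambda\|_{\mathrm{TV}}$.
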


\begin{proof}
 For \(x,x'\in\mX\), let \(\delta_x\) and \(\delta_{x'}\) denote the corresponding Dirac measures. Since
\[
K(x,\cdot)-K(x',\cdot)
=
K(\delta_x-\delta_{x'}),
\]
and \(\delta_x,\delta_{x'}\in\mD(\mX)\), the definition of the induced operator norm gives
\[
\|K(x,\cdot)-K(x',\cdot)\|_{\mathrm{TV}}
\leq
\|K\|\,\|\delta_x-\delta_{x'}\|_{\mathrm{TV}}.
\]
Note that
\(
\|\delta_x-\delta_{x'}\|_{\mathrm{TV}}\leq1.
\)
Therefore,
\[
\|K(x,\cdot)-K(x',\cdot)\|_{\mathrm{TV}}
\leq
\|K\|.
\]
Taking the supremum over \(x,x'\in\mX\), we obtain
\begin{equation}\label{eq:delta_leq_norm}
\delta(K)\leq \|K\|.
\end{equation}
Conversely, let \(\lambda\in\mL(\mX)\), \(\lambda\neq0\).Let \((A,B)\) be a Hahn decomposition of
\(\mX\) w.r.t. \(\lambda\). 
Fix \(F\in\mB(\mX)\), and define
\[
m_F:=\inf_{x\in\mX}K(x,F).
\]
Since \(\lambda(\mX)=0\),
\begin{equation}\label{eq:centered_Klambda}
K(\lambda)(F)
=
\int_{\mX}K(x,F)\,\lambda(\sd x)
=
\int_{\mX}\bigl(K(x,F)-m_F\bigr)\lambda(\sd x).
\end{equation}
Moreover, by the definition of \(\delta(K)\),
\begin{equation}\label{eq:oscillation_bound_delta}
0
\leq
K(x,F)-m_F
\leq
\delta(K),
\qquad \forall x\in\mX.
\end{equation}
Using the Hahn decomposition in \eqref{eq:centered_Klambda}, we obtain
\[
K(\lambda)(F)
=
\int_A \bigl(K(x,F)-m_F\bigr)\lambda(\sd x)
+
\int_B \bigl(K(x,F)-m_F\bigr)\lambda(\sd x).
\]
Since \(B\) is negative for \(\lambda\), and the integrand is nonnegative by
\eqref{eq:oscillation_bound_delta}, the second integral is nonpositive. Hence, using
\eqref{eq:oscillation_bound_delta} and Proposition~\ref{NPN},
\begin{equation*}
K(\lambda)(F)
\leq
\int_A \bigl(K(x,F)-m_F\bigr)\lambda(\sd x)
\leq
\delta(K)\lambda(A)
=
\delta(K)\|\lambda\|_{\mathrm{TV}}.
\end{equation*}
Taking the supremum over \(F\in\mB(\mX)\), and using \eqref{ineqs.sup_lambda(A)} in Proposition~\ref{NPN} for the signed measure \(K(\lambda)\in \mL(\mX)\), we obtain
\[
\|K(\lambda)\|_{\mathrm{TV}}
\leq
\delta(K)\|\lambda\|_{\mathrm{TV}}.
\]
Taking the supremum over all nonzero \(\lambda\in\mL(\mX)\), we get
\begin{equation}\label{eq:norm_leq_delta}
\|K\|\leq \delta(K).
\end{equation}
Combining \eqref{eq:delta_leq_norm} and \eqref{eq:norm_leq_delta} proves the claim.
\end{proof}

\begin{theorem}[Doeblin minorization]\label{thm:Doeblin_epsilon}
Let $(\mX,\mathcal B(\mX))$ be a measurable space, and let
$K$ be a Markov transition kernel on $\mX$.
Assume that there exist $\kappa\in(0,1]$ and a probability measure
$\eta$ such that
\[
K(x,A)\ge \kappa\,\eta(A),
\qquad \forall x\in\mX,\;\forall A\in\mathcal B(\mX).
\]
Then the Dobrushin coefficient
\(
\delta(K)
\)
satisfies
\[
\delta(K)\le 1-\kappa.
\]
\end{theorem}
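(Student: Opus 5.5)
The plan is to bound, for each fixed pair \(x,x'\in\mX\), the total variation distance \(\|K(x,\cdot)-K(x',\cdot)\|_{\mathrm{TV}}\) by \(1-\kappa\), and then take the supremum over \(x,x'\) to bound \(\delta(K)\). The mechanism is to remove the common mass \(\kappa\eta\) from both rows of the kernel. Define the residual kernel
\[
R(x,A):=\frac{K(x,A)-\kappa\,\eta(A)}{1-\kappa},\qquad A\in\mB(\mX),
\]
when \(\kappa<1\). By the minorization hypothesis, \(R(x,\cdot)\) is a nonnegative measure. Its total mass is \((1-\kappa)/(1-\kappa)=1\), so \(R\) is again a Markov kernel. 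Measurability of \(x\mapsto R(x,A)\) is inherited from \(K\).

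With this decomposition, \(K(x,\cdot)=\kappa\eta+(1-\kappa)R(x,\cdot)\). The \(\eta\)-parts then cancel in the difference:
\[
K(x,\cdot)-K(x',\cdot)=(1-\kappa)\bigl(R(x,\cdot)-R(x',\cdot)\bigr).
\]
By homogeneity of the norm \eqref{eq:TVnorm-def}, together with the fact that the TV distance between two probability measures is at most \(1\) (Proposition~\ref{NPN}: the difference lies in \(\mL(\mX)\) and its norm equals its value on a positive Hahn set, which is at most \(1\)), we get
\[
\|K(x,\cdot)-K(x',\cdot)\|_{\mathrm{TV}}\le 1-\kappa.
\]
Taking the supremum over \(x,x'\) gives \(\delta(K)\le1-\kappa\). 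Combined with Proposition~\ref{prop:dobrushin_operator_norm}, this yields \(\|K\|\le 1-\kappa\), which is the form used in Section~\ref{sec5}.

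The edge case \(\kappa=1\) needs separate handling, since \(R\) is then undefined. In that case \(K(x,A)\ge\eta(A)\) for every \(A\). Applying the same inequality to the complement \(A^c\) gives \(K(x,A)\le\eta(A)\), so \(K(x,\cdot)=\eta\) for all \(x\). Every pairwise difference therefore vanishes and \(\delta(K)=0\).

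As an alternative that avoids \(R\) entirely, one can argue directly on a set. Fix \(A\); then
\[
K(x,A)-K(x',A)=\bigl(K(x,A)-\kappa\eta(A)\bigr)-\bigl(K(x',A)-\kappa\eta(A)\bigr)\le K(x,A)-\kappa\eta(A)\le 1-\kappa,
\]
where the middle step drops a nonnegative term by the minorization at \(x'\). Since this holds for every \(A\), taking the supremum and then applying Proposition~\ref{NPN} (the TV norm of a zero-mass measure equals its supremum over sets) gives the same bound.

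I expect no substantive obstacle here. The only care point is matching the paper's normalization of \(\|\cdot\|_{\mathrm{TV}}\) in \eqref{eq:TVnorm-def}, so that the difference of two probability measures has norm at most \(1\) rather than \(2\).
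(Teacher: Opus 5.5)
Your proof is correct and follows the paper's argument exactly (the residual kernel $R$, cancellation of the common $\kappa\eta$ part, and the bound $\|R(x,\cdot)-R(x',\cdot)\|_{\mathrm{TV}}\le 1$), and your complement argument for $\kappa=1$ justifies a step the paper only asserts. One small point in your optional set-wise variant: the last inequality $K(x,A)-\kappa\eta(A)\le 1-\kappa$ is not immediate and needs the minorization on $A^c$, via $K(x,A)-\kappa\eta(A)=(1-\kappa)-\bigl(K(x,A^c)-\kappa\eta(A^c)\bigr)$.
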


\begin{proof}
Fix $x\in\mX$. Define a probability measure $R(x,\cdot)$ by
\[
R(x,A):=\frac{K(x,A)-\kappa\eta(A)}{1-\kappa},
\qquad A\in\mathcal B(\mX),
\]
whenever $\kappa<1$. The case $\kappa=1$ is trivial, since then
$K(x,\cdot)=\eta(\cdot)$ for all $x$ and hence $\delta(K)=0$.
The minorization condition implies that $K(x,A)-\kappa\eta(A)\ge0$,
so $R(x,\cdot)$ is nonnegative. Moreover,
\[
R(x,\mX)
=\frac{K(x,\mX)-\kappa\eta(\mX)}{1-\kappa}
=\frac{1-\kappa}{1-\kappa}=1,
\]
hence $R(x,\cdot)$ is a probability measure. Therefore,
\[
K(x,\cdot)=\kappa\,\eta(\cdot)+(1-\kappa)\,R(x,\cdot).
\]
Let $x,x'\in\mX$. Subtracting the two decompositions yields
\[
K(x,\cdot)-K(x',\cdot)
=(1-\kappa)\bigl(R(x,\cdot)-R(x',\cdot)\bigr).
\]
Taking total variation norms, we obtain
\[
\|K(x,\cdot)-K(x',\cdot)\|_{TV}
=(1-\kappa)\,
\|R(x,\cdot)-R(x',\cdot)\|_{TV}.
\]
Since $R(x,\cdot)$ and $R(x',\cdot)$ are probability measures,
their total variation distance is at most $1$. Hence,
\[
\|K(x,\cdot)-K(x',\cdot)\|_{TV}\le 1-\kappa.
\]
Taking the supremum over $x,x'$ proves the claim.
\end{proof}

\end{document}